 \DeclareMathAlphabet{\mathpzc}{OT1}{pzc}{m}{it}
 \newtheorem{theorem}{Theorem}[section]
 \newtheorem{lemma}[theorem]{Lemma}
 \newtheorem{proposition}[theorem]{Proposition}
 \newtheorem{corollary}[theorem]{Corollary}
 \newtheorem{definition}[theorem]{Definition}
  \theoremstyle{definition}
 \newtheorem{example}[theorem]{Example}
 \newtheorem{remark}[theorem]{Remark}
\newtheorem{free}{}
\newtheorem*{acknowledgements}{Acknowledgements}
\renewenvironment{proof}{\noindent{\it
Proof.}}{\bgroup\hspace{\stretch{1}}$\square$\egroup\medskip\par}
\newcommand{\rmap}{\longrightarrow}
\newcommand{\Rep}{\textrm{Rep}}
\newcommand{\Der}{\textrm{Der}}
\newcommand{\RRep}{\mathbb{R}\textrm{ep}}
\newcommand{\DDer}{\mathbb{D}\textrm{er}}
\begin{document}
\vspace{15cm}
 \title{Representations up to homotopy of Lie algebroids}
\author{Camilo Arias Abad\footnote{Institut f\"ur Mathematik, Universit\"at Z\"urich,
camilo.arias.abad@math.uzh.ch. Partially supported by NWO Grant ``Symmetries and Deformations in Geometry'' and
by SNF Grant 200020-121640/1. } and Marius Crainic\footnote{Mathematics Institute, Utrecht University, m.crainic@uu.nl. Partially supported by NWO Vidi Grant no. 639.032.712.}}
 \maketitle
 \abstract{
We introduce and study the notion of
representation up to homotopy of a Lie algebroid, paying special attention to examples. We
use representations up to homotopy to define the adjoint representation of a Lie algebroid and
show that the resulting cohomology controls the deformations of the structure. The Weil algebra
of a Lie algebroid is defined and shown to coincide with Kalkman's BRST model for equivariant cohomology in the case
of group actions. The relation of this algebra with the integration of Poisson and Dirac structures is explained in \cite{AC2}.

\section{Introduction}

Lie algebroids are infinite dimensional Lie algebras which can be
thought of as  \textit{generalized tangent bundles} associated to
various geometric situations. Apart from Lie algebras and 
tangent bundles, examples of Lie algebroids come from
foliation theory, equivariant geometry, Poisson geometry, riemannian
foliations, quantization, etc. Lie algebroids are the infinitesimal
counterparts of Lie groupoids exactly in the same way in which Lie
algebras are related to Lie groups. Generalizing representations
of Lie algebras as well as vector bundles endowed with a flat connection,
a representation of a Lie algebroid on a vector bundle is 
an action by derivations on the space of sections.

The aim of this paper is to introduce and study a more general notion of
representation: ``representations up to homotopy''.
Our approach is based on
Quillen's superconnections \cite{Q} and fits into the
general theory of structures up to homotopy. The idea is to represent Lie algebroids in
cochain complexes of vector bundles, rather than in vector bundles. 
In a representation up to homotopy the complex is given an action of the Lie algebroid and homotopy operators $\omega_k$
 for $k\geq 2$. The action is required to be flat only up to homotopy, that is,
the curvature of the action may be non-zero, but it is homotopic to
zero via the homotopy $\omega_2$; in turn, $\omega_2$ is required to satisfy the
appropriate coherence condition (Bianchi identity) only up to a homotopy
given by $\omega_3$, and there are higher order coherence conditions. 
Quillen's formalism is used to book-keep the
equations involved.

The advantage of considering these
representations is that they are flexible and general enough to
contain interesting examples which are the correct generalization of
the corresponding notions for Lie algebras. In the setting of representations up to homotopy one can
give a good definition of the adjoint representation of a Lie algebroid.
We will show that, as in the case of Lie algebras, the cohomology associated to the adjoint representation of a Lie algebroid
controls the deformations of the structure (it coincides with the deformation cohomology of \cite{CM}). There are other seemingly ad-hoc
equations that arise from various geometric problems and can now be recognized as cocycle equations
for the cohomology associated to a representation up to homotopy. This is explained in Proposition \ref{proposition k-differentials} for the $k$-differentials
of  \cite{IG}.

Our original motivation for considering representations up to homotopy is
the study of the cohomology of classifying spaces of Lie groupoids.  We will
introduce the Weil algebra $W(A)$ associated to a Lie algebroid $A$. When the Lie algebroid
comes from a Lie groupoid $G$, the Weil algebra serves as a model for the De-Rham
algebra of the total space of the universal principal $G$-bundle $EG\rmap BG$. 
As we shall explain, this generalizes not only the usual Weil algebra of a Lie algebra
but also Kalkman's BRST algebra for equivariant cohomology. Further applications
of the Weil algebra are given in \cite{AC2}, while applications of the notion of representations
up to homotopy to the cohomology of classifying spaces -Bott's spectral sequence- are explained in \cite{AC3}.

A few words about the relationship of this paper with other work that we found in the literature.
Representations up to homotopy can also be described in the language of differential graded
modules over differential graded algebras \cite{HMS}. We emphasize however that we insist on working
with DG-modules which are sections of vector bundles. Our Weil algebra is
isomorphic to the one given in Mehta's thesis \cite{Mehta},
where, using the language of supermanifolds, it appears as 
$(C^{\infty}([-1]T([-1]A)),{\cal{L}}_{d_A}+d)$. Similar descriptions were communicated to us
by D. Roytenberg and P. Severa (unpublished). Some of the equations that appear in the definition of the adjoint
representation were considered by Blaom in \cite{Blaom}. Representations up to homotopy provide examples of
the $Q$-bundles studied by Kotov and Strobl in \cite{KS}.

This paper is organized as follows.
Section \S\ref{Representations and cohomologies} begins by collecting
the definitions of Lie algebroids, representations
and the associated cohomology theories. Then, the connections and curvatures
underlying the adjoint representation are described.

In Section \S\ref{Representations up to homotopy} we give the
definition of representations up to homotopy, introduce several
examples and explain the relationship with extensions
(Proposition \ref{representations and
extensions}).
We define the adjoint representation
and point out that the associated cohomology is isomorphic to the deformation
cohomology of \cite{CM} (Theorem \ref{The adjoint representation}).
We also explain the relation between the adjoint representation and the
first jet algebroid (Proposition \ref{cor-first-jet}).
 In the case of the tangent bundle we describe the associated
parallel transport (Proposition \ref{parallel}).

In Section \S\ref{Operations, cohomology and the derived category} we discuss the main
operations on the category of representations up to homotopy and then we have a closer look
at the resulting derived category. In particular, we prove that our notion has the usual properties that
one expects for ``structures up to homotopy''  (Proposition
\ref{qi's} and Theorem \ref{rep-regular}). Various examples are presented.

In Section \S\ref{The Weil algebra} we introduce 
the Weil algebra of a Lie algebroid, generalizing the
standard Weil algebra of a Lie algebra. We show that, when applying
this construction to the Lie algebroid associated to a Lie group
action on a manifold, one obtains Kalkman's BRST algebra for
equivariant cohomology
(Proposition \ref{Weil and BRST}).

In order to fix our conventions, we recall
some basic properties of graded algebra and complexes of vector bundles in the appendix.

\begin{acknowledgements}
We would like to thank the referee for carefully reading a previous version of this manuscript and suggesting many improvements. We would
also like to thank Dmitry Roytenberg for several discussions we had and James Stasheff for valuable comments. 
\end{acknowledgements}

\section{Preliminaries}\label{Representations and cohomologies}

\subsection{Representations and cohomology}


Here we review some standard facts about Lie algebroids,
representations and cohomology. We also make a few
 general comments regarding the notion of adjoint
representation for Lie algebroids. Throughout this paper, $A$ denotes a Lie algebroid over a fixed base
manifold $M$. As a general reference for algebroids, we use
\cite{McK}.

\begin{definition}
A Lie algebroid over $M$ is a vector bundle $\pi:A \rightarrow M$
together with a bundle map $\rho:A \rightarrow TM$, called the
anchor map, and a Lie bracket on the space $\Gamma(A)$ of sections
of $A$ satisfying the Leibniz identity:
$$[\alpha,f\beta]=f[\alpha,\beta]+\rho(\alpha)(f)\beta,$$
for every $\alpha,\beta \in \Gamma(A)$ and $f \in C^{\infty}(M)$.
\end{definition}

Given an algebroid $A$, there is an associated De-Rham complex
$\Omega(A)= \Gamma(\Lambda A^*)$, with the De-Rham operator given by
the Koszul formula
\begin{eqnarray*}
d_A \omega(\alpha_1, \dots ,\alpha_{k+1})&=& \sum_{i<j}(-1)^{i+j} \omega([\alpha_i,\alpha_j],\dots,\hat{\alpha_i},\dots,\hat{\alpha_j},\dots,\alpha_{k+1})\\
&&+\sum_i(-1)^{i+1}L_{\rho(\alpha_i)}\omega(\alpha_1,\dots,\hat{\alpha_i},\dots,\alpha_{k+1}),
\end{eqnarray*}
where $L_{X}(f)= X(f)$ is the Lie derivative along vector fields.
The operator $d_A$ is a differential ($d_{A}^{2}= 0$) and satisfies
the derivation rule
\[ d_A(\omega\eta)= d_A(\omega)\eta+ (-1)^p\omega d_A(\eta),\]
for all $\omega\in \Omega^p(A), \eta\in \Omega^q(A)$.

\begin{definition}
Let $A$ be a Lie algebroid over $M$. An $A$-connection on a vector
bundle $E$ over $M$ is an $\mathbb{R}$-bilinear map $\nabla:\Gamma(A)\times
\Gamma(E) \rightarrow \Gamma(E)$, $(\alpha,S) \mapsto
\nabla_{\alpha}(S)$ such that:
\[  \nabla_{f\alpha}(s) = f\nabla_{\alpha}(s),\ \ \nabla _{\alpha}(fs)=f\nabla_{\alpha}(s)+L_{\rho(\alpha)}(f)(s)\]
for all $f\in C^{\infty}(M)$, $s\in \Gamma(E)$ and $ \alpha \in
\Gamma(A)$. The $A$-curvature of $\nabla$ is the tensor given by
\begin{equation*}
R_{\nabla}(\alpha, \beta)(s):= \nabla_{\alpha}\nabla_{\beta}(s)-
\nabla_{\beta}\nabla_{\alpha}(s)- \nabla_{[\alpha, \beta]}(s)
\end{equation*}
 for $\alpha,\beta \in \Gamma(A)$, $s \in \Gamma(E)$.
The $A$-connection $\nabla$ is called flat if $R_{\nabla}= 0$. A
representation of $A$ is a vector bundle $E$ together with a flat
$A$-connection $\nabla$ on $E$.
\end{definition}

Given any $A$-connection $\nabla$ on $E$, the space of $E$-valued
$A$-differential forms, $\Omega(A; E)= \Gamma(\Lambda A^*\otimes E)$
has an induced operator $d_{\nabla}$ given by the Koszul formula
\begin{eqnarray*}
d_{\nabla} \omega(\alpha_1, \dots ,\alpha_{k+1})&=& \sum_{i<j}(-1)^{i+j} \omega([\alpha_i,\alpha_j],
\dots,\hat{\alpha_i},\dots,\hat{\alpha_j},\dots,\alpha_{k+1})+\\
&+&\sum_i(-1)^{i+1}\nabla_{\alpha_i}\omega(\alpha_1,\dots,\hat{\alpha_i},\dots,\alpha_{k+1}).
\end{eqnarray*}
In general, $d_{\nabla}$ satisfies the derivation rule
\[ d_{\nabla}(\omega\eta)= d_A(\omega)\eta+ (-1)^p\omega d_{\nabla}(\eta),\]
and squares to zero if and only if $\nabla$ is flat.

\begin{proposition}\label{connections-derivations} Given a Lie algebroid $A$ and
a vector bundle $E$ over $M$, there is a 1-1 correspondence
between $A$-connections $\nabla$ on $E$ and degree $+1$ operators
$d_{\nabla}$ on $\Omega(A; E)$ which satisfy the derivation
rule. Moreover, $(E, \nabla)$ is a representation if and only if
$d_{\nabla}^2= 0$.
\end{proposition}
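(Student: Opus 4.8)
The plan is to establish the correspondence in both directions and then verify that the flatness condition matches the condition $d_\nabla^2 = 0$. First I would observe that one direction is already essentially given: the Koszul formula above associates to any $A$-connection $\nabla$ a degree $+1$ operator $d_\nabla$ on $\Omega(A; E)$, and a direct computation (using the Leibnitz identity for the bracket and the defining properties of $\nabla$) shows it satisfies the derivation rule $d_\nabla(\omega \eta) = d_A(\omega)\eta + (-1)^p \omega d_\nabla(\eta)$ for $\omega \in \Omega^p(A)$. So the content is really the reverse direction and the injectivity of $\nabla \mapsto d_\nabla$.

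For the reverse direction, suppose $D$ is any degree $+1$ operator on $\Omega(A;E)$ satisfying the derivation identity $D(\omega \eta) = d_A(\omega)\eta + (-1)^p \omega D(\eta)$. The key point is that $\Omega^0(A;E) = \Gamma(E)$ and that the derivation rule forces $D$ to be a first-order differential operator whose restriction to $\Gamma(E)$ determines it completely, because $\Omega(A;E)$ is generated over $\Omega^0(A) = C^\infty(M)$ by $\Omega^1(A)$ and $\Gamma(E)$. Concretely, for $s \in \Gamma(E)$, $Ds \in \Omega^1(A;E) = \Gamma(A^* \otimes E)$, so I can define $\nabla_\alpha(s) := \langle \alpha, Ds\rangle$ for $\alpha \in \Gamma(A)$, i.e. contraction of $Ds$ with $\alpha$. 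One then checks the two axioms of an $A$-connection: $C^\infty(M)$-linearity in $\alpha$ is immediate since $Ds$ is a bundle-valued form, and the Leibnitz rule $\nabla_\alpha(fs) = f\nabla_\alpha(s) + L_{\rho(\alpha)}(f) s$ follows from applying the derivation identity to $D(fs) = d_A(f)s + f\, Ds$ and contracting with $\alpha$, using that $\langle \alpha, d_A f\rangle = \rho(\alpha)(f)$. Having produced $\nabla$ from $D$, I would show $D = d_\nabla$: both are degree $+1$ derivations over $d_A$ agreeing on the generating set $\Omega^0(A;E) = \Gamma(E)$ (by construction) and they automatically agree on $\Omega^1(A)\cdot\Gamma(E)$-type elements and hence everywhere; this simultaneously gives surjectivity of $\nabla \mapsto d_\nabla$ and, reading the argument backwards, injectivity, since $d_\nabla$ determines $\nabla$ via the same contraction formula.

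Finally, for the statement about flatness, I would compute $d_\nabla^2$ on $\Gamma(E) = \Omega^0(A;E)$ directly from the Koszul formula: for $s \in \Gamma(E)$ one gets $d_\nabla^2 s(\alpha,\beta) = \nabla_\alpha \nabla_\beta(s) - \nabla_\beta \nabla_\alpha(s) - \nabla_{[\alpha,\beta]}(s) = R_\nabla(\alpha,\beta)(s)$, so $d_\nabla^2 = 0$ on $\Omega^0$ is exactly flatness. For the converse, since $d_\nabla$ is a derivation over $d_A$ and $d_A^2 = 0$, the operator $d_\nabla^2$ is $\Omega(A)$-linear (a standard computation: $d_\nabla^2(\omega\eta) = \omega\, d_\nabla^2(\eta)$ for $\omega \in \Omega(A)$), hence a tensorial operator determined by its value on $\Omega^0(A;E)$; therefore $R_\nabla = 0$ implies $d_\nabla^2 = 0$ everywhere. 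The main obstacle, such as it is, is the bookkeeping in verifying that an abstract derivation $D$ over $d_A$ is determined by its restriction to $\Gamma(E)$ — this rests on the fact that local sections of $E$ together with functions and $1$-forms on $A$ generate $\Omega(A;E)$, and on a locality/partition-of-unity argument to reduce to the local picture; once that is in place, the rest is the routine identification of the Koszul formula with the contraction construction.
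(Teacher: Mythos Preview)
Your proof is correct. The paper actually states this proposition without proof, treating it as a standard fact; your argument supplies exactly the expected details (restriction of a degree~$+1$ derivation to $\Gamma(E)$, recovery of $\nabla$ by contraction, the observation that $d_{\nabla}^2$ is $\Omega(A)$-linear and equals $R_{\nabla}$ on $\Gamma(E)$). The only remark is that the generating-set step is slightly cleaner than you suggest: $\Omega(A;E)$ is generated as an $\Omega(A)$-module by $\Gamma(E)$ (locally $\omega\otimes s$ with $\omega\in\Omega(A)$, $s\in\Gamma(E)$), and locality of $D$ follows directly from the derivation rule, so no separate reference to $\Omega^1(A)$ is needed.
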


In a more algebraic language, every Lie algebroid $A$ has an
associated DG algebra $(\Omega(A), d_A)$, and every representation
$E$ of $A$ gives a DG module over this DG algebra.

\begin{definition} Given a representation $E= (E, \nabla)$ of $A$, the cohomology of $A$ with coefficients in $E$,
denoted $H^{\bullet}(A;E)$, is the cohomology of the complex
$(\Omega(A;E),d_{\nabla})$. When $E$ is the trivial representation
(the trivial line bundle with $\nabla_{\alpha}= L_{\rho(\alpha)}$),
we write $H^{\bullet}(A)$.
\end{definition}

\begin{example}\rm \ \label{zeroth-examples}
In the extreme case where $A$ is $TM$, representations are
flat vector bundles over $M$, while the associated cohomology is
the usual cohomology of $M$ with local coefficient given by the
flat sections of the vector bundle. At the other extreme, when $A= \mathfrak{g}$ is a Lie algebra,
one recovers the standard notion of representation of Lie algebras,
and Lie algebra cohomology. 
For a foliation $\mathcal{F}$ on $M$, viewed as an involutive sub-bundle
of $TM$, the Lie algebroid cohomology becomes the well-known
foliated cohomology (see e.g. \cite{KT,Alv}). Central to foliation theory
is the Bott  connection \cite{Bott0} on the normal bundle $\nu= TM/\mathcal{F}$,
\[ \nabla_{V}(X\  \textrm{mod}\,\mathcal{F})= [V, X]\ \textrm{mod}\,\mathcal{F} \ \ (V\in \Gamma(\mathcal{F}))\]
which is the linearized version of the notion of holonomy. In our language, $\nu$ is a
representation of $\mathcal{F}$. 

More generally, for any regular Lie algebroid $A$ (regular in the sense that $\rho: A \to TM$ has constant rank),
$A$ has two canonical representations. They are the kernel of
$\rho$, denoted $\mathfrak{g}(A)$, with the $A$-connection
\[ \nabla^{\textrm{adj}}_{\alpha}(\beta)= [\alpha, \beta],\]
and the normal bundle $\nu(A)= TM/\rho(A)$ of the foliation induced
by $A$, with the connection
\[ \nabla^{\textrm{adj}}_{\alpha}(\overline{X})= \overline{[\rho(\alpha), X]},\]
where $\overline{X}= X\  \textrm{mod}\,\rho(A)$. 
\end{example}

\subsection{Deformation cohomology}\label{subsection:deformations}

The deformation cohomology of $A$ arises in the study of the
deformations of the Lie algebroid structure \cite{CM}. This
cohomology cannot, in general, be realized as the cohomology
associated to a representation. The deformation complex
$(C^{\bullet}_{\textrm{def}}(A), \delta)$ is defined as follows. In
degree $k$, it consists of pairs $(c,\sigma_c)$ where $c$ is an antisymmetric, $\mathbb{R}$-multilinear
map
\[ c: \underbrace{\Gamma(A) \times \cdots \times
\Gamma(A)}_{k-\mathrm{times}} \rightarrow \Gamma(A)
\]
and $\sigma_c$ is an antisymmetric, $C^{\infty}(M)$-multilinear
map
\begin{equation*}
\sigma_{c}: \underbrace{\Gamma(A) \times \cdots \times
\Gamma(A)}_{(k-1)-\mathrm{times}} \rightarrow \Gamma(TM),
\end{equation*}
which is the symbol of $c$ in the sense that:
\begin{equation*}
c(\alpha_1, \dots, f \alpha_{k})=fc(\alpha_1, \dots,\alpha_{k})+
L_{\sigma_{c}(\alpha_1, \dots,\alpha_{k-1})}(f)\alpha_{k},
\end{equation*}
for any function $f \in C^{\infty}(M)$ and sections $\alpha_i \in
\Gamma(A)$. The differential
\begin{equation*}
\delta: C^k_{\textrm{def}}(A) \rightarrow C^{k+1}_{\textrm{def}}(A)
\end{equation*}
associates to $(c, \sigma_c)$ the pair $(\delta(c),\sigma_{\delta(c)})$ where:
\begin{eqnarray*}
\delta(c)(\alpha_1,\dots,\alpha_{k+1})&=&\sum_{i<j} (-1)^{i+j} c([\alpha_i,\alpha_j],\alpha_1,
\dots, \hat{\alpha_i}, \dots, \hat{\alpha_j}, \dots,\alpha_{k+1})\\
&&+\sum_{i=1}^{k+1} (-1)^{i+1}[ \alpha_i, c(\alpha_1, \dots,
\hat{\alpha_i}, \dots,\alpha_{k+1})],
\end{eqnarray*}

\[\sigma_{\delta(c)}=\delta(\sigma_c)+(-1)^{k+1}\rho \circ c,\]
and
\begin{eqnarray*}
\delta(\sigma_c)(\alpha_1,\dots,\alpha_{k})&=&\sum_{i<j} (-1)^{i+j} \sigma_c([\alpha_i,\alpha_j],\alpha_1,
\dots, \hat{\alpha_i}, \dots, \hat{\alpha_j}, \dots,\alpha_{k})\\
&&+\sum_{i=1}^{k} (-1)^{i+1}[ \rho(\alpha_i), c(\alpha_1, \dots,
\hat{\alpha_i}, \dots,\alpha_{k})].
\end{eqnarray*}
\begin{definition} The deformation cohomology of the Lie algebroid $A$, denoted
$H^{\bullet}_{\mathrm{def}}(A)$, is defined as the cohomology of
the cochain complex $(C^{\bullet}_{\mathrm{def}}(A), \delta)$.
\end{definition}

\begin{example}\rm \ \label{first-examples}
When $A= TM$, the deformation cohomology is
trivial (cf. \cite{CM}, Corollary 2). When $A= \mathfrak{g}$ is a Lie algebra, 
the deformation cohomology
is isomorphic to $H^{\bullet}(\mathfrak{g}; \mathfrak{g})$, the
cohomology with coefficients in the adjoint representation. This is
related to the fact that deformations of the Lie algebra
$\mathfrak{g}$ are controlled by $H^2(\mathfrak{g}; \mathfrak{g})$.

In the case of a foliation $\mathcal{F}$, $H^{\bullet}_{\textrm{def}}(\mathcal{F})$
is isomorphic to $H^{\bullet -1}(\mathcal{F}; \nu)$, the cohomology with coefficients in the Bott representation
(\cite{CM}, Proposition $4$). This was already explained in the work of 
Heitsch \cite{Hei} on deformations of foliations where he shows that 
such deformations are controlled
by $H^1(\mathcal{F}; \nu)$. Due to the analogy with Lie algebras, it seems natural to
declare $\nu[-1]$ -the graded vector bundle which is
$\nu$ concentrated in degree one- 
as the adjoint representation of
$\mathcal{F}$.
\end{example}

\begin{remark}\rm \ \label{adj-rep-comments}
The notion of adjoint representation will be properly defined in
Subsection \ref{subsection: The adjoint representation}. 
For now, we would like to explain why it has to be defined in the setting of representations up to homotopy.
With the examples of Lie algebras and foliations in mind, it is tempting 
to consider the natural representations of $A$ (the $\mathfrak{g}(A)$ and $\nu(A)$
of Example \ref{zeroth-examples}) and to define the adjoint representation of $A$ as
\begin{equation}
\label{question} \textrm{ad}= \mathfrak{g}(A)\oplus \nu(A)[-1],
\end{equation}
a (graded) representation with  $\mathfrak{g}(A)$ in degree zero and $\nu(A)$ in degree one. 
Even under the assumption that $A$ is regular (so that the bundles involved are smooth),
the behavior of the deformation cohomology shows that one should be more careful. Indeed, based on
the examples of Lie algebras, one expects the cohomology associated to the adjoint representation
to coincide with the deformation cohomology.
While there is a long exact sequence (\cite{CM},Theorem $3$ )  
\begin{equation}\label{les-def}
\cdots \rightarrow H^n(A;\mathfrak{g}(A))\rightarrow
H^{n}_{\textrm{def}}(A)\rightarrow H^{n-1}(A;\nu(A))
\stackrel{\delta}{\rightarrow} H^{n+1}(A;\mathfrak{g}(A))\rightarrow
\cdots,
\end{equation}
it is not difficult to find examples for which the connecting map $\delta$
is non-zero. As we shall see,
definition (\ref{question}) can be made correct provided we endow the right hand side 
with the structure of a representation up to homotopy.

The situation is  worse in the non-regular case, when
the graded direct sum $\mathfrak{g}(A)\oplus \nu(A)[1]$ is non longer smooth. 
One can overcome this by interpreting the direct sum as the cohomology of a cochain complex of vector
bundles, concentrated in two degrees:
\begin{equation}
\label{zero-adjoint} A\stackrel{\rho}{\to} TM .
\end{equation}
We will call this {\it the adjoint complex of $A$}. The idea of
using this complex in order to make sense of the adjoint
representation appeared already in \cite{ELW}, and is also present
in \cite{CF2}. However, the presence of
the extra-structure of a representation up to homotopy had been overlooked.
\end{remark}

\subsection{Basic connections and the basic curvature}
\label{Basic connections and the basic curvature}

Keeping in mind our discussion on what the adjoint representation
should be, we want to \textit{extend}
the canonical flat $A$-connections $\nabla^{\textrm{adj}}$ (from
$\mathfrak{g}(A)$ and $\nu(A)$) to $A$ and $TM$ or, even better, to
the adjoint complex (\ref{zero-adjoint}). This construction already
appeared in the theory of secondary characteristic classes
\cite{CF2} and was also used in \cite{CF1}.

\begin{definition} Given a Lie algebroid $A$ over $M$ and a connection $\nabla$ on the vector bundle
$A$, we define
\begin{enumerate}
\item The basic $A$-connection induced by $\nabla$ on $A$:
\[\nabla^{\textrm{bas}}_{\alpha}(\beta)= \nabla_{\rho(\beta)}(\alpha)+ [\alpha, \beta].\]
\item The basic $A$-connection induced by $\nabla$ on $TM$:
\[\nabla^{\textrm{bas}}_{\alpha}(X)= \rho(\nabla_{X}(\alpha))+ [\rho(\alpha), X].\]
\end{enumerate}
\end{definition}

Note that $\nabla^{\textrm{bas}}\circ \rho= \rho \circ
\nabla^{\textrm{bas}}$, i.e. $\nabla^{\textrm{bas}}$ is an
$A$-connection on the adjoint complex (\ref{zero-adjoint}). On the
other hand, the existence of a connection $\nabla$ such that
$\nabla^{\textrm{bas}}$ is flat is a very restrictive condition on
$A$. It turns out that the curvature of $\nabla^{\textrm{bas}}$
hides behind a more interesting tensor- and that is what we will
call the basic curvature of $\nabla$.

\begin{definition} Given a Lie algebroid $A$ over $M$ and a connection $\nabla$ on the vector bundle
$A$, we define the basic curvature of $\nabla$, as the tensor
\[ R_{\nabla}^{\textrm{bas}}\in \Omega^2(A; \textrm{Hom}(TM, A))\]
given by
\[ R_{\nabla}^{\textrm{bas}}(\alpha,\beta)(X):=\nabla_X([\alpha,\beta])-[\nabla_X(\alpha),\beta]-[\alpha,\nabla_X(\beta)]-\nabla_{\nabla_{\beta}^{\textrm{bas}}X}(\alpha)
+\nabla_{\nabla_{\alpha}^{\textrm{bas}}X}(\beta),
\]
where $\alpha, \beta $ are sections of $A$ and $X$ is a vector
field on $M$.
\end{definition}

This tensor appears when one looks at the curvatures of the
$A$-connections $\nabla^{\textrm{bas}}$. One may think of
$R_{\nabla}^{\textrm{bas}}$ as the expression
$\nabla_X([\alpha,\beta])-[\nabla_X(\alpha),\beta]-[\alpha,\nabla_X(\beta)]$
which measures the derivation property of $\nabla$ with respect to
$[\cdot, \cdot]$, \textit{corrected} so that it becomes
$C^{\infty}(M)$-linear on all arguments.

\begin{proposition}\label{properties-basic} For any connection $\nabla$ on $A$, one has:
\begin{enumerate}
\item The curvature of the $A$-connection $\nabla^{\textrm{bas}}$
on $A$ equals to $-\rho\circ R_{\nabla}^{\textrm{bas}}$, while the
curvature of the $A$-connection $\nabla^{\textrm{bas}}$ on $TM$
equals to $-R_{\nabla}^{\textrm{bas}}\circ \rho$.
\item $R^{\textrm{bas}}_{\nabla}$ is closed with respect to $\nabla^{\textrm{bas}}$ i.e.
$d_{\nabla^{\textrm{bas}}}(R^{\textrm{bas}}_{\nabla})= 0$.
\end{enumerate}
\end{proposition}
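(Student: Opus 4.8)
The plan is to prove both items by a direct computation, organising the bookkeeping around the definition of the basic curvature so that the arguments remain $C^\infty(M)$-linear and one can test on sections and vector fields without worrying about Leibniz corrections.

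For item (1), I would start from the definition of the curvature of the $A$-connection $\nabla^{\textrm{bas}}$ on $A$, namely
\[
R_{\nabla^{\textrm{bas}}}(\alpha,\beta)(\gamma)= \nabla^{\textrm{bas}}_{\alpha}\nabla^{\textrm{bas}}_{\beta}(\gamma)-\nabla^{\textrm{bas}}_{\beta}\nabla^{\textrm{bas}}_{\alpha}(\gamma)-\nabla^{\textrm{bas}}_{[\alpha,\beta]}(\gamma),
\]
and expand each term using $\nabla^{\textrm{bas}}_{\alpha}(\gamma)=\nabla_{\rho(\gamma)}(\alpha)+[\alpha,\gamma]$ together with the formula for $\nabla^{\textrm{bas}}$ on $TM$ (which governs how $\rho(\gamma)$ transforms). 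The Jacobi identity for $[\cdot,\cdot]$ will take care of the purely bracket terms, and the remaining terms should reorganise precisely into $-\rho\big(R^{\textrm{bas}}_{\nabla}(\alpha,\beta)(\rho(\gamma))\big)$; here one uses $\rho([\alpha,\gamma])=[\rho(\alpha),\rho(\gamma)]$ and $\rho\circ\nabla^{\textrm{bas}}=\nabla^{\textrm{bas}}\circ\rho$ (the relation noted right after the definition of the basic connections). The computation for the curvature of $\nabla^{\textrm{bas}}$ on $TM$ is entirely parallel: expand $R_{\nabla^{\textrm{bas}}}(\alpha,\beta)(X)$ with $\nabla^{\textrm{bas}}_{\alpha}(X)=\rho(\nabla_X(\alpha))+[\rho(\alpha),X]$, apply the Jacobi identity to the bracket-only part, and recognise the residual terms as $-R^{\textrm{bas}}_{\nabla}(\alpha,\beta)(\rho(X))$. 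A useful sanity check along the way is that both sides are manifestly tensorial in all arguments, so it suffices to verify the identity on (local) frames.

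For item (2), I would write out $d_{\nabla^{\textrm{bas}}}(R^{\textrm{bas}}_{\nabla})$ as a $3$-form on $A$ with values in $\textrm{Hom}(TM,A)$ using the Koszul formula for $d_{\nabla^{\textrm{bas}}}$, evaluated on $(\alpha_1,\alpha_2,\alpha_3)$ and tested against a vector field $X$. One organises the result into two groups: the terms coming from the ``bracket'' part of the Koszul formula, i.e. $\sum_{i<j}(-1)^{i+j}R^{\textrm{bas}}([\alpha_i,\alpha_j],\alpha_k)(X)$, and the terms coming from the connection part, $\sum_i(-1)^{i+1}\nabla^{\textrm{bas}}_{\alpha_i}\big(R^{\textrm{bas}}(\cdots)(X)\big)$, where $\nabla^{\textrm{bas}}$ acts both on the $A$-value and (through the $\textrm{Hom}$) on the $TM$-slot $X$. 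I expect this identity to follow from two inputs: the Jacobi/Bianchi-type identity for the ordinary curvature $R_\nabla$ of the connection $\nabla$ on $A$ (i.e. $d_\nabla R_\nabla=0$ as an operator identity), and the first part of this proposition, which lets one trade curvatures of $\nabla^{\textrm{bas}}$ for $\rho\circ R^{\textrm{bas}}$ and $R^{\textrm{bas}}\circ\rho$. Alternatively, and perhaps more cleanly, one can deduce (2) from (1) abstractly: $R^{\textrm{bas}}_\nabla$ is, up to $\rho$, the curvature of $\nabla^{\textrm{bas}}$ on the adjoint complex $A\xrightarrow{\rho}TM$, and the Bianchi identity $d_{\nabla^{\textrm{bas}}}(R_{\nabla^{\textrm{bas}}})=0$ holds for any connection on a complex; one then pushes this through the (injective-enough, in the relevant degrees, or simply functorial) relation with $R^{\textrm{bas}}$.

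The main obstacle I anticipate is purely combinatorial: keeping the signs and the many Leibniz-type correction terms straight in the expansion for item (1), and, for item (2), correctly handling the action of $\nabla^{\textrm{bas}}$ on the $\textrm{Hom}(TM,A)$-coefficients (it differentiates the $TM$-input contravariantly). The conceptual content is light — Jacobi for the bracket plus the defining compatibility $\rho\circ\nabla^{\textrm{bas}}=\nabla^{\textrm{bas}}\circ\rho$ — so the real work is an honest but unenlightening term-by-term verification, best done on a local frame of $A$ with $X$ a coordinate vector field.
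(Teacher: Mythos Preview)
Your overall strategy---direct expansion using the definitions plus Jacobi---is exactly what the paper does, and for item (2) the paper simply says ``a similar computation that we will omit,'' so your plan of a term-by-term verification is on target. Two points are worth flagging.

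First, your target expressions in item (1) do not type-check: $R^{\textrm{bas}}_{\nabla}(\alpha,\beta)$ is a map $TM\to A$, so $\rho\bigl(R^{\textrm{bas}}_{\nabla}(\alpha,\beta)(\rho(\gamma))\bigr)$ lands in $TM$, not $A$, and in the $TM$-case the expression $\rho(X)$ is meaningless since $X$ is already a vector field. What the computation actually yields is
\[
R_{\nabla^{\textrm{bas}}}(\alpha,\beta)(\gamma)=-R^{\textrm{bas}}_{\nabla}(\alpha,\beta)\bigl(\rho(\gamma)\bigr),
\qquad
R_{\nabla^{\textrm{bas}}}(\alpha,\beta)(X)=-\rho\bigl(R^{\textrm{bas}}_{\nabla}(\alpha,\beta)(X)\bigr),
\]
and this is precisely how the paper argues: it plugs $X=\rho(\gamma)$ (respectively, applies $\rho$ to the output) directly into the definition of $R^{\textrm{bas}}_{\nabla}$ and collapses the five terms into $\nabla^{\textrm{bas}}_{[\alpha,\beta]}(\gamma)-\nabla^{\textrm{bas}}_{\alpha}\nabla^{\textrm{bas}}_{\beta}(\gamma)+\nabla^{\textrm{bas}}_{\beta}\nabla^{\textrm{bas}}_{\alpha}(\gamma)$. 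Starting from this side is more economical than expanding $R_{\nabla^{\textrm{bas}}}$ from scratch, though both routes use Jacobi in the same place.

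Second, your alternative route for (2)---deducing $d_{\nabla^{\textrm{bas}}}(R^{\textrm{bas}}_{\nabla})=0$ from the ordinary Bianchi identity for $\nabla^{\textrm{bas}}$ on the adjoint complex---does not go through as stated. Bianchi gives you $d_{\nabla^{\textrm{bas}}}(R^{\textrm{bas}}_{\nabla})\circ\rho=0$ and $\rho\circ d_{\nabla^{\textrm{bas}}}(R^{\textrm{bas}}_{\nabla})=0$, but $\rho$ is neither injective nor surjective in general, so you cannot conclude the vanishing of $d_{\nabla^{\textrm{bas}}}(R^{\textrm{bas}}_{\nabla})$ itself. Stick with the direct computation.
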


\begin{proof} For $\alpha, \beta, \gamma\in \Gamma(A)$,
\begin{eqnarray*}
R_{\nabla}^{\textrm{bas}}(\alpha,
\beta)\rho(\gamma)&=&\nabla_{\rho(\gamma)}([\alpha,\beta])
-[\nabla_{\rho(\gamma)}(\alpha),\beta]
-[\alpha,\nabla_{\rho(\gamma)}(\beta)]\\
&&-\nabla_{\nabla_{\beta}\rho(\gamma)}(\alpha)
+\nabla_{\nabla_{\alpha}\rho(\gamma)}(\beta)\\
&=&\nabla_{[\alpha,\beta]}(\gamma)-\nabla_{\alpha}(\nabla_{\beta}(\gamma))+\nabla_{\beta}(\nabla_{\alpha}(\gamma))\\
&=&-R_{\nabla^{\textrm{bas}}}(\alpha,\beta)(\gamma)
\end{eqnarray*}
On the other hand, if we evaluate at a vector field $X$ the
computation becomes:
\begin{eqnarray*}
\rho (R_{\nabla}^{\textrm{bas}}(\alpha, \beta)X)&=&
\rho(\nabla_X([\alpha,\beta])-[\nabla_X(\alpha),\beta]-[\alpha,\nabla_X(\beta)]\\
&&-\nabla_{\nabla_{\beta}X}(\alpha)
+\nabla_{\nabla_{\alpha}X}(\beta))\\
&=&\rho(\nabla_X([\alpha,\beta]))+[\rho([\alpha,\beta]),X]\\
&&+[[\rho(\beta),X],\rho(\alpha)]-\rho([\alpha,\nabla_X(\beta)])-\rho(\nabla_{\nabla_{\beta}X}(\alpha))\\
&&+[\rho(\beta),[\rho(\alpha),X]]+\rho(\nabla_{\nabla_{\alpha}X}(\beta))-\rho([\nabla_X(\alpha),\beta])\\
&=&\nabla_{[\alpha,\beta]}(X)- \nabla_{\alpha}(\nabla_{\beta}(X))+\nabla_{\beta}(\nabla_{\alpha}(X))\\
&=&-R_{\nabla^{\textrm{bas}}}(\alpha,\beta)(X)
\end{eqnarray*}
The proof of the second part is a similar computation that we will
omit.
\end{proof}

The following results indicate the geometric meaning of
the  basic curvature $R_{\nabla}^{\textrm{bas}}$. The first one
refers to the characterization of Lie algebroids which arise from
Lie algebra actions.

\begin{proposition}
A Lie algebroid $A$ over a simply connected manifold $M$ is the
algebroid associated to a Lie algebra action on $M$ if and only if it
admits a flat connection $\nabla$ whose induced  basic curvature
$R_{\nabla}^{\textrm{bas}}$ vanishes.
\end{proposition}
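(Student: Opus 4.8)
The plan is to establish both implications directly, using the interpretation of the basic curvature from the preceding proposition, namely that $R_{\nabla}^{\textrm{bas}}(\alpha, \beta)= j^{\nabla}([\alpha, \beta])- [j^{\nabla}(\alpha), j^{\nabla}(\beta)]$, together with Proposition \ref{properties-basic}, which tells us that flatness of $\nabla^{\textrm{bas}}$ on $A$ and on $TM$ follows once $\nabla$ is flat and $R_{\nabla}^{\textrm{bas}}= 0$ (since the curvatures of $\nabla^{\textrm{bas}}$ are $-\rho\circ R_{\nabla}^{\textrm{bas}}$ and $-R_{\nabla}^{\textrm{bas}}\circ \rho$). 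Recall that a Lie algebroid is an \emph{action algebroid} $\mathfrak{g}\ltimes M$ precisely when there is a Lie algebra $\mathfrak{g}$, a bundle isomorphism $A\cong \mathfrak{g}\times M$ identifying $\Gamma(A)$ with $C^{\infty}(M)\otimes \mathfrak{g}$, under which the bracket and anchor are the ones induced by a Lie algebra action $\mathfrak{g}\to \Gamma(TM)$. So the statement is really a rigidity/integration statement for the flat connection $\nabla$.

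For the ``only if'' direction, suppose $A= \mathfrak{g}\ltimes M$. Then the trivialization $A\cong \mathfrak{g}\times M$ gives the obvious flat connection $\nabla$ for which the constant sections (elements of $\mathfrak{g}$, viewed as sections of $A$) are parallel. One then checks that for $v, w\in \mathfrak{g}$ regarded as (parallel) sections and any vector field $X$, every term in $R_{\nabla}^{\textrm{bas}}(v,w)(X)$ vanishes: $\nabla_X([v,w])= 0$ because $[v,w]$ is again a constant section, $[\nabla_X v, w]= [\alpha, \nabla_X w]= 0$ for the same reason, and $\nabla^{\textrm{bas}}_{v}X$ is handled by the defining formula. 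Since $R_{\nabla}^{\textrm{bas}}$ is $C^{\infty}(M)$-linear (tensorial) in $\alpha$ and $\beta$, vanishing on a generating set of parallel sections forces $R_{\nabla}^{\textrm{bas}}= 0$. This direction is routine.

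For the ``if'' direction — which is the substantive one — assume $M$ is simply connected and $\nabla$ is flat with $R_{\nabla}^{\textrm{bas}}= 0$. Flatness of $\nabla$ together with simple connectivity gives a global frame of $\nabla$-parallel sections of $A$; let $\mathfrak{g}\subseteq \Gamma(A)$ be the (finite-dimensional, since $M$ is connected and the parallel sections are determined by their value at a point) space of parallel sections, so that the evaluation map gives a bundle isomorphism $\mathfrak{g}\times M\xrightarrow{\ \sim\ } A$. The key point is that $\mathfrak{g}$ is closed under the Lie bracket of $\Gamma(A)$: for parallel $\alpha, \beta$ the identity $R_{\nabla}^{\textrm{bas}}(\alpha, \beta)(X)= \nabla_X([\alpha,\beta])$ (all other terms drop out because $\nabla_X\alpha= \nabla_X\beta= 0$ and $\nabla^{\textrm{bas}}$ on $TM$ also involves only $\nabla_X\alpha$, $\nabla_X\beta$) combined with $R_{\nabla}^{\textrm{bas}}= 0$ yields $\nabla_X([\alpha,\beta])= 0$, i.e. $[\alpha,\beta]\in \mathfrak{g}$. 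Hence $\mathfrak{g}$ is a Lie algebra, the composite $\mathfrak{g}\hookrightarrow \Gamma(A)\xrightarrow{\rho} \Gamma(TM)$ is a Lie algebra homomorphism (since $\rho$ is a bracket morphism), and this is precisely an action of $\mathfrak{g}$ on $M$. Finally one verifies that the bracket and anchor of $A$, transported through $\mathfrak{g}\times M\cong A$, agree with those of the action algebroid: the anchor does by construction, and for the bracket one uses the Leibnitz rule to reduce an arbitrary bracket $[f\alpha, g\beta]$ (with $\alpha,\beta\in\mathfrak{g}$, $f,g\in C^\infty(M)$) to $fg[\alpha,\beta]+ f\rho(\alpha)(g)\beta- g\rho(\beta)(f)\alpha$, which is exactly the action-algebroid bracket.

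The main obstacle is the ``if'' direction, and within it the crucial step is showing $\mathfrak{g}= \{\text{parallel sections}\}$ is bracket-closed; everything hinges on the clean vanishing of all but one term of $R_{\nabla}^{\textrm{bas}}(\alpha,\beta)$ on parallel sections, which in turn requires being careful that the basic connection $\nabla^{\textrm{bas}}$ appearing inside the formula for $R_{\nabla}^{\textrm{bas}}$ only brings in $\nabla_X\alpha$-type terms (it does, by its definition $\nabla^{\textrm{bas}}_{\alpha}X= \rho(\nabla_X\alpha)+ [\rho(\alpha),X]$, so on parallel $\alpha$ it reduces to $[\rho(\alpha),X]$, and then $\nabla_{[\rho(\alpha),X]}(\beta)= 0$ again by parallelism of $\beta$). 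One should also make sure simple connectivity is used only to globalize the locally-defined parallel frame — which is the standard holonomy argument — and note that flatness of $\nabla$ is what makes the space of parallel sections have the ``right'' dimension $\operatorname{rank}(A)$.
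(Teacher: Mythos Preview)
Your proof is correct and follows essentially the same route as the paper's: for the substantive ``if'' direction, both use flatness plus simple connectivity to obtain a global frame of parallel sections, then use the vanishing of $R_{\nabla}^{\textrm{bas}}$ on parallel sections to conclude that the bracket of two parallel sections is again parallel (the paper phrases this as ``the structure functions $c_{ij}^k$ are constant'', which is the same statement in a chosen frame). One small remark: the jet-bundle identity $R_{\nabla}^{\textrm{bas}}(\alpha,\beta)= j^{\nabla}([\alpha,\beta])-[j^{\nabla}(\alpha),j^{\nabla}(\beta)]$ and Proposition~\ref{properties-basic}, which you announce in your plan, are never actually used in your argument --- all you need is the defining formula for $R_{\nabla}^{\textrm{bas}}$ and the observation that $\nabla_Y\alpha=0$ for \emph{all} $Y$ kills the last two terms directly (you do not need to unwind what $\nabla^{\textrm{bas}}_{\alpha}X$ equals, only that the result is some vector field along which $\alpha$, $\beta$ are still parallel).
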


\begin{proof}
If $A$ is associated to a Lie algebra action, one chooses $\nabla$
to be the obvious flat connection. Assume now that there is a
connection $\nabla$ as above. Since $M$ is simply connected, the
bundle is trivial. Choose a frame of flat sections $\alpha_1, \dots,
\alpha_r$ of $A$, and write
\begin{equation*}
[\alpha_i,\alpha_j]=\sum_{k=1}^r c^k_{ij}\alpha_k
\end{equation*}
with $c_{ij}^{k}\in C^{\infty}(M)$. Since
\[ R_{\nabla}^{\textrm{bas}}(\alpha_i,\alpha_j)(X)=\sum_{k=1}^r \nabla_X(c^k_{ij}\alpha_k)=
\sum_{k=1}^r X(c^k_{ij})\alpha_k,\] we deduce that the
$c_{ij}^{k}$'s are constant. The Jacobi identity for the Lie bracket
on $\Gamma(A)$ implies that $c_{ij}^{k}$'s are the structure
constants of a Lie algebra, call it $\mathfrak{g}$. The anchor map
defines an action of $\mathfrak{g}$ on $M$, and the trivialization
of $A$ induces the desired isomorphism.
\end{proof}

In particular consider $A=TM$ for some compact simply connected
manifold $M$ and a flat connection $\nabla$ on $A$. The condition
that the basic curvature vanishes means precisely that the
conjugated connection $\overline{\nabla}$ defined by:
\begin{equation*}
\overline{\nabla}_X(Y)=[X,Y]+\nabla_Y(X)
\end{equation*}
is also flat. This implies that $M$ is a Lie group.

The next result refers to the relation between bundles of Lie
algebras (viewed as Lie algebroids with zero anchor map) and Lie
algebra bundles, for which the fiber Lie algebra is fixed in the
local trivializations.

\begin{proposition}
Let $A$ be a bundle of Lie algebras over $M$. Then $A$ is a  Lie
algebra bundle if and only if it admits a connection $\nabla$ whose
basic curvature  vanishes.
\end{proposition}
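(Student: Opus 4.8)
The plan is to exploit the previous proposition characterizing bundles of Lie algebras that come from Lie algebra actions, specialized to the case of zero anchor, together with a partition-of-unity argument to pass from a local statement to a global one. Concretely, since $\rho = 0$ here, the basic connection on $A$ reduces to $\nabla^{\textrm{bas}}_\alpha(\beta) = [\alpha,\beta]$ (the bracket is fiberwise), and the basic curvature reduces to $R^{\textrm{bas}}_\nabla(\alpha,\beta)(X) = \nabla_X([\alpha,\beta]) - [\nabla_X(\alpha),\beta] - [\alpha,\nabla_X(\beta)]$; that is, the condition $R^{\textrm{bas}}_\nabla = 0$ says exactly that $\nabla$ is a derivation of the fiberwise bracket in the sense that parallel transport along any curve is a Lie algebra homomorphism.

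For the ``only if'' direction, suppose $A$ is a Lie algebra bundle: there is an open cover $\{U_i\}$ of $M$ with trivializations $A|_{U_i} \cong U_i \times \mathfrak{g}$ under which the bracket is the constant bracket of $\mathfrak{g}$. On each $U_i$ take $\nabla^i$ to be the trivial (product) connection in this chart; then $\nabla^i_X([\alpha,\beta]) = [\nabla^i_X\alpha,\beta] + [\alpha,\nabla^i_X\beta]$ holds because the structure constants are constant, so $R^{\textrm{bas}}_{\nabla^i} = 0$. The issue is that the convex combination $\nabla = \sum_i \varphi_i \nabla^i$ built from a partition of unity $\{\varphi_i\}$ need not preserve the bracket, since the difference of two such connections is a $\mathrm{Hom}(TM,A)$-valued form and averaging does not respect the nonlinear constraint. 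So instead I would argue more carefully: on overlaps $U_i \cap U_j$ the transition functions are \emph{constant} gauge transformations of $\mathfrak{g}$ (elements of $\mathrm{Aut}(\mathfrak{g})$, possibly varying with the point but valued in the automorphism group), hence the flat connections $\nabla^i$ need not agree; but what one really wants is just \emph{one} connection whose basic curvature vanishes. The cleanest route is: choose any connection $\nabla_0$ on $A$; the basic curvature $R^{\textrm{bas}}_{\nabla_0} \in \Omega^2(A;\mathrm{Hom}(TM,A))$ — when $\rho=0$ — is tensorial in all arguments, and one checks that modifying $\nabla_0$ by $\omega \in \Omega^1(M;\mathrm{Hom}(A,A))$ changes $R^{\textrm{bas}}$ by an explicit expression involving $\omega$; then the vanishing locus is governed by whether $\omega$ can be chosen to kill $R^{\textrm{bas}}_{\nabla_0}$, which is a \emph{linear} (hence partition-of-unity-friendly) problem once we know it is locally solvable. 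Local solvability is precisely the content of the existence of the flat product connections $\nabla^i$ in the Lie algebra bundle charts.

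For the ``if'' direction, suppose $\nabla$ has $R^{\textrm{bas}}_\nabla = 0$. Fix a point $p \in M$ and a contractible neighborhood $U$; since the curvature $R_\nabla$ of $\nabla$ itself need not vanish, I cannot directly produce flat local frames, so instead I would use the vanishing of $R^{\textrm{bas}}_\nabla$ as follows. Pick any local frame $e_1,\dots,e_r$ of $A$ over $U$ and write $[e_i,e_j] = \sum_k c^k_{ij} e_k$ with $c^k_{ij} \in C^\infty(U)$. The condition $R^{\textrm{bas}}_\nabla(e_i,e_j)(X) = 0$ becomes, after expanding $\nabla_X e_i = \sum \Gamma^l_i(X) e_l$, a first-order PDE relating the derivatives $X(c^k_{ij})$ to the Christoffel symbols $\Gamma$ and the $c$'s; this is the exact analogue of the computation in the proof two propositions earlier, where vanishing basic curvature forced the structure constants to be constant in a \emph{flat} frame. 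Here I would instead invoke the following: $R^{\textrm{bas}}_\nabla = 0$ means parallel transport $\tau_\gamma \colon A_{\gamma(0)} \to A_{\gamma(1)}$ along any path $\gamma$ is a Lie algebra isomorphism — this is a direct integration of the derivation identity along $\gamma$. Fixing $p$ and a frame of $A_p$, radial parallel transport from $p$ over a contractible $U$ gives a frame of $A|_U$ in which, by the isomorphism property, the structure constants equal those of $A_p$ — constant. Hence $A|_U \cong U \times A_p$ as bundles of Lie algebras, so $A$ is a Lie algebra bundle. (One must check radial parallel transport gives a \emph{smooth} frame, which is standard.)

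The main obstacle is the ``only if'' direction's globalization: producing a single connection with vanishing basic curvature from the locally defined product connections, since the defining condition is not linear in the connection. I expect this to be handled exactly as in the theory of secondary characteristic classes referenced in the text (the basic connection/curvature setup of \cite{CF2}), where one shows the relevant obstruction is the image of something that vanishes on a Lie algebra bundle; alternatively, one observes that $\mathrm{Aut}(\mathfrak{g})$-valued transition functions together with vanishing basic curvature on each chart let one average in the affine space of \emph{bracket-preserving} connections, which \emph{is} convex. The ``if'' direction should be routine once parallel transport is identified as a Lie algebra homomorphism.
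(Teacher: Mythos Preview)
Your ``if'' direction is correct and is exactly what the paper does: vanishing basic curvature (which, since $\rho=0$, reduces to the derivation identity $\nabla_X[\alpha,\beta]=[\nabla_X\alpha,\beta]+[\alpha,\nabla_X\beta]$) forces parallel transport to be a Lie algebra isomorphism, and radial parallel transport over a contractible neighborhood furnishes the Lie algebra bundle charts.

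For the ``only if'' direction you have talked yourself out of the correct, easy argument. You write that ``averaging does not respect the nonlinear constraint,'' but in the case $\rho=0$ the basic curvature
\[
R^{\textrm{bas}}_{\nabla}(\alpha,\beta)(X)=\nabla_X[\alpha,\beta]-[\nabla_X\alpha,\beta]-[\alpha,\nabla_X\beta]
\]
is \emph{affine-linear} in $\nabla$: each of the three terms is linear in $\nabla_X$, so for any partition of unity $\{\varphi_i\}$ one has $R^{\textrm{bas}}_{\sum\varphi_i\nabla^i}=\sum_i\varphi_i\,R^{\textrm{bas}}_{\nabla^i}$. (The quadratic-looking correction terms $\nabla_{\nabla^{\textrm{bas}}_\beta X}\alpha$ in the general definition vanish here because $\nabla^{\textrm{bas}}$ on $TM$ is identically zero when $\rho=0$.) Hence the set of connections with vanishing basic curvature is convex, and the partition-of-unity gluing of the local product connections $\nabla^i$ works on the nose. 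This is precisely the paper's argument, in one line. Your detours through modifying a reference connection by $\omega$, or invoking secondary characteristic classes, are unnecessary; and while your closing parenthetical about the affine space of bracket-preserving connections being convex is the right observation, you present it as a hopeful alternative rather than the immediate fact that settles the matter.
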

\begin{proof}
If the bundle of Lie algebras is locally trivial then locally one
can choose connections with zero $A$-curvature. Then one can use
partitions of unity to construct a global connection with the same
property. For the converse, assume there exists a $\nabla$ with
$R_{\nabla}^{A}= 0$, and we need to prove that the Lie algebra
structure on the fiber is locally trivial. We may assume that $A=R^n
\times R^r$ as a vector bundle. The vanishing of the basic curvature
means that $\nabla$ acts as derivations of the Lie algebra fibers:
\begin{equation*}
\nabla_X([\alpha,\beta])=[\nabla_X(\alpha),\beta]+[\alpha,\nabla_X(\beta)]
\end{equation*}
Since derivations are infinitesimal automorphisms, we deduce that
the parallel transports induced by $\nabla$ are Lie algebra
isomorphisms, providing the necessary Lie algebra bundle
trivialization.
\end{proof}

\section{Representations up to homotopy}
\label{Representations up to homotopy}

\subsection{Representations up to homotopy and first examples}
\label{Representations up to homotopy and first examples}

In this section we introduce the notion of representation up to
homotopy and the adjoint representation of Lie algebroids. As
before, $A$ is a Lie algebroid over $M$. We start with the shortest,
but less intuitive description of representations up to homotopy.

\begin{definition} A representation up to homotopy of $A$ consists of a graded vector bundle $E$ over $M$
and an operator, called the structure operator,
\begin{equation*}
D:\Omega(A;E)\rightarrow \Omega(A;E)
\end{equation*}
which increases the total degree by one and satisfies $D^2= 0$ and
the graded derivation rule:
\begin{equation*}
D(\omega \eta)=d_A(\omega) \eta +(-1)^k \omega D(\eta)
\end{equation*}
for all $\omega\in \Omega^k(A)$, $\eta\in \Omega(A; E)$.
The cohomology of the resulting complex is denoted by
$H^{\bullet}(A; E)$.
\end{definition}

Intuitively, a representation up to homotopy of $A$ is a complex
endowed with an $A$-connection which is ``flat up to homotopy''. We
will make this precise in what follows.

\begin{proposition}\label{Rep-long}
There is a 1-1 correspondence between representations up to homotopy
$(E, D)$ of $A$ and graded vector bundles $E$ over $M$ endowed with
\begin{enumerate}
\item A degree $1$ operator $\partial$ on $E$ making $(E, \partial)$ a complex.
\item An $A$-connection $\nabla$ on $(E, \partial)$.
\item An $\textrm{End(E)}$-valued $2$-form $\omega_{2}$ of total degree 1, i.e.
\[ \omega_{2}\in \Omega^2(A; \underline{\textrm{End}}^{-1}(E))\]
satisfying
\[ \partial(\omega_{[2]})+ R_{\nabla}= 0,\]
where $R_{\nabla}$ is the curvature of $\nabla$.
\item For each $i> 2$ an $\textrm{End(E)}$-valued $i$-form $\omega_{i}$ of total degree 1, i.e.
\[ \omega_{i}\in \Omega^i(A; \underline{\textrm{End}}^{1-i}(E))\]
satisfying
\[ \partial(\omega_{i})+ d_{\nabla}(\omega_{i-1})+ \omega_{2}\circ \omega_{i-2}+
\omega_{3}\circ \omega_{i-3}+ \ldots + \omega_{i-2}\circ \omega_{2}=
0.\]
\end{enumerate}
The correspondence is characterized by
\[ D(\eta)= \partial(\eta)+ d_{\nabla}(\eta)+ \omega_{2}\wedge \eta+ \omega_{3}\wedge \eta+ \ldots .\]
\end{proposition}

We also write
\begin{equation}
\label{structure} D= \partial+ \nabla + \omega_{2}+ \omega_{3}+
\ldots .
\end{equation}

\begin{proof}
Due to the derivation rule and the fact that $\Omega(A; E)$ is
generated as an $\Omega(A)$-module by $\Gamma(E)$, the operator $D$
will be uniquely determined by what it does on $\Gamma(E)$. It will
send each $\Gamma(E^k)$ into the sum
\[ \Gamma(E^{k+1})\oplus \Omega^1(A; E^k)\oplus \Omega^2(A; E^{k-1})\oplus \ldots ,\]
hence it will also send each $\Omega^p(A; E^k)$ into the sum
\[ \Omega^p(A; E^{k+1})\oplus \Omega^{p+1}(A; E^k)\oplus \Omega^{p+2}(A; E^{k-1})\oplus \ldots ,\]
and we denote by $D_0$, $D_1$, $\ldots$ the components of $D$. From
the derivation rule for $D$, we deduce that each $D_i$ for $i\neq 1$
is a (graded) $\Omega(A)$-linear map and, by Lemma
\ref{lemma-graded}, it is the wedge product with an element in
$\Omega(A; \underline{\textrm{End}}(E))$. On the other hand, $D_1$
satisfies the derivation rule on each of the vector bundles $E^k$
and, by Proposition \ref{connections-derivations}, it comes from
$A$-connections on these bundles. The equations in the statement
correspond to $D^2= 0$.
\end{proof}

Next, one can define the notion of morphism between representations
up to homotopy.

\begin{definition} A morphism $\Phi: E\to F$ between two representations up to homotopy
of $A$ is a degree zero linear map
\begin{equation*}
\Phi:\Omega(A;E) \rightarrow \Omega(A;F)
\end{equation*}
which is $\Omega(A)$-linear and commutes with the structure
differentials $D_E$ and $D_F$.

We denote by $\RRep^{\infty}(A)$ the resulting category, and by
$\mathrm{Rep}^{\infty}(A)$ the set of isomorphism classes of representations
up to homotopy of $A$.
\end{definition}

By the same arguments as above, one gets the following description
of morphisms in $\RRep^{\infty}(A)$. A morphism is necessarily of
type
\begin{equation*}
\Phi=\Phi_0 +\Phi_1+\Phi_2 + \cdots
\end{equation*}
where $\Phi_i$ is a $\textrm{Hom}(E, F)$-valued $i$-form on $A$ of
total degree zero:
\[ \Phi_i \in \Omega^i(A; \textrm{Hom}^{-i}(E,F))\]
satisfying
\[ \partial(\Phi_n)+ d_{\nabla}(\Phi^{n-1})+ \sum_{i+j= n, i\geq 2} [\omega_{i}, \Phi_{j}]= 0.\]
Note that, in particular, $\Phi_0$ must be a map of complexes.

\begin{example}[Usual representations]\rm \ Of course, any representation $E$ of $A$ can be seen as a representation
up to homotopy concentrated in degree zero. More generally, for any
integer $k$, one can form the representation up to homotopy $E[-k]$,
which is $E$ concentrated in degree $k$.
\end{example}

\begin{example}[Differential forms]\label{Differential forms}\rm \  Any closed form $\omega\in \Omega^n(A)$ induces a representation up
to homotopy on the complex which is the trivial line bundle in
degrees $0$ and $n-1$, and zero otherwise. The structure operator is
$\nabla^{\textrm{flat}}+ \omega$ where $\nabla^{\textrm{flat}}$ is
the flat connection on the trivial line bundle. If $\omega$ and
$\omega'$ are cohomologous, then the resulting representations up to
homotopy are isomorphic with isomorphism defined by $\Phi_0=
\textrm{Id}$, $\Phi_{n-1}= \theta\in \Omega^{n-1}(A)$ chosen so that
$d(\theta)= \omega- \omega'$. In conclusion, there is a well defined
map $H^{\bullet}(A)\to \textrm{Rep}^{\infty}(A)$.
\end{example}

\begin{example}[Conjugation]\rm \ For any representation up to homotopy $E$ with
structure operator $D$ given by (\ref{structure}), one can form a
new representation up to homotopy $\overline{E}$, which has the same
underlying graded vector bundle as $E$, but with the structure
operator
\[ \overline{D}= - \partial + \nabla - \omega_2+ \omega_3- \omega_4 + \ldots .\]
In general, $E$ and $\overline{E}$ are isomorphic, with isomorphism
$\Phi= \Phi_0$ equal to $(-1)^n\textrm{Id}$ on $E^n$.
\end{example}

\begin{remark}\label{rk-length-1}\rm\ Let us now be more explicit on the building blocks of
representations up to homotopy which are concentrated in two
consecutive degrees, say $0$ and $1$. From Proposition
\ref{Rep-long} we see that such a representation consists of
\begin{enumerate}
\item Two vector bundles $E$ and $F$, and a vector bundle map $f: E\to F$.
\item $A$-connections on $E$ and $F$, both denoted $\nabla$, compatible with $\partial$ ($\nabla_{\alpha} \partial= \partial \nabla_{\alpha}$).
\item A 2-form $K\in \Omega^2(A; \textrm{Hom}(F, E))$ such that
\[ R_{\nabla^E}= \partial\circ K,\ R_{\nabla^F}= K\circ \partial\]
and such that $d_{\nabla}(K)= 0$.
\end{enumerate}
\end{remark}

\begin{example} [The double of a vector bundle]\label{The double of a vector bundle}\rm \  Let
$E$ be a vector bundle over $M$. For any $A$-connection $\nabla$ on
$E$ with curvature $R_{\nabla}\in \Omega^2(A;
\underline{\textrm{End}}(E))$, the complex
$E\stackrel{\textrm{Id}}{\to} E$ concentrated in degrees $0$ and
$1$, together with the structure operator
\[ D_{\nabla}:= \textrm{Id}+ \partial+ R_{\nabla}\]
defines a representation up to homotopy of $A$ denoted
$\mathcal{D}_{E, \nabla}$. The resulting element
\[ \mathcal{D}_E\in \textrm{Rep}^{\infty}(A)\]
does not depend on the choice of the connection. To see this, remark
that if $\nabla'$ is another $A$-connection, then there is an
isomorphism
\[ \Phi: (\mathcal{D}_E, D_{\nabla})\to (\mathcal{D}_E, D_{\nabla'})\]
with two components:
\[ \Phi^0= \textrm{Id}, \Phi^{1}(\alpha)= \nabla_{\alpha}- \nabla_{\alpha}' .\]
\end{example}

We will now explain how representations up to homotopy of length $1$
are related to extensions.

\begin{proposition}\label{representations and extensions} For any representation up
to homotopy of length one with vector bundles $E$ in degree $0$ and
$F$ in degree $1$ and structure operator $D= \partial+ \nabla+ K$,
there is an extension of Lie algebroids:
\[ \mathfrak{g}_{\partial}\to \tilde{A}\to A\]
where
\begin{enumerate}
\item $\mathfrak{g}_{\partial}= \textrm{Hom}(F, E)$, is a bundle of Lie algebras with bracket
$[S, T]_{\partial}= S\partial T- T\partial S$.
\item $\tilde{A}= \mathfrak{g}_{\partial}\oplus A$ with anchor $(S, \alpha)\mapsto \rho(\alpha)$
and bracket
\[ [(S, \alpha), (T, \beta)]=  ([S,T]+\nabla_{\alpha}(T)-\nabla_{\beta}(S)+
K(\alpha,\beta),[\alpha,\beta]).\]
\end{enumerate}

\end{proposition}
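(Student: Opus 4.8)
The plan is to take $\tilde A$ to be the vector bundle $\mathrm{Hom}(F,E)\oplus A$ equipped with the anchor $\tilde\rho(S,\alpha)=\rho(\alpha)$ and the bracket written in the statement, to verify that this is a Lie algebroid, and then to read off exactness of the sequence and the fact that $\mathfrak{g}_\partial$ is an ideal carrying the fibrewise bracket $[\cdot,\cdot]_\partial$. Before attacking the Lie algebroid axioms I would record two preliminary facts, both coming from the structure of a length-one representation up to homotopy (Remark \ref{rk-length-1}, Proposition \ref{Rep-long}): first, $(\mathrm{Hom}(F,E),[\cdot,\cdot]_\partial)$ is a bundle of Lie algebras, because $[S,T]_\partial$ is the commutator of the fibrewise \emph{associative} product $S\ast T:=S\partial T$, so the Jacobi identity is automatic, and $C^\infty(M)$-bilinearity is clear since $\partial$ is a bundle map; second, the connection induced by $\nabla$ on $\mathrm{Hom}(F,E)$ acts by derivations of $[\cdot,\cdot]_\partial$, which is precisely the compatibility $[\nabla_\alpha,\partial]=0$ unwound through the Leibniz rule of the induced connection on a composition of homomorphisms.

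Antisymmetry of the $\tilde A$-bracket is immediate: $[\cdot,\cdot]_\partial$ and the bracket of $A$ are antisymmetric, $K$ is a $2$-form, and $\nabla_\alpha(T)-\nabla_\beta(S)$ changes sign under the swap $(S,\alpha)\leftrightarrow(T,\beta)$. The Leibniz identity $[\tilde a,f\tilde b]=f[\tilde a,\tilde b]+\rho(\alpha)(f)\,\tilde b$ is checked componentwise: in the $A$-component it is the Leibniz rule of $A$; in the $\mathrm{Hom}(F,E)$-component one uses that $\partial$ and $K$ are tensorial (so $[S,fT]_\partial=f[S,T]_\partial$, $K(\alpha,f\beta)=fK(\alpha,\beta)$, $\nabla_{f\beta}S=f\nabla_\beta S$) together with the Leibniz rule for $\nabla$ on $\mathrm{Hom}(F,E)$, which produces exactly the term $\rho(\alpha)(f)\,T$.

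The heart of the proof is the Jacobi identity, and this is the step I expect to be the main obstacle because of the sign bookkeeping. I would expand the Jacobiator $\sum_{\mathrm{cyc}}[[\tilde a,\tilde b],\tilde c]$. Its $A$-component is $\sum_{\mathrm{cyc}}[[\alpha,\beta],\gamma]=0$ by the Jacobi identity of $A$. The $\mathrm{Hom}(F,E)$-component splits into four groups of terms: (i) the purely fibrewise part $\sum_{\mathrm{cyc}}[[S,T]_\partial,U]_\partial$, which vanishes since $[\cdot,\cdot]_\partial$ is a Lie bracket; (ii) the terms mixing $\nabla$ with $[\cdot,\cdot]_\partial$, which cancel in the cyclic sum once each $\nabla_\gamma[S,T]_\partial$ is expanded using that $\nabla$ is a derivation of $[\cdot,\cdot]_\partial$; (iii) the terms quadratic in $\nabla$, which reassemble (with the $\nabla_{[\alpha,\beta]}$-terms) into $-R_\nabla(\alpha,\beta)U-R_\nabla(\beta,\gamma)S-R_\nabla(\gamma,\alpha)T$ for the induced curvature on $\mathrm{Hom}(F,E)$, and which — via the structure equation $\partial(K)+R_\nabla=0$ (item 3 of Proposition \ref{Rep-long}) — turn into commutator terms $\sum_{\mathrm{cyc}}[K(\alpha,\beta),U]_\partial$; (iv) the remaining terms $\sum_{\mathrm{cyc}}\bigl(\nabla_\gamma(K(\alpha,\beta))-K([\alpha,\beta],\gamma)\bigr)$ together with the commutator terms $\sum_{\mathrm{cyc}}[K(\alpha,\beta),U]_\partial$ coming from the $\mathrm{Hom}(F,E)$-component of $[\tilde a,\tilde b]$, which together are precisely $d_\nabla(K)(\alpha,\beta,\gamma)$ applied to the appropriate argument and hence vanish by $d_\nabla(K)=0$ (the $i=3$ equation of Proposition \ref{Rep-long}, valid since $\omega_i=0$ for $i\geq 3$ in the length-one case). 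The delicate point is that the cancellation in (iii)--(iv) depends on carrying the correct signs in $\partial(K)=[\partial,K]$ and in the induced curvature on $\mathrm{Hom}(F,E)$; the cleanest way to keep these straight is to recognize that the four data of the representation up to homotopy satisfy exactly the equations $D^2=0$ of Proposition \ref{Rep-long}, or, equivalently, that the conditions of Remark \ref{rk-length-1} are precisely Mackenzie's integrability conditions for an extension of $A$ by the bundle of Lie algebras $\mathfrak{g}_\partial$.

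Finally, exactness of $\mathfrak{g}_\partial\to\tilde A\to A$ is evident from the direct sum description of $\tilde A$, and $\mathfrak{g}_\partial$ is an ideal because $[(S,\alpha),(T,0)]=([S,T]_\partial+\nabla_\alpha(T),0)$ lies in $\mathfrak{g}_\partial$; since the anchor vanishes on $\mathfrak{g}_\partial$, the restricted bracket is $C^\infty(M)$-bilinear and equals $[\cdot,\cdot]_\partial$, so $\mathfrak{g}_\partial$ is the stated bundle of Lie algebras. This completes the construction of the extension.
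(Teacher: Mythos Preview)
Your proposal is correct and follows the same strategy as the paper: reduce the Jacobi identity for $\tilde A$ to conditions that are implied by the structure equations of the length-one representation up to homotopy. The paper streamlines the bookkeeping by quoting Mackenzie's Theorem 7.3.7 to obtain three clean conditions (equations (\ref{ext1})--(\ref{ext3})), namely that $\nabla$ is a derivation of $[\cdot,\cdot]_\partial$, that the induced curvature on $\mathrm{Hom}(F,E)$ equals $[\cdot,K(\beta,\gamma)]_\partial$, and that $d_\nabla K=0$; these are exactly your groups (ii), (iii), and the $d_\nabla K$ part of (iv).

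One point to tidy in your writeup: in (iv) the commutator terms $\sum_{\mathrm{cyc}}[K(\alpha,\beta),U]_\partial$ are \emph{not} part of $d_\nabla K$ --- the latter is a $\mathrm{Hom}(F,E)$-valued $3$-form and sees no $S,T,U$. What actually happens is that the curvature terms produced in (iii), rewritten via $R_{\nabla^E}=\partial\circ K$ and $R_{\nabla^F}=K\circ\partial$ as $R_\nabla^{\mathrm{Hom}(F,E)}(\alpha,\beta)T=-[T,K(\alpha,\beta)]_\partial$, cancel against the explicit $[K(\alpha,\beta),U]_\partial$ terms coming from the bracket expansion; what then remains is the pure cyclic sum $\sum_{\mathrm{cyc}}\bigl(\nabla_\gamma K(\alpha,\beta)-K([\alpha,\beta],\gamma)\bigr)=d_\nabla K(\alpha,\beta,\gamma)=0$. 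This is exactly the split the paper records as equations (\ref{ext2}) and (\ref{ext3}).
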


\begin{proof} After a careful computation, we find that the Jacobi
identity for the bracket of $\tilde{A}$ breaks into the following
equations (cf. Theorem 7.3.7 in \cite{McK}):
\begin{equation}\label{ext1}
\nabla_{\alpha}([S,
T])=[\nabla_{\alpha}(S),T]+[S,\nabla_{\alpha}(T)]
\end{equation}
\begin{equation}\label{ext2}
\nabla_{[\beta,\gamma]}(T)-\nabla_{\beta}\nabla_{\gamma}(T)+\nabla_{\gamma}\nabla_{\beta}(T)=[T,K(\beta,\gamma)]
\end{equation}
\begin{equation}\label{ext3}
K([\alpha,\beta],\gamma)+K([\beta,\gamma],\alpha)+K([\gamma,\alpha],\beta)=\nabla_{\beta}(K(\gamma,\alpha))+
\nabla_{\alpha}(K(\beta,\gamma))+\nabla_{\gamma}(K(\alpha,\beta))
\end{equation}
for $\alpha, \beta, \gamma \in \Gamma(A)$ and $S, T\in
\Gamma(\frak{g}_{\partial})$. These equations are not equivalent to,
but they follow from the equations satisfied by $\partial$, $\nabla$
and $K$. The first equation follows from the compatibility of
$\partial$ and $\nabla$, the second equation follows from the two
equations satisfied by the curvature, while the last equation is
precisely $d_{\nabla}(K)= 0$.
\end{proof}

\begin{example}\label{Atiyah-p} \rm When $A= TM$ and $E$ is a vector bundle, the extension associated to
the double of $E$ (Example \ref{The double of a vector bundle})
is isomorphic to the ``Atiyah extension'' induced by $E$:
\[ \textrm{End}(E)\to \mathfrak{gl}(E)\to TM.\]
This extension is discussed e.g. in Section 1 of \cite{MY} (where
$\mathfrak{gl}(E)$ is denoted by $\mathcal{D}(E)$). Recall that
$\mathfrak{gl}(E)$ is the vector bundle over $M$ whose sections are
the derivations of $E$, i.e. pairs $(D, X)$ consisting of a linear
map $D: \Gamma(E)\to \Gamma(E)$ and a vector field $X$ on $M$, such
that $D(fs)= fD(s)+ L_{X}(f)s$ for all $f\in C^{\infty}(M)$, $s\in
\Gamma(E)$. The Lie bracket of $\mathfrak{gl}(E)$ is the commutator
\[ [(D, X), (D', X')]= (D\circ D'- D'\circ D, [X, X']),\]
while the anchor sends $(D, X)$ to $X$. A connection on $E$ is the
same thing as a splitting of the Atiyah extension, and it induces an
identification
\[ \mathfrak{gl}(E)\cong \textrm{End}(E)\oplus TM.\]
Computing the bracket (or consulting \cite{MY}) we find that, after
this identification, the Atiyah extension becomes the extension
associated to the double $\mathcal{D}_E$.
\end{example}

\subsection{The adjoint representation}
\label{subsection: The adjoint representation}

It is now clear that the properties
of the basic connections and the basic curvature given in
Proposition \ref{properties-basic} give the adjoint complex the
structure of a representation up to homotopy. Choosing a connection
on the vector bundle $A$, the adjoint complex
\[ A\stackrel{\rho}{\to} TM\]
together with the structure operator
\[ D_{\nabla}:= \rho + \nabla^{\textrm{bas}}+ R^{\textrm{bas}}_{\nabla}\]
becomes a representation up to homotopy of $A$, denoted
$\mathrm{ad}_{\nabla}$. The isomorphism class of this representation is
called the adjoint representation of $A$ and is denoted
\[ \textrm{ad}\in \textrm{Rep}^{\infty}(A) .\]

\begin{theorem}\label{The adjoint representation}
Given two connections on $A$, the corresponding adjoint representations
are naturally isomorphic. Also, there is an isomorphism:
\[ H^{\bullet}(A; \mathrm{ad}) \cong H^{\bullet}_{\mathrm{def}} (A).\]
\end{theorem}

\begin{proof} Let $\nabla$ and $\nabla'$ be two connections on $A$. Then,
$\Phi=\Phi_0+\Phi_1$ where
\[ \Phi_0= \textrm{Id} \text{,  }  \Phi_1(\alpha)(X)= \nabla_{X}(\alpha)- \nabla_{X}'(\alpha),\]
defines an isomorphism between the corresponding adjoint representations. For
the last part, note that there is an exact sequence
\[ 0\to \Omega^k(A; A)\to C^{k}_{\textrm{def}}(A)\stackrel{-\sigma}{\to} \Omega^{k-1}(A; TM)\to 0,\]
where $\sigma$ is the symbol map (see Subsection
\ref{subsection:deformations}). A connection
$\nabla$ on $A$ induces a splitting of this sequence, and then an
isomorphism
\begin{eqnarray*}
\Psi: C_{\textrm{def}}^k(A) &\rightarrow &  \Omega^k(A; A)\oplus\Omega^{k-1}(A; TM)= \Omega(A, \textrm{ad})^k\\
D & \mapsto & (c_D,-\sigma_D)
\end{eqnarray*}
where $\sigma_D$ is the symbol of $D$ and $c_D$ is given by
\[ c_D(\alpha_1, \dots,\alpha_k)= D(\alpha_1, \dots ,\alpha_k)+(-1)^{k-1}
\sum_{i=1}^k(-1)^{i} \nabla_{ \sigma(D)(\alpha_1, \dots
,\widehat{\alpha_i}, \dots,\alpha_k)}(\alpha_i). \] This map is an
isomorphism of $\Omega(A)$-modules. We need to prove that the
operators $\delta$ and $D_{\nabla}$ coincide. Since these two
operators are derivations with respect to the module structures, it
is enough to prove that they coincide in low degrees, and this can be checked by inspection.
\end{proof}

We will now explain the relationship between the adjoint representation and the first jet algebroid.  
 We denote by $J^1(A)$ the first jet bundle of $A$
and by $\pi: J^1(A)\to A$ the canonical projection. For a section $\alpha$ of $A$ we denote by $j^1(\alpha)\in \Gamma(J^1A)$ its
first jet. It is well-known (for a proof, see e.g. \cite{CF2}) that
$J^1(A)$ admits a unique Lie algebroid structure such that for any
section $\alpha$ of $A$,
\begin{equation}\label{jet1}
\rho(j^1\alpha)=\rho(\alpha)
\end{equation}
and for sections $\alpha, \beta \in \Gamma(A)$ :
\begin{equation}\label{jet2}
[j^1\alpha, j^1\beta]=j^1([\alpha, \beta]).
\end{equation}
The jet Lie algebroid fits into a short exact sequence of Lie algebroids
\begin{equation}\label{jet-extension}
\textrm{Hom}(TM, A)\stackrel{i}{\to} J^1(A) \stackrel{\pi}{\to} A ,
\end{equation}
where the inclusion $i$ is determined by the condition
\[ \textrm{Hom}(TM, A)\ni df\otimes \alpha \mapsto fj^{1}(\alpha)- j^1(f\alpha),\]
for all $f\in C^{\infty}(M)$, $\alpha\in \Gamma(A)$.
On the other hand, Proposition \ref{representations and extensions}
associates to the adjoint representation an extension of Lie algebroids.

\begin{proposition} \label{cor-first-jet} The extension associated to the adjoint representation 
is isomorphic to the first-jet extension $\textrm{Hom}(TM, A)\stackrel{i}{\to} J^1(A) \stackrel{\pi}{\to} A$.
\end{proposition} 

\begin{proof} Note that, taking global sections in (\ref{jet-extension}), we obtain a split short exact sequence with splitting $j^1$. With respect to the resulting decomposition, the Lie bracket on $\Gamma(J^1A)$ is 
given by (\ref{jet2}), the Lie bracket on $Hom(TM, A)$:
\[ [T, S]= T\circ \rho\circ S- S\circ \rho\circ T ,\]
and the Lie bracket $[j^1(\alpha), T]\in Hom(TM, A)$ between $j^1(\alpha)$ and $T\in Hom(TM, A)$:
\[ [j^1(\alpha), T](X)= T([X, \rho(\alpha)])+ [\alpha, T(X)].\]
The last two formulas follow from the Leibniz identity for the bracket on $\Gamma(J^1(A))$ and writing
$T$ and $S$ as a sum of expressions of type $df\otimes \alpha= fj^{1}(\alpha)- j^1(f\alpha)$. 

Next, giving a connection $\nabla$ on $A$ is equivalent to choosing a vector bundle
splitting $j^{\nabla}: A\to J^1(A)$ of $\pi$. The relation between
the two is given by
\[ j^{\nabla}(\alpha)= j^{1}(\alpha)+ \nabla_{\cdot}(\alpha),\]
for all $\alpha\in \Gamma(A)$. Here $\nabla_{\cdot}(\alpha)\in
\textrm{Hom}(TM, A)$ is given by $X\mapsto \nabla_{X}(\alpha)$. Using this and the previous formulas,
a careful but straightforward computation shows that 
\[ [j^{\nabla}(\alpha), T]= \nabla^{\textrm{bas}}_{\alpha}\circ T- T\circ \nabla^{\textrm{bas}}_{\alpha}= \nabla^{\textrm{bas}}_{\alpha}(T),\]
\[ j^{\nabla}([\alpha, \beta])- [j^{\nabla}(\alpha), j^{\nabla}(\beta)]= R_{\nabla}^{\textrm{bas}}(\alpha, \beta),\]
for all $\alpha, \beta\in \Gamma(A)$, $T\in \Gamma(Hom(TM, A))$. This shows that the vector bundle isomorphism
$J^1(A)\cong A\oplus Hom(TM, A)$ induced by the splitting $j^{\nabla}$ identifies the Lie algebroid bracket of $J^1(A)$ with the
extension bracket of Proposition \ref{representations and extensions} applied to $\textrm{ad}_{\nabla}$.
\end{proof}

We note that there doesn't seem to be any construction which
associates to a Lie algebroid extension of $A$ a representation up
to homotopy so that, applying it to $J^1(A)$ one  recovers the
adjoint representation. In other words, $J^1(A)$ with its structure
of extension of $A$ does not contain all the information about the
structure of the adjoint representation.

\subsection{The case of tangent bundles}

The representations up to homotopy of $TM$ are connections on
complexes of vector bundles which are \textit{flat up to homotopy}.
Indeed, at least the first equation in Proposition \ref{Rep-long}
says that the curvature of $\nabla$ is trivial cohomologically (up
to homotopy).

On the other hand, a flat connection $\nabla$ on a vector bundle $E$
can be integrated to a representation of the fundamental groupoid
of $M$. This correspondence is induced by parallel transport. To be
more precise, given a vector bundle $E$ endowed with a connection
$\nabla$, for any path $\gamma$ in $M$ from $x$ to $y$, the parallel
transport along $\gamma$ with respect to $\nabla$ induces a linear
isomorphism
\[ T_{\gamma}: E_{x}\to E_{y} .\]
This construction is compatible with path concatenation. When
$\nabla$ is flat, $T_{\gamma}$ only depends on the homotopy class of
$\gamma$, and this defines an action of the homotopy groupoid of
$M$ on $E$. It is only natural to ask what is the corresponding notion
of parallel transport for connections which are flat up to homotopy.
\begin{proposition}\label{parallel} Let $(E, D)$ be a representation up to homotopy of $TM$.
Then:
\begin{enumerate}
\item For any path $\gamma$ in $M$ from $x$ to $y$, there is an induced
chain map
\[ T_{\gamma}: (E_x, \partial)\to (E_y, \partial)\]
and this construction is compatible with path concatenation. More
precisely, $T_{\gamma}$ is the parallel transport with respect to
the connection underlying $D$.
\item If $\gamma_0$ and $\gamma_1$ are two homotopic paths in $M$ from $x$ to $y$
then $T_{\gamma_0}$ and $T_{\gamma_1}$ are chain homotopic. More
precisely, for any homotopy $h$ between $\gamma_0$ and $\gamma_1$
there is an associated map of degree $-1$, $T_h: E_x\to E_y$, such
that
\[ T_{\gamma_1}- T_{\gamma_0}= [\partial, T_{h}].\]
\end{enumerate}
\end{proposition}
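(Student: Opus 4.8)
The plan is to recognize the chain maps and chain homotopies in the statement as nothing but the ordinary parallel transport of the connection $\nabla$ underlying $D$ and its variation along a homotopy, the key point being that, by Proposition~\ref{Rep-long}, the curvature of $\nabla$ is a $\partial$-commutator. Write $D = \partial + \nabla + \omega_2 + \omega_3 + \cdots$ as in Proposition~\ref{Rep-long}; thus $\nabla$ is an ordinary connection on $E$ preserving the grading, the compatibility $\nabla_X\partial=\partial\nabla_X$ says precisely that $\partial$ is a $\nabla$-parallel section of $\underline{\textrm{End}}(E)$, and $\omega_2\in\Omega^2(M;\underline{\textrm{End}}^{-1}(E))$ satisfies $R_\nabla(X,Y) = -\big(\partial\circ\omega_2(X,Y)+\omega_2(X,Y)\circ\partial\big)$, which is just $\partial(\omega_2)+R_\nabla=0$ written out, $\omega_2$ having total degree $1$. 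For part (1) there is nothing to invent: set $T_\gamma\colon E_x\to E_y$ equal to the $\nabla$-parallel transport along $\gamma$. It preserves the grading because $\nabla$ does, and it commutes with $\partial$ because $\partial$ is $\nabla$-parallel and parallel sections are preserved by parallel transport; hence $T_\gamma$ is a morphism of complexes, and compatibility with concatenation is the usual functoriality of parallel transport.

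For part (2), let $h\colon[0,1]^2\to M$ be the given homotopy, with $h(\cdot,0)=\gamma_0$, $h(\cdot,1)=\gamma_1$, $h(0,t)=x$, $h(1,t)=y$, and set $\gamma_t=h(\cdot,t)$. First I would differentiate $t\mapsto T_{\gamma_t}$ by the classical variation-of-holonomy computation on the pullback $h^*E\to[0,1]^2$: writing $P(s,t)\colon E_x\to(h^*E)_{(s,t)}$ for the $h^*\nabla$-parallel transport along $u\mapsto(u,t)$, $u\in[0,s]$ (so $P(1,t)=T_{\gamma_t}$, $P(0,t)=\textrm{id}$), and using that $h$ is constant along the edges $s=0,1$ so that there $\nabla_{\partial_t}$ reduces to $\tfrac{d}{dt}$, together with $[\nabla_{\partial_s},\nabla_{\partial_t}]=R_{h^*\nabla}(\partial_s,\partial_t)$ and $\nabla_{\partial_s}(P(\cdot,t)v)=0$, one obtains
\[ \frac{d}{dt}\,T_{\gamma_t} \;=\; \int_0^1 \big(P(1,t)P(s,t)^{-1}\big)\circ (h^*R_\nabla)(\partial_s h,\partial_t h)\big|_{(s,t)}\circ P(s,t)\; ds , \]
where $P(1,t)P(s,t)^{-1}$ is the $\nabla$-parallel transport along $\gamma_t$ from $\gamma_t(s)$ to $y$.

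Now substitute the expression for $R_\nabla$ above. Since both $P(s,t)$ and $P(1,t)P(s,t)^{-1}$ commute with $\partial$ (by part (1)), the integrand becomes $-\big(\partial\circ\Theta(s,t)+\Theta(s,t)\circ\partial\big)=-[\partial,\Theta(s,t)]$, where $\Theta(s,t)=\big(P(1,t)P(s,t)^{-1}\big)\circ\omega_2(\partial_s h,\partial_t h)\big|_{(s,t)}\circ P(s,t)\colon E_x\to E_y$ has degree $-1$. Hence $\tfrac{d}{dt}T_{\gamma_t}=\big[\partial,\,-\int_0^1\Theta(s,t)\,ds\big]$, and integrating once more in $t$ (the fixed endpoint operators $\partial_x,\partial_y$ pulling out of the integral),
\[ T_{\gamma_1}-T_{\gamma_0} \;=\; \int_0^1 \frac{d}{dt}T_{\gamma_t}\,dt \;=\; [\partial,T_h],\qquad T_h := -\int_0^1\!\!\int_0^1 \Theta(s,t)\,ds\,dt , \]
which is the asserted identity.

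The only genuinely nontrivial step is the variation formula for $\tfrac{d}{dt}T_{\gamma_t}$, together with keeping the degree signs straight in $\partial(\omega_2)=[\partial,\omega_2]$; the former is a standard Ambrose--Singer type identity, derived from $[\nabla_{\partial_s},\nabla_{\partial_t}]=R(\partial_s,\partial_t)$ on the square, and once it is available the rest is formal and the higher structure forms $\omega_i$, $i\ge3$, play no role. A perhaps cleaner organization would be to first pull the entire representation up to homotopy back along $h$ to one of $T([0,1]^2)$ and argue there, where contractibility of the square makes the bookkeeping transparent.
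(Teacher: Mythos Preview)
Your proof is correct and follows essentially the same route as the paper. Both arguments define $T_\gamma$ as the ordinary $\nabla$-parallel transport (using $\nabla\partial=\partial\nabla$ for part (1)), then derive the variation-of-holonomy identity $\tfrac{d}{dt}T_{\gamma_t}=\int_0^1 T_{\gamma_t}^{s,1}\,R_\nabla(\partial_s h,\partial_t h)\,T_{\gamma_t}^{0,s}\,ds$, substitute $R_\nabla=-[\partial,\omega_2]$, and integrate to produce the same $T_h$; the only difference is that the paper spells out the derivation of that variation formula by solving two first-order ODEs in succession (in $t$, then in the homotopy parameter), whereas you invoke it as the standard Ambrose--Singer formula and proceed directly.
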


\begin{proof} The compatibility of $\nabla$ with the grading and $\partial$ implies that
the parallel transport $T_{\gamma}$ is a map of chain complexes. We
now prove (2). Given a path $u: I\to E$ ($I= [0, 1]$), sitting over
some base path $\gamma: I\to M$, we denote by
\[ \frac{Du}{Dt}= \nabla_{\frac{d\gamma}{dt}}(u) \]
the derivative of $u$ with respect to the connection $\nabla$. Then,
for any path $\gamma$, and any $s, t$, the parallel transport
\[ T_{\gamma}^{s, t}: E_{\gamma(s)}\to E_{\gamma(t)} \]
is defined by the equation
\[ \frac{D}{Dt} T_{\gamma}^{s, t}(u)= 0,\ \ T_{\gamma}^{s, s}(u)= u.\]
The global parallel transport along $\gamma$, $T_{\gamma}: E_x\to
E_{y}$ is obtained for $s= 0$, $t= 1$. Note that, for a path in the
fiber above $\gamma(s)$,  $\phi: I\to E_{\gamma(s)}$, one has
\[ \frac{D}{Dt} T_{\gamma}^{s, t}(\phi(t))= T_{\gamma}^{s, t}(\frac{d\phi}{dt}(t)).\]
This implies that, for a path $v: I\to E$ above $\gamma$ and $u_0\in
E_x$, the unique solution of the equation
\[ \frac{Du}{dt}= v, \ u(0)= u_0\]
can be written in terms of the parallel transport as
\[ u(t)= T_{\gamma}^{0, t} (u_0+ \int_{0}^{t} T_{\gamma}^{t', 0}(v(t'))dt').\]

For any map $u: I\times I\to E$ sitting above some $h: I\times I\to
M$, we have
\[ \frac{D^2u}{DtD\epsilon}- \frac{D^2u}{D\epsilon Dt}= R(\frac{dh}{dt}, \frac{dh}{d\epsilon})
(u(\epsilon, t)),\]
where $R= R_{\nabla}$ is the curvature of $\nabla$. Let now $h$ be
as in the statement, $u_0\in E_x$, and consider in the previous
equation applied to
\[ u(\epsilon, t)= T_{\gamma_{\epsilon}}^{0, t}(u_0) .\]
We find
\[ \frac{D}{Dt} (\frac{Du}{D\epsilon})= R(\frac{dh}{dt}, \frac{dh}{d\epsilon})u,\]
where $\gamma_{\epsilon}= h(\epsilon, \cdot)$. Since
$\frac{Du}{D\epsilon}(\epsilon, 0)= 0$, we find
\[ \frac{Du}{D\epsilon}= T_{\gamma_{\epsilon}}^{0, t}\int_{0}^{t} T_{\gamma_{\epsilon}}^{t', 0}
R(\frac{dh}{dt}, \frac{dh}{d\epsilon})u(\epsilon, t') dt'.\] Fixing
the argument $t$, since
\[ u(0, t)= T_{\gamma_0}^{0, t}(u_0),\]
by the same argument as above, we deduce that
\[ u(\epsilon, t)= T_{h_t}^{0, \epsilon}[T_{\gamma_0}^{0, t}(u_0)+ \int_{0}^{\epsilon} T_{h_t}^{\epsilon', 0}T_{\gamma_{\epsilon'}}^{0, t}\int_{0}^{t}T_{\gamma_{\epsilon'}}^{t', 0} R(\frac{dh}{dt}, \frac{dh}{d\epsilon})u(\epsilon', t') dt' d\epsilon'] ,\]
where $h_t(\cdot)= h(\cdot, t)$. Taking $\epsilon= 1$, $t= 1$, we
find
\[ T_{\gamma_1}(u_0)= T_{\gamma_0}(u_0)+ \int_{0}^{1}\int_{0}^{1} T_{\gamma_{\epsilon}}^{t, 1}R(\frac{dh}{dt}, \frac{dh}{d\epsilon})T_{\gamma_{\epsilon}}^{0, t}(u_0)dtd\epsilon .\]
Using now that $R+ \partial(\omega_2)= 0$, we deduce that
\[ T_{\gamma_1}(u_0)- T_{\gamma_0}(u_0)= [\partial, T_{h}]u_0,\]
where $T_h\in \textrm{Hom}(E_x, E_y)$ is
\[ T_{h}= -\int_{0}^{1} \int_{0}^{1} T_{\gamma_{\epsilon}}^{t, 1}\omega_2(\frac{dh}{dt}, \frac{dh}{d\epsilon})T_{\gamma_{\epsilon}}^{0, t}dtd\epsilon . \]
Note that the expression under the integral is in the $(\epsilon,
t)$-independent vector space $\textrm{Hom}(E_x, E_y)$.
\end{proof}

We would like to mention here the interesting recent work of Igusa \cite{Igusa} where, based on Chen's iterated integrals \cite{Chen}, the author 
constructs a general parallel transport for flat superconnections.

\section{Operations, cohomology and the derived category}
\label{Operations, cohomology and the derived category}

\subsection{Operations and more examples}
\label{Operations, and more examples}

As explained in the appendix, the standard operations on vector
spaces such as
\[ E\mapsto E^*, E\mapsto \Lambda(E), E\mapsto S(E),\]
\[ (E, F)\mapsto E\oplus F, (E, F)\mapsto E\otimes F, (E, F)\mapsto \underline{\textrm{Hom}}(E, F)\]
extend to the setting of graded vector bundles, complexes of vector
bundles and complexes of vector bundles endowed with a connection.
We will see that these operations are also well defined for representations up to homotopy.

\begin{example}[Taking duals]\rm \  For $E\in \RRep^{\infty}(A)$ with associated structure operator $D$,
the operator $D^*$ corresponding to the dual $E^*$ is uniquely
determined by the condition
\[ d_A(\eta\wedge \eta')= D^*(\eta)\wedge \eta'+ (-1)^{|\eta|} \eta\wedge D(\eta'),\]
for all $\eta\in \Omega(A; E^*)$ and $\eta'\in \Omega(A; E)$, where
 $\wedge$ is the operation
\[ \Omega(A; E^*)\otimes \Omega(A; E)\to \Omega(A)\]
induced by the pairing between $E^*$ and $E$ (see Appendix
\ref{The graded setting}). In terms of the components of $D$, if $D=
\partial + \nabla+ \sum_{i\geq 2} \omega_i$, we find $D^*=
\partial^{*} + \nabla^{*}+ \sum_{i\geq 2} \omega_{i}^{*}$, where
$\nabla^*$ is the connection dual to $\nabla$ and, for $\eta_k\in
(E^k)^*$,
\[ \partial^*(\eta)= -(-1)^k\eta\circ \partial,\ \  \omega_{p}^{*}(\alpha_1, \ldots , \alpha_p)(\eta_k)= - (-1)^{k(p+1)} \eta_k\circ \omega_{p}(\alpha_1, \ldots , \alpha_p).\]
In particular, if we start with a representation up to homotopy of
length one, $D= \partial + \nabla+ K$ on $E\stackrel{\partial}{\to}
F$ ($E$ in degree $0$ and $F$ in degree $1$), the dual complex will
be $F^*\stackrel{\partial^*}{\to} E^*$ ($F^*$ in degree $-1$ and
$E^*$ in degree $0$), with $D^*=
\partial^*+ \nabla^*- K^*$. The fact that some signs appear when
taking duals is to be expected since, for any connection $\nabla$,
the curvature of $\nabla^*$ equals the negative of the dual of the
curvature of $\nabla$.
\end{example}

\begin{example}[Tensor products]\rm \  For $E, F\in \RRep^{\infty}(A)$, with associated structure operators
$D^{E}$ and $D^{F}$, the operator $D$ corresponding to $E\otimes F$
is uniquely determined by the condition
\[ D(\eta_1\wedge \eta_2)= D^{E}(\eta_1)\wedge \eta_2+ (-1)^{|\eta_1|} \eta_1\wedge D^{F}(\eta_2),\]
for all $\eta_1\in \Omega(A; E)$ and $\eta_2\in \Omega(A; F)$. More
explicitly, if $D^E= \partial^{E}+ \nabla^{E}+ \omega_{2}^{E}+
\cdots$ and similarly for $D^F$, then $D= \partial+ \nabla+
\omega_2+ \cdots$, where:
\begin{enumerate}
\item $\partial$ is the graded tensor product of $\partial^E$ and $\partial^F$:
$\partial= \partial^E\otimes \textrm{Id}+ \textrm{Id}\otimes
\partial^F$,
\[ \partial (u\otimes v)= \partial^E(u)\otimes v+ (-1)^{|u|}u\otimes \partial^F(v).\]
\item $\nabla$ is just the tensor product connection of $\nabla^E$ and $\nabla^F$: $d_{\nabla}=
d_{\nabla^E}\otimes \textrm{Id}+ \textrm{Id}\otimes d_{\nabla^F}$,
\[ \nabla_{\alpha}(u\otimes v)= \nabla_{\alpha}^E(u)\otimes v+ u\otimes \nabla_{\alpha}^F(v).\]
\item $\omega_p= \omega_{p}^{E}\otimes \textrm{Id}+ \textrm{Id}\otimes \omega_{p}^F$.
\end{enumerate}
\end{example}

\begin{example}[Pull-back]\rm \
A Lie algebroid $A$ over $M$ can be pulled-back along
a submersion $\tau: N\to M$ or, more generally, along smooth maps
$\tau$ which satisfy certain transversality condition, as we now explain. Recall \cite{MH}
that the pull-back algebroid $\tau^{!}A$ has the fiber at $x\in N$:
\[ \tau^{!}(A)_x= \{ (X, \alpha): X\in T_xN, \alpha\in A_{\tau(x)}, (d\tau)(X)= \rho(\alpha)\}.\]
The transversality condition mentioned above is that this is a smooth vector bundle
over $N$, which certainly happens if $\tau$ is a submersion or the
inclusion of a leaf of $A$. The anchor of $\tau^!A$ sends $(X,
\alpha)$ to $X$, while the bracket is uniquely determined by the
derivation rule and
\[ [(X, \tau^{*}\alpha), (Y, \tau^{*}\beta)]= ([X, Y], \tau^{*}[\alpha, \beta]) .\]
In general, there is a pull-back map (functor)
\[ \tau^{*}: \RRep^{\infty}(A)\to \RRep^{\infty}(\tau^{!}(A))\]
which sends $E$ with structure operator $D= \partial+ \nabla+ \sum
\omega_i$ to $\tau^*(E)$ endowed with $D= \partial+ \tau^*(\nabla)+
\sum \tau^{*}(\omega_i)$ where $\tau^*\nabla$ is the pull-back
connection
\[ (\tau^*\nabla)_{(X, \alpha)}(\tau^*(s))= \tau^* (\nabla_{\alpha}(s)),\]
while
\[ \tau^{*}(\omega_i)((X_1, \alpha_1), \ldots , (X_i, \alpha_i))= \omega_i(\alpha_1, \ldots , \alpha_i).\]
\end{example}

\begin{example}[Semidirect products with representations up to homotopy] \rm \
If $\mathfrak{g}$ is a Lie algebra and $V\in
\RRep^{\infty}(\mathfrak{g})$, the operator making $\Lambda(V^*)$ a
representation up to homotopy of $\mathfrak{g}$ is a derivation on
the algebra
\[ \Lambda(\mathfrak{g}^*)\otimes \Lambda(V^*)= \Lambda((\mathfrak{g}\oplus V)^*),\]
i.e. defines the structure of $L_{\infty}$-algebra (see \cite{Sta})
on the direct sum $\mathfrak{g}\oplus V$. This $L_{\infty}$-algebra
deserves the name ``semi-direct product of $\mathfrak{g}$ and
$V$'', and is denoted $\mathfrak{g}\ltimes V$ (it is the usual
semi-direct product if $V$ is just a usual representation).
\end{example}

\begin{example}[Exterior powers of the adjoint representation and $k$-differentials]\rm \ \label{exterior}
In the literature one often encounters equations which look like cocycle conditions, but which do not seem to have a cohomology theory behind them. Such equations arise
naturally as infinitesimal
manifestations of properties of global objects and it
is often useful to interpret them as part of cohomology theories.
We now point out one such example.

An almost $k$-differential (\cite{IG}) on a Lie algebroid $A$ is a
pair of linear maps $\delta: C^{\infty}(M)\to
\Gamma(\Lambda^{k-1}A)$, $\delta: \Gamma(A)\to \Gamma(\Lambda^kA)$
satisfying
\begin{enumerate}
\item[(i)] $\delta(fg)= \delta(f)g+ f\delta(g),$
\item[(ii)] $\delta(f\alpha)= \delta(f)\wedge \delta(\alpha)+ f\delta(\alpha),$
\end{enumerate}
for all $f, g\in C^{\infty}(M)$, $\alpha\in \Gamma(A)$. It is called
a $k$-differential if
\[ \delta [\alpha, \beta]= [\delta(\alpha), \beta] + [\alpha, \delta(\beta)]\]
for all $\alpha, \beta\in \Gamma(A)$.


We will now explain how $k$-differentials are related to representations up to homotopy.
Applying the exterior powers construction to the adjoint
representation, we find new elements:
\[ \Lambda^k\textrm{ad}\in \textrm{Rep}^{\infty}(A),\]
one for each positive integer $k$. These are given by the
representations up to homotopy $\Lambda^k(\textrm{ad}_{\nabla})$,
where $\nabla$ is an arbitrary connection on $A$. Generalizing the
case of the cohomology of $A$ with coefficients in the adjoint
representation $\textrm{ad}$, we now show that the cohomology with
coefficients in $\Lambda^k \textrm{ad}$ can be computed by a
complex which does not require the use of a connection.
More precisely, we define $(C^{\bullet}(A; \Lambda^k\textrm{ad}),
d)$ as follows. An element $c\in C^p(A; \Lambda^k\textrm{ad})$ is a
sequence $c= (c_0, c_1, \ldots )$ where
\[ c_{i}: \underbrace{\Gamma(A)\times \ldots \times \Gamma(A)}_{(p-i)\ \textrm{times}}\to \Gamma(\Lambda^{k- i}A\otimes S^{i}TM),\]
are multilinear, antisymmetric maps related by:
\[ c_i(\alpha_1, \ldots , f\alpha_{p-i})= fc_i(\alpha_1, \ldots , f\alpha_{p-i})+ i(df)(c_{i+1}(\alpha_1, \ldots, \alpha_{p-i-1})\wedge\alpha_{p-i}))\]
where $i(df): S^{i+1}(TM)\to S^i(TM)$ is the contraction by $df$. We
think of $c_1, c_2, \ldots$ as \textit{the tail} of $c_0$, which
measures the failure of $c_0$ to be $C^{\infty}(M)$-linear. For
instance, to define the differential $dc$, we first define its leading term by the Koszul formula
\begin{eqnarray*}
(dc)_0(\alpha_1, \dots ,\alpha_{p+1})&=& \sum_{i<j}(-1)^{i+j} c_{0}([\alpha_i,\alpha_j],\dots,\hat{\alpha_i},
\dots,\hat{\alpha_j},\dots,\alpha_{p+1})+\\
&+&\sum_i(-1)^{i+1}L_{\rho(\alpha_i)}(c_{0}(\alpha_1,\dots,\hat{\alpha_i},\dots,\alpha_{p+1})),
\end{eqnarray*}
and then the tail can be computed by applying the principle we have
mentioned above.  The case $k=1$ corresponds to the deformation complex of $A$.

\begin{proposition}\label{proposition k-differentials} The cohomology $H^{\bullet}(A; \Lambda^k\textrm{ad})$ is naturally isomorphic to
the cohomology of the complex $(C^{\bullet}(A;
\Lambda^k\textrm{ad}),d)$. Moreover, the $1$-cocycles of this
complex are precisely the $k$-differentials on $A$.
\end{proposition}

\begin{proof} For the first part, we pick a connection $\nabla$ to realize the adjoint representation and
we claim that the complexes  $(C^{\bullet}(A;
\Lambda^k\textrm{ad}),d)$ and $(\Omega^{\bullet}(A;
\Lambda^k\textrm{ad}),D_{\nabla})$ are isomorphic. For $k=0$ the
statement is trivial while the case $k=1$ follows from Theorem
\ref{The adjoint representation}. The general statement follows from
these two cases if one observes that both $\oplus_kC^{\bullet}(A;
\Lambda^k\textrm{ad})$ and $\oplus_k\Omega^{\bullet}(A;
\Lambda^k\textrm{ad})$ are algebras generated in low degree for
which the corresponding differentials are derivations. For the
second part, remark that an element in $C^1(A;
\Lambda^k\textrm{ad})$ is a pair $(c_0, c_1)$ where $c_0:
\Gamma(A)\to \Gamma(\Lambda^kA)$ and $c_1\in
\Gamma(\Lambda^{k-1}A\otimes TM)$ satisfy the appropriate equation.
Viewing $c_1$ as the map $C^{\infty}(M)\to \Gamma(\Lambda^{k-1}A)$,
$f\mapsto i(df)(c_1)$, we see that the elements of $C^1(A;
\Lambda^k\textrm{ad})$ are precisely the almost $k$-differentials on
$A$. The fact that the cocycle equation is precisely the
$k$-differential equation follows by a simple computation.
\end{proof}
\end{example}

\begin{example}[The coadjoint representation]\rm \  The dual of the adjoint representation of a Lie algebroid $A$
is called the coadjoint representation of $A$, denoted
$\textrm{ad}^{*}$. Using a connection $\nabla$ on $A$, it is given
by the representation up to homotopy
\[ \textrm{ad}^{*}: \underbrace{T^*M}_{\textrm{degree}\ -1}\stackrel{\rho^*}{\to} \underbrace{A^*}_{\textrm{degree}\ 0},\ \ D= \rho^*+ (\nabla^{\textrm{bas}})^*- (R_{\nabla}^{\textrm{bas}})^*.\]
As in the case of the adjoint representation, the resulting
cohomology can be computed by a complex which does
not require the choice of a connection. This complex, denoted
$C^{\bullet}(A; \textrm{ad}^{*})$, is defined as follows. An element
in $C^{p}(A; \textrm{ad}^{*})$ is a pair $c= (c_0, c_1)$ where $c_0$
is a multilinear antisymmetric map
\[ c_0: \underbrace{\Gamma(A)\times \ldots \times \Gamma(A)}_{p\ \textrm{times}}\to \Omega^1(M),\]
and  $c_1\in \Omega^{p-1}(A; A^*)$ is such that
\[ c_{0}(\alpha_1, \ldots , \alpha_{p-1}, f\alpha_p)= f c_{0}(\alpha_1, \ldots , \alpha_{p-1}, \alpha_p)- df\wedge c_{1}(\alpha_1, \ldots , \alpha_{p-1})(\alpha_p),\]
for all $f\in C^{\infty}(M)$, $\alpha_i\in \Gamma(A)$. The
differential of $c$, $d(c)\in C^{p+1}(A; \textrm{ad}^{*})$ is given
by the formulas
\begin{eqnarray*}
(dc)_0(\alpha_1, \dots ,\alpha_{p+1})&=& \sum_{i<j}(-1)^{i+j} c_{0}([\alpha_i,\alpha_j],\dots,
\hat{\alpha_i},\dots,\hat{\alpha_j},\dots,\alpha_{p+1})+\\
&+&\sum_i(-1)^{i+1}L_{\rho(\alpha_i)}(c_{0}(\alpha_1,\dots,\hat{\alpha_i},\dots,\alpha_{p+1})),
\end{eqnarray*}
\begin{eqnarray*}
(dc)_1(\alpha_1, \dots ,\alpha_{p})&=& \sum_{i<j}(-1)^{i+j} c_{0}([\alpha_i,\alpha_j],\dots,
\hat{\alpha_i},\dots,\hat{\alpha_j},\dots,\alpha_{p})+\\
&+&\sum_i(-1)^{i+1}L_{\rho(\alpha_i)}(c_{0}(\alpha_1,\dots,\hat{\alpha_i},\dots,\alpha_{p}))+
(-1)^{p+1}c(\alpha_1, \ldots ,\alpha_p)\circ \rho .
\end{eqnarray*}

\begin{proposition} $(C^{\bullet}(A; \textrm{ad}^{*}), d)$ is a cochain complex whose cohomology is canonically isomorphic to
$H^{\bullet}(A; ad^{*})$.
\end{proposition}
\end{example}

More generally, for any $q$, the representation up to homotopy $S^q(\textrm{ad}^{*})$ and its cohomology
can be treated similarly. This will be made more explicit in our
discussion on the Weil algebra.

\subsection{Cohomology, the derived category, and some more examples}
\label{Cohomology, and some more examples}

As we already mentioned several times, one of the reasons we work with complexes is
that we want to avoid non-smooth vector bundles. The basic idea was
that a complex represents its cohomology bundle (typically a graded
non-smooth vector bundle). To complete this idea, we need to allow
ourselves more freedom when comparing two complexes so that,
morally, if they have the same cohomology bundles, then they become
\textit{equivalent}. This will happen in the derived category. For a
more general discussion on the derived category of a $DG$ algebra we
refer the reader to \cite{Sch}.

\begin{definition} A morphism $\Phi$ between two representations up to homotopy $E$ and $F$
is called a quasi-isomorphism if the first component of $\Phi$, the
map of complexes $\Phi_0: (E, \partial)\to (F, \partial)$, is a
quasi-isomorphism. We denote by $\DDer(A)$ the category obtained
from $\RRep^{\infty}(A)$ by formally inverting the
quasi-isomorphisms, and by $\Der(A)$ the set of isomorphisms classes
of objects of $\DDer(A)$.
\end{definition}
\begin{remark}[Hom in the derived category]
Since we work with vector bundles, there is the following simple
realization of the derived category. Given two representations up to
homotopy $E$ and $F$ of $A$, there is a notion of homotopy between
maps from $E$ to $F$. To describe it, we remark that morphisms in
$\RRep^{\infty}(A)$ from $E$ to $F$ correspond to $0$-cycles in the
complex with coefficients in the induced representation up to
homotopy $\underline{\textrm{Hom}}(E, F)$:
\[ \textrm{Hom}_{\RRep^{\infty}(A)}(E, F)= Z^0(\Omega(A;\underline{\textrm{Hom}}(E,
F))) .\] Two maps $\Phi, \Psi: E\to F$ in $\RRep^{\infty}(A)$ are
called homotopic if there exists a degree $-1$ map $H: \Omega(A;
E)\to \Omega(A; F)$ which is $\Omega(A)$-linear and satisfies $D_EH+
HD_F= \Phi- \Psi$, where $D_E$ and $D_F$ are the structure operators
of $E$ and $F$, respectively. We denote by $[E, F]$ the set of
homotopy classes of such maps. Hence,
\[ [E, F]:= H^0(\Omega(A; \underline{\textrm{Hom}}(E,
F))).\]

As in the case of complexes of vector bundles (see part (2) of Lemma
\ref{lemma-complexes-1}), and by the same type of arguments, we see
that a map $\Phi: E\to F$ is a quasi-isomorphism if and only if it
is a homotopy equivalence. From this, we deduce the following
realization of $\DDer(A)$: its objects are the representations up to
homotopy of $A$, while
\[ \textrm{Hom}_{\DDer(A)}(E, F)= [E, F].\]
Note that, in this language, for any $F\in \RRep^{\infty}(A)$,
\[ H^n(F)= [\mathbb{R}[n], F].\]
Also, the mapping cone construction gives a function
\[ \textrm{Map}: [E, F]\to \Rep^{\infty}(A)\]
which, when applied to $E= \mathbb{R}[n]$, gives the construction
from Example \ref{Differential forms}. Here, the mapping cone $\textrm{Map}(f)$ of a morphism $f: E\rmap F$ 
between representations up to homotopy, is the representation up to 
homotopy whose associated DGA $\Omega(A; \textrm{Map}(f))$ is the 
mapping cone (in the DG sense [19]) of the map $f$, viewed as a DG map 
from $\Omega(A; E)$ to $\Omega(A; F)$.
\end{remark}

\begin{example}\label{derived-ex}\rm \ If $A= \mathcal{F}\subset TM$ is a foliation on $M$, then the
projection from the complex $\mathcal{F}\hookrightarrow TM$
underlying the adjoint representation into $\nu[-1]$ (the normal
bundle $\nu= TM/\mathcal{F}$ concentrated in degree $1$) is clearly
a quasi-isomorphism of complexes. It is easy to see that this
projection is actually a morphism of representations up to homotopy,
when $\nu$ is endowed with the Bott connection (see Example
\ref{zeroth-examples}). Hence, as expected,
\[ \textrm{ad}_{\mathcal{F}}\cong \nu[1] \ \ \ \ (\textrm{in}\ \ \Der(\mathcal{F})).\]
Similarly, for a transitive Lie algebroid $A$, i.e. one for which
the anchor is surjective,
\[ \textrm{ad}_A\cong \mathfrak{g}(A) \ \ \ \ (\textrm{in}\ \ \Der(A)).\]
\end{example}

Next, we will show that the cohomology $H^{\bullet}(A; -)$, viewed
as a functor from the category of representations up to homotopy,
descends to the derived category.

\begin{proposition}\label{qi's} Any quasi-isomorphism $\Phi: E\to F$ between two representations up to homotopy
of $A$ induces an isomorphism in cohomology $\Phi: H(A; E)\to H(A;
F)$.
\end{proposition}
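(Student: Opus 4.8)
The plan is to reduce the statement to the case of a quasi-isomorphism whose first component $\Phi_0$ is actually a homotopy equivalence, and then to exploit a filtration of $\Omega(A;E)$ by form-degree so that $\Phi$ induces an isomorphism on the associated graded complexes. Concretely, by Lemma \ref{lemma-complexes-1}(2) and the remark preceding this proposition (the realization of $\DDer(A)$), a quasi-isomorphism $\Phi\colon E\to F$ is a homotopy equivalence in $\RRep^{\infty}(A)$: there is a morphism $\Psi\colon F\to E$ together with $\Omega(A)$-linear degree $-1$ maps $H_E$ on $\Omega(A;E)$ and $H_F$ on $\Omega(A;F)$ satisfying $\Psi\Phi - \mathrm{Id}_E = D_E H_E + H_E D_E$ and $\Phi\Psi - \mathrm{Id}_F = D_F H_F + H_F D_F$. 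Since $\Phi$ and $\Psi$ commute with the structure operators and $H_E, H_F$ are chain homotopies between the relevant composites and the identities, passing to cohomology $H^{\bullet}(A;-)$ immediately gives that $\Phi_*$ and $\Psi_*$ are mutually inverse isomorphisms $H(A;E)\cong H(A;F)$. So the real content is the claim, used above, that a quasi-isomorphism of representations up to homotopy is automatically a homotopy equivalence.

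To establish that claim I would filter $\Omega(A;E)=\Gamma(\Lambda A^*\otimes E)$ by form-degree, $F^p\Omega(A;E)=\bigoplus_{i\ge p}\Omega^i(A;E)$. The structure operator $D=\partial+\nabla+\omega_2+\omega_3+\cdots$ respects this filtration since $\partial$ preserves form-degree and $\nabla$, $\omega_i$ raise it. On the associated graded, $D$ reduces to $\partial$ (the terms $d_\nabla$, $\omega_i$ all strictly raise form-degree), so $\mathrm{gr}^p\Omega(A;E)=\Omega^p(A;(E,\partial))$ with differential induced by $\partial$ alone, i.e. the honest complex $\Omega^p(A;E^\bullet)$ with differential $\partial\wedge(-)$. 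A quasi-isomorphism $\Phi$ has $\Phi_0\colon(E,\partial)\to(F,\partial)$ inducing isomorphisms on pointwise cohomology; by Lemma \ref{lemma-complexes-1}(2) $\Phi_0$ is a homotopy equivalence of complexes of vector bundles, hence tensoring with $\Lambda^p A^*$ shows $\mathrm{gr}(\Phi)$ is a homotopy equivalence, in particular a quasi-isomorphism, on each graded piece. One then wants to lift this: since the base manifold gives finite cohomological bounds fiberwise but the form-degree filtration is finite only when $M$ is finite-dimensional — which it is — the spectral sequences of the filtered complexes converge, and an isomorphism on $E_1$ pages gives an isomorphism on cohomology. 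For the sharper statement (homotopy equivalence, not just iso in cohomology) I would instead build $\Psi$ and the homotopies by an explicit obstruction-theoretic induction on form-degree: choose $\Psi_0$ a homotopy inverse to $\Phi_0$ as complexes of vector bundles (Lemma \ref{lemma-complexes-1}(2)), then inductively correct by lower-order terms $\Psi_i\in\Omega^i(A;\underline{\mathrm{Hom}}^{-i}(F,E))$, at each stage solving an equation of the form $\partial(\cdot)+(\text{known})=0$ whose solvability follows because $\partial$ is, up to homotopy via $\Phi_0$, surjective in the relevant degrees, using the contracting-homotopy machinery ($h=\Delta^{-1}\partial^*$) from the proof of Lemma \ref{lemma-complexes-1}.

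The main obstacle is precisely this inductive lifting: ensuring at each step that the cumulative obstruction (built from $d_\nabla$ and the $\omega_i$'s applied to the partial data) is a $\partial$-boundary in $\Omega(A;\underline{\mathrm{Hom}}(F,E))$, and that the induction terminates or converges. Termination is not automatic since representations up to homotopy can have components $\omega_i$ in all degrees $i\le \dim M$, but the form-degree is bounded by $\dim M$, so the induction is finite; the key point to verify carefully is that the obstruction at stage $n$ lands in the image of $\partial$ on the nose, which is where one uses that $\Phi_0$ is a genuine homotopy equivalence (so that the relevant pointwise cohomology vanishes, making $\partial\partial^*+\partial^*\partial$ invertible after passing through $\Phi_0$) rather than merely a cohomology isomorphism. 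Granting this, the composite homotopies assemble into $\Omega(A)$-linear maps $H_E,H_F$ as required, and the first paragraph completes the proof.
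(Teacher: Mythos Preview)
Your proof is correct, and its core---the spectral-sequence argument in your second paragraph---is exactly the paper's proof: filter $\Omega(A;E)$ by form-degree, note that on the associated graded the structure operator reduces to $\partial$ and the morphism reduces to $\Phi_0$, deduce an isomorphism on the $E_1$ page from the fact that $\Phi_0$ is a quasi-isomorphism (hence homotopy equivalence) of complexes of vector bundles, and conclude by convergence of the spectral sequence (the filtration is bounded by the rank of $A$). The paper does precisely this and nothing more.

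The surrounding detour through homotopy equivalence in $\RRep^{\infty}(A)$ is unnecessary for this proposition. You organize the argument as (i) invoke the remark that quasi-isomorphisms in $\RRep^{\infty}(A)$ are homotopy equivalences and conclude immediately, then (ii) set out to justify that remark, obtaining the spectral-sequence argument as a byproduct and then pushing on with an obstruction-theoretic induction for the sharper statement. But the proposition only asks for the cohomology isomorphism, which the spectral sequence already delivers directly; the paper stops there. Your first paragraph is a legitimate alternative route \emph{if} one takes the remark as given, and your third paragraph is a sketch of something strictly stronger than what is being proved---both are fine, but neither is needed, and packaging the direct argument inside them obscures the fact that you already have the proof in hand.
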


\begin{proof} If $E$ is a representation up to homotopy of  $A$,
one can form a decreasing filtration on $\Omega (A;E)$ induced by
the form-degree
\begin{equation*}
 \cdots \subset F^2(\Omega(A;E))\subset
F^1(\Omega(A,E))\subset F^0(\Omega(A;E))=\Omega(A;E)
\end{equation*}
where
\begin{equation*}
F^p(\Omega(A;E))=\Omega^p(A;E)\oplus \Omega^{p+1}(A;E) \oplus \cdots
\end{equation*}
This filtration induces a spectral sequence $\mathcal{E}$ with
\begin{equation*}
\mathcal{E}^{p,q}_0 = \Omega^p(A; E^q)\Rightarrow H^{p+q}(A;E),
\end{equation*}
where the differential $d^{p,q}_{0}: \mathcal{E}^{pq}_{0}\to
\mathcal{E}^{p, q+1}_{0}$ is induced by the differential $\partial$
of $E$. Given a morphism $\Phi: E\to F$, one has an induced map of
spectral sequences which, at the first level, is induced by $\Phi^0$.
Hence, the assumption that $\Phi$ is a quasi-isomorphism implies
that the map induced at the level of spectral sequences is an
isomorphism at the second level. We deduce that $\Phi$ induces an
isomorphism in cohomology.
\end{proof}

We will now look at the case where $E$ is a regular representation up
to homotopy of $A$ in the sense that the underlying complex $(E,
\partial)$ is regular. In this case, the cohomology
$\mathcal{H}^{\bullet}(E)$ is a graded vector bundle over $M$ (see
Appendix \ref{section-complexes}). The following theorem is an
application of homological perturbation theory (see e.g. \cite{Cra1,GLS}).

\begin{theorem}\label{rep-regular} Let $E$ be a regular representation up to homotopy of $A$.
Then the equation
\begin{equation*}
\overline{\nabla}_{\alpha}([S]):=[\nabla_{\alpha}(S)]
\end{equation*}
for $\alpha\in \Gamma(A)$ and $[S]\in \Gamma( \mathcal{H}^{\bullet}(E))$
makes $\mathcal{H}^{\bullet}(E)$ a representation of $A$. Moreover,
the complex $(\mathcal{H}^{\bullet}(E),0)$ with connection
$\overline{\nabla}$ can be given the structure of a representation
up to homotopy of $A$ which is quasi-isomorphic to $E$.

Also, there is a spectral sequence
\begin{equation*}
\mathcal{E}_2^{pq}=H^p(A;{\mathcal{H}}^q(E))\Rightarrow
H^{p+q}(A,E).
\end{equation*}
\end{theorem}

\begin{proof} The fact that
$\overline{\nabla}$ is flat follows from the fact that the curvature
of $\nabla$ is exact. The spectral sequence is the one
appearing in the previous proof. Next, we have to construct the
structure of representation up to homotopy on
$\mathcal{H}^{\bullet}(E)$. We will use the notations from the proof
of Lemma \ref{lemma-complexes-1} (part 3). The linear Hodge
decomposition $E=ker{\Delta}+im(b)+im(b^*)$ provides
quasi-isomorphisms $p:E\rightarrow {\mathcal{H}}(E)$  and
$i:{\mathcal{H}}(E) \rightarrow E$. The restriction of the Laplacian
to $im(\partial) \oplus im(\partial^*)$, denoted  $\diamondsuit$, is
an isomorphism. Then
\begin{equation*}
h=-\diamondsuit ^{-1}\partial^*
\end{equation*}
satisfies the following equations:
\begin{enumerate}
\item $p\partial = 0,$
\item $\partial i= 0$,
\item $ip=\textrm{Id}+h\partial+\partial h,$
\item $h^2=0,$
\item $ph=0.$
\end{enumerate}
We denote by the same letters the maps induced at the level of
forms, for instance $\partial$ goes from $\Omega(A; E)$ to $\Omega(A; E)$.
We recall that here we use the standard sign conventions. Note that these maps
still satisfy the previous equations. We consider
\[ \delta:= D- \partial: \Omega(A; E)\to \Omega(A; E),\]
where $D$ is the structure operator of $A$. With these,
\[ D_{\mathcal{H}}:=p(1+(\delta h)+(\delta h)^2+(\delta h)^3+\cdots)\delta i : \Omega(A; \mathcal{H})\to\mathrm{} \Omega(A; \mathcal{H}) \]
is a degree one operator which squares to zero, and
\[ \Phi:= p(1+\delta h +(\delta h)^2+ (\delta h)^3 + \cdots): (\Omega(A; E), D)\to (\Omega(A; \mathcal{H}), D_{\mathcal{H}})\]
is a cochain map. The assertions about $D_{\mathcal{H}}$ and $\Phi$
follow by direct computation, or by applying the homological
perturbation lemma (see e.g. \cite{Cra1}), which also explains our
conventions. We now observe that the equations $h^2=0$ and $ph=0$
imply that $\Phi$ is $\Omega(A)$-linear and that $D_{\mathcal{H}}$
is a derivation.
\end{proof}

\begin{example}\rm \ Applying this result to the first quasi-isomorphism discussed
in Example \ref{derived-ex},
we find that the deformation cohomology of a foliation $\mathcal{F}$
is isomorphic to the shifted cohomology of $\mathcal{F}$ with
coefficients in $\nu$- and this is Proposition 4 in \cite{CM}.
\end{example}

\begin{example}[Serre representations]\rm \ Any extension of Lie algebras
\[ \mathfrak{l}\to \tilde{\mathfrak{g}}\to \mathfrak{g}\]
induces a representation up to homotopy of $\mathfrak{g}$ with
underlying complex the Chevalley-Eilenberg complex
$(C^{\bullet}(\mathfrak{l}), d_{\mathfrak{l}})$ of $\mathfrak{l}$.
To describe this representation, we use a splitting $\sigma:
\mathfrak{g}\to \tilde{\mathfrak{g}}$ of the sequence. This induces
\begin{enumerate}
\item For $u\in \mathfrak{g}$, a degree zero operator
\[ \nabla_{u}^{\sigma}:= ad^{*}_{\sigma(u)}: C^{\bullet}(\mathfrak{l})\to C^{\bullet}(\mathfrak{l}),\]
hence a $\mathfrak{g}$-connection $\nabla^{\sigma}$ on
$C^{\bullet}(\mathfrak{l})$.
\item The \textit{curvature} of $\sigma$, $R_{\sigma}\in C^2(\mathfrak{g}; \mathfrak{l})$ given by
\[ R^{\sigma}(u, v)= [\sigma(u), \sigma(v)]- \sigma([u, v]).\]
To produce an
$\underline{\textrm{End}}^{-1}(C^{\bullet}(\mathfrak{l}))$-valued
cochain, we use the contraction operator $i: \mathfrak{l}\to
\underline{\textrm{End}}^{-1}(C^{\bullet}(\mathfrak{l}))$ (but mind
the sign conventions!).
\end{enumerate}
It is now straightforward to check that
\[ D:= d_{\mathfrak{l}}+ \nabla^{\sigma}+ i(R^{\sigma})\]
makes $C^{\bullet}(\mathfrak{l})$ a representation up to homotopy of
$\mathfrak{g}$, with associated cohomology complex isomorphic to
$C^{\bullet}(\tilde{\mathfrak{g}})$.

Note that Theorem \ref{rep-regular} implies that the cohomology of
$\mathfrak{l}$ is naturally a representation of $\mathfrak{g}$, and
there is spectral sequence with
\[ \mathcal{E}_{2}^{p, q}= H^p(\mathfrak{g}; H^q(\mathfrak{l}))\Longrightarrow H^{p+q}(\tilde{\mathfrak{g}}).\]
This is precisely the content of the Serre spectral sequence (see
e.g. \cite{Weibel}).

The same argument applies to any extension of Lie algebroids, giving
us the spectral sequence of Theorem 7.4.6 in \cite{McK}.
\end{example}

In the case of representations of length $1$, we obtain the
following.

\begin{corollary}\label{exact sequence}
If $(E,D)$ is a representation up to homotopy of $A$
 with underlying  regular complex $E=E^0 \oplus E^1$ then there is a long exact
sequence
\begin{equation*}
\cdots \rightarrow H^n(A;{\mathcal{H}}^0(E))\rightarrow
H^n(A;E)\rightarrow H^{n-1}(A;{\mathcal{H}}^1(E)) \rightarrow
H^{n+1}(A;{\mathcal{H}}^0(E))\rightarrow \cdots
\end{equation*}
\end{corollary}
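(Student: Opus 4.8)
The plan is to deduce Corollary \ref{exact sequence} as an immediate consequence of Theorem \ref{rep-regular}. Since $E$ is a representation up to homotopy of $A$ whose underlying complex $E = E^0 \oplus E^1$ is regular, Theorem \ref{rep-regular} provides a spectral sequence
\[ \mathcal{E}_2^{pq} = H^p(A; \mathcal{H}^q(E)) \Rightarrow H^{p+q}(A; E). \]
The key observation is that the complex $(E, \partial)$ is concentrated in the two degrees $0$ and $1$, so $\mathcal{H}^q(E) = 0$ unless $q \in \{0, 1\}$. Hence the $\mathcal{E}_2$-page of the spectral sequence has only two nonzero rows, $q = 0$ and $q = 1$.

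The standard fact is that a first-quadrant (or, here, two-row) spectral sequence with only two adjacent nonzero rows degenerates into a long exact sequence. First I would note that the only possibly nonzero differential on the $\mathcal{E}_2$-page is
\[ d_2: \mathcal{E}_2^{p, 1} = H^p(A; \mathcal{H}^1(E)) \to \mathcal{E}_2^{p+2, 0} = H^{p+2}(A; \mathcal{H}^0(E)), \]
since any $d_r$ with $r \geq 3$ shifts the row index by $r \geq 3$ and so must vanish (the target or source lies outside the two nonzero rows). Therefore $\mathcal{E}_3 = \mathcal{E}_\infty$. Next I would identify the two filtration pieces of $H^n(A; E)$: the filtration on $H^n(A; E)$ coming from the form-degree filtration used in the proof of Proposition \ref{qi's} has associated graded $\mathcal{E}_\infty^{n, 0} \oplus \mathcal{E}_\infty^{n-1, 1}$. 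This gives a short exact sequence
\[ 0 \to \mathcal{E}_\infty^{n,0} \to H^n(A; E) \to \mathcal{E}_\infty^{n-1, 1} \to 0, \]
where $\mathcal{E}_\infty^{n,0} = \operatorname{coker}(d_2: H^{n-2}(A; \mathcal{H}^1(E)) \to H^n(A; \mathcal{H}^0(E)))$ and $\mathcal{E}_\infty^{n-1,1} = \ker(d_2: H^{n-1}(A; \mathcal{H}^1(E)) \to H^{n+1}(A; \mathcal{H}^0(E)))$.

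Splicing these short exact sequences together along the map $d_2$ (which plays the role of the connecting homomorphism) yields exactly the claimed long exact sequence
\[ \cdots \to H^n(A; \mathcal{H}^0(E)) \to H^n(A; E) \to H^{n-1}(A; \mathcal{H}^1(E)) \xrightarrow{d_2} H^{n+1}(A; \mathcal{H}^0(E)) \to \cdots \]
This is a purely formal manipulation with the two-row spectral sequence, so there is no real obstacle; the only point requiring a small amount of care is checking that the connecting map has the correct degree (it raises the total degree by $1$: from $H^{n-1}(A; \mathcal{H}^1(E))$, with total degree $(n-1) + 1 = n$, to $H^{n+1}(A; \mathcal{H}^0(E))$, with total degree $n+1$), which matches the displayed $d_2$ above. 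Alternatively, one could avoid the spectral-sequence language altogether: using Theorem \ref{rep-regular} to replace $E$ by the quasi-isomorphic representation up to homotopy on $(\mathcal{H}^\bullet(E), 0)$ with connection $\overline\nabla$, one gets a length-one representation up to homotopy with zero differential $\partial$, i.e. a pair of honest representations $\mathcal{H}^0(E)$, $\mathcal{H}^1(E)$ together with a $2$-form $K \in \Omega^2(A; \operatorname{Hom}(\mathcal{H}^1(E), \mathcal{H}^0(E)))$ with $d_{\overline\nabla}K = 0$ (see Remark \ref{rk-length-1}); the associated complex $\Omega(A; E)$ is then just the cone of the map $\wedge K: \Omega^{\bullet - 2}(A; \mathcal{H}^1(E)) \to \Omega^\bullet(A; \mathcal{H}^0(E))$ of complexes, and the long exact sequence is the usual cohomology long exact sequence of a mapping cone, with connecting map induced by $K$. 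Either route gives the result; I would present the spectral-sequence argument for brevity, since the spectral sequence is already in hand from Theorem \ref{rep-regular}.
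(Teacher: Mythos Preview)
Your proposal is correct and follows essentially the same approach as the paper: both use the two-row spectral sequence from Theorem \ref{rep-regular}, observe that it collapses at the third page, and read off the long exact sequence. The paper additionally identifies the three maps explicitly (inclusion of cochains, projection $(\omega_0+\omega_1)\mapsto\overline{\omega_1}$, and connecting map given by wedge with $\omega_2$), which matches your identification of the connecting map with $K$ in your alternative mapping-cone description.
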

\begin{proof}
The map
\begin{equation*}
H^{n-1}(A;\mathcal{H}^1(E)) \rightarrow H^{n+1}(A;\mathcal{H}^0(E))
\end{equation*}
 is $d_2^{pq}:E_2^{p,1} \rightarrow E_2^{p+2,0}$ in the spectral sequence and is given
by the wedge product with $\omega_2$. The map
\begin{equation*}
H^n(A;\mathcal{H}^0(E))\rightarrow H^n(A;E)
\end{equation*}
is determined by the natural inclusion at the level of cochains.\\
The morphism
\begin{equation*}
H^n(A;E) \rightarrow H^{n-1}(A;\mathcal{H}^1(E))
\end{equation*}
 is given at the level of complexes by:
\begin{equation*}
(\omega_0+ \omega_1) \mapsto \overline{\omega_1}
\end{equation*}
where $\omega_0 \in \Omega^n(A;E^0)$ and $\omega_1 \in
\Omega^{n-1}(A,E^1)$. Since the spectral sequence collapses at the
third stage, we conclude that the sequence is exact.
\end{proof}

\begin{example}\rm \ 
Assume that $A$ is a regular Lie algebroid. We apply Theorem \ref{rep-regular} to the adjoint representation. The
resulting representations in the cohomology are $\mathfrak{g}(A)$ in degree zero and $\nu(A)$ in degree one (see Example \ref{zeroth-examples}). 
Hence, $\textrm{ad}_{\nabla}$ is quasi-isomorphic to a representation up to homotopy on $\mathfrak{g}(A)\oplus \nu(A)[-1]$
where each term is a representation and the differential vanishes, but the $\omega_2$-term (which depends on the connection) does not vanish in general. On the other hand, the previous corollary gives us the 
long exact sequence (\ref{les-def}) (Theorem 3 in \cite{CM}) and we see that the boundary operator $\delta$ is
induced by $\omega_2$.
\end{example}


\section{The Weil algebra  and the BRST model for equivariant cohomology}
\label{The Weil algebra}

In this section we will make use of representations up to homotopy
to introduce the Weil algebra associated to a Lie algebroid $A$,
generalizing the Weil algebra of a Lie algebra and Kalkman's BRST
algebra for equivariant cohomology. Here we give an explicit description
of the Weil algebra which makes use of a connection. An intrinsic description
(without the choice of a connection) is given in \cite{AC2}; 
the price to pay is that the Weil algebra has to be defined as a certain algebra 
of differential operators instead of sections of a vector bundle.

Let $A$ be a Lie algebroid over the manifold $M$ and let 
$\nabla$ be a connection on the vector bundle $A$. Define the algebra
\[ W(A, \nabla)= \bigoplus_{u, v, w} \Gamma(\Lambda^uT^*M\otimes S^{v}(A^*)\otimes \Lambda^{w}(A^*)).\]
It is graded by the total degree $u+ 2v+ w$, and has an underlying
bi-grading $p= v+ w, q= u+v$, so that $W^{p, q}(A, \nabla)$ is the
sum over all $u,v$ and $w$ satisfying these equations. The
connection $\nabla$ will be used to define a differential on $W(A,
\nabla)$. This will be the sum of two differentials
\[ d_{\nabla}= d_{\nabla}^{\textrm{hor}}+ d_{\nabla}^{\textrm{ver}},\]
where, to define $d_{\nabla}^{\textrm{hor}}$ and
$d_{\nabla}^{\textrm{ver}}$ we look at $W(A, \nabla)$ from two
different points of view.

First of all recall that, according to our conventions, the
symmetric powers of the dual of the graded vector bundle
$\mathcal{D}= A\oplus A$ (concentrated in degrees $0$ and $1$)  is
\[ S^{p}\mathcal{D}^*= (\underbrace{\Lambda^pA^*}_{\textrm{degree}\ -p})\oplus (\underbrace{A^*\otimes \Lambda^{p-1}A^*}_{\textrm{degree}\ -p+1})\oplus \ldots \oplus (\underbrace{S^{p-1}A^*\otimes A^*}_{\textrm{degree}\ -1})\oplus (\underbrace{S^{p}A^*}_{\textrm{degree}\ 0}),\]
hence
\[ W^{p, q}(A, \nabla)= \Omega(M; S^{p}\mathcal{D}^*)^{q-p}.\]
We now use the representation up to homotopy structure induced by
$\nabla$ on the double $\mathcal{D}$ of $A$ (see Example \ref{The
double of a vector bundle}), which we dualize and extend to
$S\mathcal{D}^*$. The resulting structure operator will be our
vertical differential
\[ d_{\nabla}^{\textrm{ver}}: W^{p, q}(A, \nabla)\to W^{p, q+1}(A, \nabla).\]

Similarly, for the coadjoint complex $\textrm{ad}^*$ one has
\[ S^{q}\textrm{ad}^{*}= (\underbrace{\Lambda^qT^*M}_{\textrm{degree}\ -q})\oplus (\underbrace{A^*\otimes \Lambda^{q-1}T^*M}_{\textrm{degree}\ -q+1})\oplus \ldots \oplus (\underbrace{T^*M\otimes S^{q-1}A^*}_{\textrm{degree}\ -1})\oplus (\underbrace{S^{q}A^*}_{\textrm{degree}\ 0}),\]
hence
\[ W^{p, q}(A, \nabla)= \Omega(A; S^{q}\textrm{ad}^{*})^{p- q}.\]
We now use the connection $\nabla$ to form the coadjoint
representation $\textrm{ad}_{\nabla}^{*}$. To obtain a horizontal
operator which commutes with the vertical one, we consider the
conjugation of the coadjoint representation, i.e. $\textrm{ad}^{*}$
with the structure operator $-\rho^*+ (\nabla^{\textrm{bas}})^*+
(R_{\nabla}^{\textrm{bas}})^*$. The symmetric powers will inherit a
structure operator, and this will be our horizontal differential
\[ d_{\nabla}^{\textrm{hor}}: W^{p, q}(A, \nabla)\to W^{p+1, q}(A, \nabla) .\]

\begin{proposition} Endowed with $d_{\nabla}^{\textrm{hor}}$ and $d_{\nabla}^{\textrm{ver}}$,
$W(A, \nabla)$ becomes a differential bi-graded algebra whose
cohomology is isomorphic to the cohomology of $M$. Moreover, up to
isomorphisms of differential bi-graded algebras, $W(A, \nabla)$ does
not depend on the choice of the connection $\nabla$.
\end{proposition}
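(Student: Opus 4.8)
The plan is to prove the three assertions separately: that $(W(A,\nabla),d_{\nabla}^{\textrm{hor}},d_{\nabla}^{\textrm{ver}})$ is a differential bi-graded algebra, that the cohomology of its total complex is $H^{\bullet}(M)$, and that it is independent of $\nabla$ up to isomorphism of such algebras.

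For the algebra structure, the point is that each of the two operators is the structure operator of a representation up to homotopy, extended to symmetric powers: $d_{\nabla}^{\textrm{ver}}$ comes from $S(\mathcal D^{*})$, where $\mathcal D=A\oplus A$ is the double of the vector bundle $A$ made a representation up to homotopy of $TM$ by $\nabla$ (Example~\ref{The double of a vector bundle}) and then dualized, and $d_{\nabla}^{\textrm{hor}}$ from $S(\overline{\textrm{Ad}^{*}})$, the symmetric algebra of the conjugate of the coadjoint representation, regarded as one of $A$. By Proposition~\ref{Rep-long} and the discussion of operations on representations up to homotopy, each of these squares to zero and is a graded derivation of the graded algebra $W(A,\nabla)$, of bidegree $(0,1)$ and $(1,0)$ respectively; that the product respects the bigrading is immediate. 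What remains is the anticommutation $d_{\nabla}^{\textrm{hor}}d_{\nabla}^{\textrm{ver}}+d_{\nabla}^{\textrm{ver}}d_{\nabla}^{\textrm{hor}}=0$. Their graded commutator is again a derivation, so it is enough to check that it vanishes on a generating set of $W(A,\nabla)$, namely on $C^{\infty}(M)$, on $\Omega^{1}(M)$, on the degree-one copy $\Gamma(\Lambda^{1}A^{*})$ and on the degree-two copy $\Gamma(S^{1}A^{*})$. This is a direct computation from the explicit formulas of the two differentials, and the identities it requires are exactly the compatibilities between $\rho$, $R_{\nabla}$, $\nabla^{\textrm{bas}}$ and $R^{\textrm{bas}}_{\nabla}$ recorded in Proposition~\ref{properties-basic}, together with the jet identity $R^{\textrm{bas}}_{\nabla}(\alpha,\beta)=j^{\nabla}([\alpha,\beta])-[j^{\nabla}(\alpha),j^{\nabla}(\beta)]$. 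I expect this verification to be the main obstacle; everything else is formal bookkeeping.

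For the cohomology, I would filter $W(A,\nabla)$ decreasingly by the first bidegree $p$. Since $d_{\nabla}^{\textrm{hor}}$ raises $p$ by one while $d_{\nabla}^{\textrm{ver}}$ preserves it, the associated spectral sequence has $d_{0}=d_{\nabla}^{\textrm{ver}}$, hence $E_{1}^{p,\bullet}=H^{\bullet}(TM;S^{p}\mathcal D^{*})$ (with the degree shift inherent in $W^{p,q}=\Omega(M;S^{p}\mathcal D^{*})^{q-p}$). Now $\mathcal D=(A\stackrel{\textrm{Id}}{\to}A)$ is a contractible complex, so $\mathcal D^{*}$ is contractible and therefore so is every symmetric power $S^{p}\mathcal D^{*}$ with $p\geq1$ (a contracting homotopy of $\mathcal D^{*}$ induces one on $S^{p}\mathcal D^{*}$ by the Leibniz rule), while $S^{0}\mathcal D^{*}=\mathbb{R}$. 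By Proposition~\ref{qi's} (or directly Proposition~\ref{representation exact complex}), $H^{\bullet}(TM;S^{p}\mathcal D^{*})=0$ for $p\geq1$, and $H^{\bullet}(TM;\mathbb{R})=H^{\bullet}(M)$. Thus $E_{1}$ is concentrated in the column $p=0$, all higher differentials vanish for degree reasons, and the total cohomology of $(W(A,\nabla),d_{\nabla})$ equals $H^{\bullet}(M)$; the filtration is bounded in each total degree, so there is no convergence issue. Equivalently, the inclusion of $\Omega^{\bullet}(M)=W^{0,\bullet}$ is a quasi-isomorphism onto the total complex.

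For independence of the connection, let $\nabla'$ be a second connection on $A$, so that $\nabla-\nabla'\in\Omega^{1}(M;\underline{\textrm{End}}(A))$ is a tensor. Example~\ref{The double of a vector bundle} gives a canonical isomorphism $\mathcal D_{A,\nabla}\cong\mathcal D_{A,\nabla'}$ of representations up to homotopy of $TM$ with components $\Phi^{0}=\textrm{Id}$ and $\Phi^{1}(X)=\nabla_{X}-\nabla'_{X}$, and Theorem~\ref{The adjoint representation} gives a canonical isomorphism $\textrm{Ad}_{\nabla}\cong\textrm{Ad}_{\nabla'}$ of representations up to homotopy of $A$ with components $\Phi_{0}=\textrm{Id}$, $\Phi_{1}(\alpha)X=\nabla_{X}(\alpha)-\nabla'_{X}(\alpha)$, which dualizes to $\textrm{Ad}^{*}_{\nabla}\cong\textrm{Ad}^{*}_{\nabla'}$. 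Extending these to symmetric powers and to forms, the first induces an algebra automorphism of $W$ intertwining $d_{\nabla}^{\textrm{ver}}$ with $d_{\nabla'}^{\textrm{ver}}$, and the second one intertwining $d_{\nabla}^{\textrm{hor}}$ with $d_{\nabla'}^{\textrm{hor}}$. The crucial observation is that the two automorphisms agree: both fix $C^{\infty}(M)$, $\Omega^{1}(M)$ and the $\Gamma(\Lambda^{1}A^{*})$-variables, and both send a $\Gamma(S^{1}A^{*})$-variable to itself plus the contraction of the \emph{same} tensor $\nabla-\nabla'$ against the corresponding $\Gamma(\Lambda^{1}A^{*})$-variable, so one only has to match the sign conventions. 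The resulting common automorphism $\Theta\colon W(A,\nabla)\to W(A,\nabla')$ then intertwines the total differentials $d_{\nabla}$ and $d_{\nabla'}$ and, being an algebra isomorphism of bidegree $(0,0)$, is the required isomorphism of differential bi-graded algebras.
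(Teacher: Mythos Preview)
Your argument for the first two assertions---that the anticommutator of the two operators is a derivation and hence vanishes once checked on the generators $C^{\infty}(M)$, $\Omega^{1}(M)$, $\Gamma(\Lambda^{1}A^{*})$, $\Gamma(S^{1}A^{*})$, and that the columns $p\geq 1$ are $d_{\nabla}^{\textrm{ver}}$-acyclic because $S^{p}\mathcal D^{*}$ is a contractible complex---is exactly what the paper does. (Proposition~\ref{properties-basic} already contains everything needed for the commutation; the jet identity is not required here.)

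The genuine difference is in the independence of $\nabla$. The paper does \emph{not} construct a direct isomorphism $W(A,\nabla)\cong W(A,\nabla')$; instead it postpones the statement to the next subsection, where an intrinsic (connection-free) Weil algebra $W(A)$ is defined and each $W(A,\nabla)$ is shown to be isomorphic to it. Your route is more self-contained: you use the explicit isomorphisms $\mathcal D_{A,\nabla}\cong\mathcal D_{A,\nabla'}$ and $\textrm{Ad}_{\nabla}\cong\textrm{Ad}_{\nabla'}$, push them to the symmetric algebras, and observe that the two resulting algebra automorphisms of $W$ coincide because they agree on generators (only $\mu$ is moved, and by the same tensor $\nabla-\nabla'$ in both cases). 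This is correct and has the advantage of avoiding the intrinsic model entirely; the trade-off is that one must verify the coincidence of the two automorphisms on $\mu$ carefully, keeping track of the dualization and inversion signs, whereas the paper's detour through $W(A)$ makes the independence automatic and simultaneously yields an object of independent interest.
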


\begin{proof} To prove that the two differentials commute, one first observes that it is enough to check
the commutation relation on functions and on sections of $T^*M$,
$\Lambda^1A^*$ and $S^1A^*$, which generate the entire algebra. This
follows by direct computation (one can also use the local formulas
below, but the computations are much more involved). The
independence of $\nabla$ can be deduced from the fact that, up to isomorphisms, the adjoint 
representation and the double of a vector bundle do not depend on the 
choice of a connection. Alternatively, it follows from the intrinsic 
description of the Weil algebra \cite{AC2} and an argument similar to the one 
in the proof of
Proposition 4.6 (see Proposition 3.9 in {\it loc.cit}). We
need to prove that the cohomology equals that of $M$. Note that for
$p>0$ the column $(W^{p,\bullet},d_{\nabla}^{\textrm{ver}})$ is
acyclic, since it corresponds to an acyclic representation up to
homotopy of $TM$. Next, we note that the first column
$(W^{0,\bullet},d_{\nabla}^{\textrm{ver}})$ is the De-Rham complex
of $M$ and the usual spectral sequence argument provides the desired
result.
\end{proof}

\begin{remark}[Local coordinates] \rm \ Since the operators are local, it is possible to look at their expressions in coordinates.
Let us assume that we are over a chart $(x_a)$ of $M$ on which we
have a trivialization $(e_i)$ of $A$. Over this chart, the Weil
algebra will be the bi-graded commutative algebra over the space of
smooth functions, generated by elements $\partial^a$ of bidegree
$(0, 1)$ ($1$-forms), elements $\theta^i$ of bi-degree $(1, 0)$ (the
dual basis of $(e_i)$, viewed in $\Lambda^1A^*$), and elements
$\mu^i$ of bi-degree $(1, 1)$ (the dual basis of $(e_i)$, viewed in
$S^1A^*$).  A careful but straightforward computation shows that, on
these elements:

\begin{eqnarray*}
d_{\nabla}^{\textrm{ver}}(\partial^a) & =&  0,\\
d_{\nabla}^{\textrm{ver}}(\theta^i) & =& \mu^i-  \Gamma_{aj}^{i}\partial^a\theta^j,\\
d_{\nabla}^{\textrm{ver}}(\mu^i)&=& -  \Gamma_{aj}^{i} \partial^a\mu^j+ \frac{1}{2} r_{abj}^{i}\partial^a\partial^b\theta^j,\\
d_{\nabla}^{\textrm{hor}}(\partial^a)&=& -  \rho^{a}_{i} \mu^i+  (\frac{\partial \rho^{a}_{i}}{\partial x_b}- \Gamma_{bi}^{j}\rho_{j}^{a})\theta^i\partial^b,\\
d_{\nabla}^{\textrm{hor}}(\theta^i)&=& - \frac{1}{2} c^{i}_{jk} \theta^j\theta^k,\\
d_{\nabla}^{\textrm{hor}}(\mu^i)&=& -  (c_{jk}^{i}+ \sum_a
\rho^{a}_{k}\Gamma_{aj}^{i})\theta^j\mu^k+ \frac{1}{2} R_{jka}^{i}
\theta^j\theta^k\partial^a.
\end{eqnarray*}
Here we use Einstein's summation convention. The functions $\rho^{a}_{i}$ are
the coefficients of $\rho$, $c^{i}_{jk}$ are the structure functions
of $A$, $r_{abj}^{i}$ are the coefficients of the curvature of
$\nabla$ and $R_{jka}^{i}$ are the coefficients of the basic
curvature:
\[ \rho(e_i)= \sum \rho^{a}_{i}\partial_a,\ \ [e_j, e_k]= \sum c^{i}_{jk} e_i,\]
\[ R_{\nabla}(\partial_a, \partial_b)e_j= r_{abj}^{i} e_i,\ \  R^{\textrm{bas}}_{\nabla}(e_j, e_k)\partial_a= R_{jka}^{l} e_l.\]
Note that for a smooth function $f$,
\[ d_{\nabla}^{\textrm{ver}}(f)=  \partial_{a}(f) \partial^a, \ \ d_{\nabla}^{\textrm{hor}}(f)=  \partial_{a}(f) \rho^{a}_{i} \omega^i.\]
Also, in case that the connection $\nabla$ is flat, all the $\Gamma$
and $r$-terms above vanish, while the $R$-terms are given by the
partial derivatives of the structure functions $c_{jk}^{i}$.
\end{remark}

\begin{example}[The standard Weil algebra] In the case of Lie algebras $\mathfrak{g}$,
one can immediately see that we recover the Weil algebra
$W(\frak{g})$. In particular, the local coordinates description
becomes
\begin{eqnarray*}
d^{\textrm{ver}}(\theta^i) & = & \mu^i,\\
d^{\textrm{ver}}(\mu^i) & = & 0,\\
d^{\textrm{hor}}(\theta^i) &=&-\frac{1}{2}\sum_{j, k}c^i_{jk}\theta^j\theta^k,\\
d^{\textrm{hor}}(\mu^i)&=&-\sum_{j, k}c^i_{jk}\theta^j\mu^k,
\end{eqnarray*}
which is the usual Weil algebra \cite{Cartan}. In this case,
$d^{\textrm{ver}}$ is usually called the Koszul differential,
denoted $d_{K}$, $d^{\textrm{hor}}$ is called the Cartan
differential, denoted $d_{C}$, and the total differential is denoted
by $d_{W}$.
\end{example}

\begin{example}[The BRST algebra]
Recall Kalkman's BRST algebra \cite{Kalk2} associated to a
$\frak{g}$-manifold $M$. It is $W(\frak{g}, M): = W(\frak{g})\otimes
\Omega(M)$ with differential:
\begin{equation*}
\delta=d_W\otimes 1 + 1\otimes
d_{DR}+\sum_{a=1}^n\theta^a\otimes{\mathcal{L}}_a-\sum_{b=1}^n\mu^b\otimes
\iota_b.
\end{equation*}
\end{example}

\begin{proposition}\label{Weil and BRST} Let $A= \mathfrak{g}\ltimes M$ be the action
algebroid associated to a $\mathfrak{g}$-manifold $M$. Then
\[ W(\mathfrak{g}, M)= W(A; \nabla^{\textrm{flat}}),\]
where $\nabla^{\textrm{flat}}$ is the canonical flat connection on
$A$.
\end{proposition}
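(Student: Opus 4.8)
The plan is to identify $W(A,\nabla^{\textrm{flat}})$ and $W(\mathfrak{g},M)$ as bigraded algebras and then to verify that their differentials agree by evaluating both on a generating set. Since the action algebroid $A=\mathfrak{g}\ltimes M$ is the trivial bundle $M\times\mathfrak{g}$, its dual is $M\times\mathfrak{g}^{*}$, so
\[ \Gamma(\Lambda^{u}T^{*}M\otimes S^{v}A^{*}\otimes\Lambda^{w}A^{*})=\Omega^{u}(M)\otimes S^{v}(\mathfrak{g}^{*})\otimes\Lambda^{w}(\mathfrak{g}^{*}),\]
and summing over $u,v,w$ identifies $W(A,\nabla^{\textrm{flat}})$ with $\Omega(M)\otimes W(\mathfrak{g})=W(\mathfrak{g},M)$ as bigraded commutative algebras: the total degree $u+2v+w$ is the total degree on $W(\mathfrak{g})\otimes\Omega(M)$, and the bidegree $(p,q)=(v+w,u+v)$ is the usual horizontal/vertical splitting. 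Writing $\nabla=\nabla^{\textrm{flat}}$, what remains is to show that the total differential $d=d^{\textrm{hor}}+d^{\textrm{ver}}$ of $W(A,\nabla)$ coincides with Kalkman's $\delta$.

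Both operators are derivations of this bigraded algebra: for $d$ this is automatic, since $d^{\textrm{ver}}$ and $d^{\textrm{hor}}$ are structure operators of (symmetric powers of) representations up to homotopy and hence satisfy the graded derivation rule; for $\delta$ it follows from a short Koszul-sign count showing that each of $d_{W}\otimes 1$, $1\otimes d_{DR}$, $\sum_{a}\theta^{a}\otimes\mathcal{L}_{a}$ and $-\sum_{b}\mu^{b}\otimes\iota_{b}$ is a derivation of $W(\mathfrak{g})\otimes\Omega(M)$. So it suffices to check $d=\delta$ on a set of algebra generators, which we take to be the functions $f\in C^{\infty}(M)$, the de Rham $1$-forms $\partial^{a}$, the degree-$1$ generators $\theta^{i}\in\Lambda^{1}\mathfrak{g}^{*}$, and the degree-$2$ generators $\mu^{i}\in S^{1}\mathfrak{g}^{*}$.

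For $d$ I would use the local formulas from the Local coordinates remark, specialized to the present case: in the frame $(e_{i})$ of $M\times\mathfrak{g}$ coming from a basis of $\mathfrak{g}$, the canonical flat connection has vanishing Christoffel symbols and curvature ($\Gamma=0$, $r=0$), and since the structure functions $c^{i}_{jk}$ of an action algebroid are the constant structure constants of $\mathfrak{g}$, the basic curvature coefficients $R^{i}_{jka}=\partial_{a}(c^{i}_{jk})$ vanish as well. The formulas then reduce to $d^{\textrm{ver}}(\partial^{a})=0$, $d^{\textrm{ver}}(\theta^{i})=\mu^{i}$, $d^{\textrm{ver}}(\mu^{i})=0$, $d^{\textrm{ver}}(f)=\partial_{a}(f)\partial^{a}$, and $d^{\textrm{hor}}(\partial^{a})=-\rho^{a}_{i}\mu^{i}+\partial_{b}(\rho^{a}_{i})\theta^{i}\partial^{b}$, $d^{\textrm{hor}}(\theta^{i})=-\tfrac{1}{2} c^{i}_{jk}\theta^{j}\theta^{k}$, $d^{\textrm{hor}}(\mu^{i})=-c^{i}_{jk}\theta^{j}\mu^{k}$, $d^{\textrm{hor}}(f)=\rho^{a}_{i}\partial_{a}(f)\theta^{i}$. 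For $\delta$, evaluating on the same generators uses only Cartan calculus for the fundamental vector fields $\hat{e}_{i}=\rho(e_{i})$ of the action, namely $\mathcal{L}_{\hat{e}_{i}}(f)=\rho^{a}_{i}\partial_{a}(f)$, $\iota_{\hat{e}_{i}}(\partial^{a})=\rho^{a}_{i}$ and $\mathcal{L}_{\hat{e}_{i}}(\partial^{a})=d(\rho^{a}_{i})$, together with $d_{W}\theta^{i}=\mu^{i}-\tfrac{1}{2} c^{i}_{jk}\theta^{j}\theta^{k}$ and $d_{W}\mu^{i}=-c^{i}_{jk}\theta^{j}\mu^{k}$. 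A term-by-term comparison then gives the equality, the bidegree-$(0,1)$ parts matching via $d^{\textrm{ver}}\leftrightarrow d_{K}\otimes 1+1\otimes d_{DR}$ and the bidegree-$(1,0)$ parts via $d^{\textrm{hor}}\leftrightarrow d_{C}\otimes 1+\sum_{a}\theta^{a}\otimes\mathcal{L}_{a}-\sum_{b}\mu^{b}\otimes\iota_{b}$.

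I expect the only genuinely delicate point to be the bookkeeping of Koszul signs: both in checking that the twisted operators $\sum_{a}\theta^{a}\otimes\mathcal{L}_{a}$ and $\sum_{b}\mu^{b}\otimes\iota_{b}$ really are derivations of $W(\mathfrak{g})\otimes\Omega(M)$ (the signs cancel precisely because one sums over the index shared between the $W(\mathfrak{g})$- and $\Omega(M)$-factors), and in matching the mixed term $\partial_{b}(\rho^{a}_{i})\theta^{i}\partial^{b}$ appearing in $d^{\textrm{hor}}(\partial^{a})$ against $\sum_{c}\theta^{c}\,\mathcal{L}_{\hat{e}_{c}}(\partial^{a})$. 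Everything else is a routine comparison of the displayed formulas.
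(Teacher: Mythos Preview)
Your proposal is correct and follows exactly the approach the paper takes: the paper's proof is the single sentence ``Follows immediately from the local coordinates description of the differentials of the Weil algebra,'' and you have carried out precisely that verification in detail, specializing the local formulas to $\Gamma=r=R^{\textrm{bas}}=0$ and comparing with Kalkman's $\delta$ on generators.
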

\begin{proof}
Follows immediately from the local coordinates description of the
differentials of the Weil algebra.
\end{proof}

\appendix
\section{Appendix}
\subsection{The graded setting}
\label{The graded setting}

Here we collect some general conventions and constructions of graded algebra. As a
general rule, we will be constantly using the \textit{standard sign
convention}: whenever two graded objects $x$ and $y$, say of degrees
$p$ and $q$, are interchanged, one introduces the sign $(-1)^{pq}$.
For instance, the standard commutator $xy- yx$ is replaced by the
graded commutator
\[ [x, y]= xy- (-1)^{pq} yx.\]
Throughout the appendix, $M$ is a fixed manifold and all  vector
bundles are over $M$.

\begin{free}{\bf Graded vector bundles.} \rm \ By a graded vector bundle
over $M$ we mean a vector bundle $E$ together with a direct sum
decomposition indexed by integers:
\[ E= \bigoplus_n E^n.\]
An element $v\in E^n$ is called a homogeneous element of degree $n$
and we write $|v|= n$. Most of the constructions on graded vector
bundles follow by applying point-wise the standard constructions with
graded vector spaces. Here are some of them.
\begin{enumerate}
\item Given two graded vector bundles $E$ and $F$, their direct sum and their tensor product
have natural gradings. On $E\otimes F$ we always use the total
grading:
\[  \textrm{deg}(e\otimes f)= \textrm{deg}(e)+ \textrm{deg}(f).\]
\item Given two graded vector bundles $E$ and $F$ one can also
form the new graded space $\underline{\textrm{Hom}}(E, F)$. Its
degree $k$ part, denoted $\underline{\textrm{Hom}}^k(E, F)$,
consists of vector bundle maps $T: E\to F$ which increase the degree
by $k$.  When $E= F$, we use the notation
$\underline{\textrm{End}}(E)$.
\item For any graded vector bundle, the associated tensor algebra bundle $T(E)$ is graded by the total degree
\[  \textrm{deg}(v_1\otimes \ldots \otimes v_n)=  \textrm{deg}(v_1)+ \ldots + \textrm{deg}(v_n).\]
The associated symmetric algebra bundle $S(E)$ is defined
(fiber-wise) as the quotient of $T(E)$ by forcing $[v, w] = 0$ for
all $v, w\in E$, while for the exterior algebra bundle $\Lambda(E)$
one forces the relations $vw= - (-1)^{pq}wv$ where $p$ and $q$ are
the degrees of $v$ and $w$, respectively.
\item The dual $E^*$ of a graded vector bundle is graded by
\[ (E^*)^n= (E^{-n})^*.\]
\end{enumerate}
\end{free}

\begin{free}{\bf Wedge products.}\rm \ We now discuss wedge products in the graded context. First of all, given
a Lie algebroid $A$ and a graded vector bundle $E$, the space of
$E$-valued $A$-differential forms, $\Omega(A; E)$, is graded by the
total degree:
\begin{equation*}
\Omega(A;E)^p=\bigoplus_{i+j=p}\Omega^i(A;E^j).
\end{equation*}
Wedge products arise in the following general situation. Assume that
$E$, $F$ and $G$ are graded vector bundles and
\[ h: E\otimes F\to G\]
is a degree preserving vector bundle map. Then there is an induced
wedge-product operation
\[ \Omega(A; E)\times \Omega(A; F)\to \Omega(A; G),\ (\omega, \eta)\mapsto \omega\wedge_{h} \eta.\]
Explicitly, for  $\omega\in \Omega^p(A; E^i)$, $\eta\in \Omega^q(A;
F^j)$, $\omega\wedge_h \eta\in \Omega^{p+q}(A; G^{i+j})$ is given by
\[ (\alpha_1,\dots , \alpha_{p+q})\mapsto \sum (-1)^{qi} \textrm{sgn}(\sigma)
h(\omega(\alpha_{\sigma(1)}, \ldots , \alpha_{\sigma(p)}),
\eta(\alpha_{\sigma(p+1)}\ldots , \alpha_{\sigma(p+q)})),\] where
the sum is over all $(p-q)$-shuffles. 
Here is a list of the wedge products that will appear in this paper:
\begin{enumerate}
\item If $h$ is the identity we get:
$$\cdot \wedge\cdot : \Omega(A; E)\otimes \Omega(A; F)\to \Omega(A; E\otimes F).$$
In particular, we get two operations
\begin{equation}
\label{tensor} \Omega(A)\otimes \Omega(A; E)\to \Omega(A; E),\
\Omega(A; E)\otimes \Omega(A)\to \Omega(A; E).
\end{equation}
which make $\Omega(A; E)$ into a (graded) $\Omega(A)$-bimodule. Note
that, while the first one coincides with the wedge product applied
to $E$ viewed as a (ungraded) vector bundle, the second one involves
a sign.
\item If $h$ is the composition of endomorphisms of $E$ we get an operation
\begin{equation}
\label{composition} \cdot \circ \cdot: \Omega(A,
\underline{\textrm{End}}(E))\otimes \Omega(A;
\underline{\textrm{End}}(E))\to \Omega(A,
\underline{\textrm{End}}(E))
\end{equation}
which gives $\Omega(A, \underline{\textrm{End}}(E))$ the structure
of a graded algebra. Of course, this operation makes sense for
general $\underline{\textrm{Hom}}$'s instead of
$\underline{\textrm{End}}$.
\item If $h$ is the evaluation map $\textrm{ev}: \underline{\textrm{End}}(E)\otimes E \to E,\
(T, v)\mapsto T(v)$, we get:
\begin{equation}
\label{left} \cdot \wedge \cdot: \Omega(A;
\underline{\textrm{End}}(E)) \otimes \Omega(A; E)\rightarrow
\Omega(A; E),
\end{equation}
while when $h$ is the twisted evaluation map
$\overline{\textrm{ev}}: E\otimes \underline{\textrm{End}}(E)\to E,
\ (v, T)\mapsto (-1)^{|v||T|} T(v)$, we get:
\begin{equation}
\label{right} \cdot \wedge \cdot: \Omega(A; E)\otimes \Omega(A;
\underline{\textrm{End}}(E))\to \Omega(A; E).
\end{equation}
These operations make $\Omega(A; E)$ a graded $\Omega(A;
\underline{\textrm{End}}(E))$-bimodule.
\item If $h: \Lambda^{\bullet} E\otimes \Lambda^{\bullet}E\to \Lambda^{\bullet} E$ is the multiplication,
we get
$$\cdot \wedge \cdot: \Omega(A; \Lambda^{\bullet} E)\otimes
\Omega(A;\Lambda^{\bullet} E)\to \Omega(A; \Lambda^{\bullet} E)$$
which makes $\Omega(A; \Lambda^{\bullet} E)$ a graded algebra.
\end{enumerate}

Note that the ring $\Omega(A; \underline{\textrm{End}}(E))$ can be
identified with the space of endomorphisms of the left
$\Omega(A)$-module $\Omega(A; E)$ (in the graded sense). More
precisely, we have the following simple lemma.

\begin{lemma}\label{lemma-graded} There is a 1-1 correspondence between degree $n$ elements of $\Omega(A; \underline{\textrm{End}}(E))$ and operators
$F$ on $\Omega(A; E)$ which increase the degree by $n$ and which are
$\Omega(A)$-linear in the graded sense:
\[ F(\omega\wedge \eta)= (-1)^{n |\omega|} \omega\wedge F(\eta)  \ \ \ \forall\ \omega\in \Omega(A), \eta\in \Omega(A; E).\]
Explicitly, $T\in \Omega(A; \underline{\textrm{End}}(E))$ induces
the operator $\hat{T}$ given by:
\[ \hat{T}(\eta)= T\wedge \eta.\]
\end{lemma}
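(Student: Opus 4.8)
The statement is a bookkeeping lemma identifying degree-$n$ elements $T \in \Omega(A;\underline{\textrm{End}}(E))$ with graded-$\Omega(A)$-linear degree-$n$ operators $\hat T$ on $\Omega(A;E)$. The plan is to exhibit the two directions of the correspondence explicitly and check they are mutually inverse. First I would fix the candidate map one way: given $T$, define $\hat T(\eta) = T \wedge \eta$ using the left action \eqref{left} of $\Omega(A;\underline{\textrm{End}}(E))$ on $\Omega(A;E)$; since $\wedge$ raises total degree additively and $T$ has degree $n$, $\hat T$ raises degree by $n$. The graded $\Omega(A)$-linearity $\hat T(\omega \wedge \eta) = (-1)^{n|\omega|}\omega \wedge \hat T(\eta)$ is then just the associativity of the wedge products together with the sign rule recorded in the preceding paragraph — more precisely, it follows from the fact that $\Omega(A;E)$ is a graded $\Omega(A;\underline{\textrm{End}}(E))$-bimodule over the graded algebra $\Omega(A)$, so that $T \wedge (\omega \wedge \eta) = (T \wedge \omega)\wedge \eta$ and moving $\omega$ of degree $|\omega|$ past $T$ of degree $n$ produces exactly $(-1)^{n|\omega|}$.

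For the reverse direction, suppose $F$ is a degree-$n$ operator on $\Omega(A;E)$ satisfying the graded $\Omega(A)$-linearity condition. Since $\Omega(A;E) = \bigoplus_{i,j}\Omega^i(A;E^j)$ and any element of $\Omega^i(A;E^j)$ is, locally, an $\Omega(A)$-linear combination of sections of $E^j$ (wedged with forms in $\Omega^i(A)$), the $\Omega(A)$-linearity of $F$ reduces the problem to understanding $F$ on $\Gamma(E) = \Omega^0(A;E)$. The key point is that $F$ is $C^\infty(M)$-linear on $\Gamma(E)$ — this is the $n=0$-form, $|\omega|=0$ case of the hypothesis — so $F|_{\Gamma(E)}$ is a bundle map $E \to \Lambda A^* \otimes E$ raising the $E$-degree by $n$ when one separates out the form part; equivalently, it is an element $T \in \Omega(A;\underline{\textrm{End}}(E))$ of degree $n$. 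One then checks $\widehat{T} = F$: both are $\Omega(A)$-linear in the graded sense and agree on $\Gamma(E)$, hence on all of $\Omega(A;E)$ by the density/generation argument. Finally $T \mapsto \hat T \mapsto (\hat T|_{\Gamma(E)}) = T$ closes the loop, and injectivity of $T \mapsto \hat T$ follows since $T$ is recovered from $\hat T$ by restriction to $\Gamma(E)$.

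**The main obstacle.** The only genuinely delicate point is the sign. One must verify that the sign $(-1)^{n|\omega|}$ in the linearity condition is the one produced by the left wedge action \eqref{left} built from the evaluation map $\textrm{ev}$, and not, say, a twisted variant. This is handled by the general sign computation in the ``Wedge products'' paragraph: the left action $\Omega(A;\underline{\textrm{End}}(E)) \otimes \Omega(A;E) \to \Omega(A;E)$ is associative over $\Omega(A)$ acting on the left with no extra sign, so pulling a scalar form $\omega$ of degree $|\omega|$ from the right of $T$ to its left costs $(-1)^{n|\omega|}$ since $T$ has degree $n$ — consistent with the claimed formula. A secondary (routine but not automatic) point is the ``generation'' claim that graded-$\Omega(A)$-linearity together with the values on $\Gamma(E)$ determines the operator; this is a standard sheaf-theoretic / partition-of-unity argument, or can be phrased via the evaluation of a form on tuples of sections of $A$, and I would only sketch it rather than belabor it.
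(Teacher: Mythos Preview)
Your proposal is correct and is the natural argument; the paper itself states this lemma without proof (it is introduced as a ``simple lemma''), so there is nothing to compare against. The two directions you describe---sending $T$ to left-wedging by $T$, and recovering $T$ from $F$ by restricting to $\Gamma(E)$ and using $C^{\infty}(M)$-linearity---together with the generation of $\Omega(A;E)$ as an $\Omega(A)$-module by $\Gamma(E)$, constitute exactly the standard verification.
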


There is one more interesting operation of type
$\wedge_{h}$, namely the one where $h$ is the graded commutator
\[ h: \underline{\textrm{End}}(E)\otimes \underline{\textrm{End}}(E)\to \underline{\textrm{End}}(E),  \ \ h(T, S)= T\circ S- (-1)^{|S||T|} S\circ T.\]
The resulting operation
\[ \Omega(A, \underline{\textrm{End}}(E))\otimes \Omega(A; \underline{\textrm{End}}(E))\to \Omega(A; \underline{\textrm{End}}(E))\]
will be denoted by $[- , -]$. Note that:
\[ [T, S]= T\wedge S- (-1)^{|T||S|}S\wedge T .\]
\end{free}

\subsection{Complexes of vector bundles}
\label{section-complexes}

Here we bring together some
rather standard constructions and facts about complexes of vector bundles.

{\bf Complexes.}\rm \ By a complex over $M$ we mean a cochain
complex of vector bundles over $M$, i.e. a graded vector bundle $E$
endowed with a degree one endomorphism $\partial$
satisfying $\partial^2= 0$:
\[ (E, \partial): \ \ \cdots \stackrel{\partial}{\to}E^{0}\stackrel{\partial}{\to} E^1\stackrel
{\partial}{\to} E^2\stackrel{\partial}{\to} \cdots \] We drop
$\partial$ from the notation whenever there is no danger of
confusion. A morphism between two complexes $E$ and $F$ over $M$ is
a vector bundle map $f: E\to F$ which preserves the degree and is
compatible with the differentials. We denote by $\textrm{Hom}(E, F)$
the space of all such maps. We denote by
$\underline{\textrm{Ch}}(M)$ the resulting category of complexes
over $M$.

\begin{definition} We say that a complex $(E, \partial)$ over $M$ is regular if $\partial$ has constant rank.
In this case one defines the cohomology of $E$ as the graded vector
bundle over $M$:
\[ \mathcal{H}^{\bullet}(E):= \textrm{Ker}(\partial)/\textrm{Im}(\partial).\]
\end{definition}

\begin{remark} Note that $\mathcal{H}^{\bullet}(E)$ only makes sense (as a vector bundle) when $E$ is regular.
On the other hand, one can always take the point-wise cohomology: for
each $x \in M$, there is a cochain complex of vector spaces $(E_x,
\partial_x)$ and one can take its cohomology $H^{\bullet}(E_x,
\partial_x)$. The dimension of these spaces may vary as $x$ varies,
and it is constant if and only if $E$ is regular, in which case they
fit into a graded vector bundle over $M$- and that is $
\mathcal{H}^{\bullet}(E)$.
\end{remark}

As for cochain complexes of vector spaces, we have the following
terminology:
\begin{enumerate}
\item Given two complexes of vector bundles $E$ and $F$ and morphisms $f_1, f_2: E\to F$,
a homotopy between $f$ and $g$ is a degree $-1$ map $h: E \to F$
satisfying
\[ h\partial + \partial h= f_1- f_2.\]
If such an $h$ exists, we say that $f_1$ and $f_2$ are homotopic.
\item A morphisms $f: E\to F$ between two complexes of vector bundles $E$ and $F$ is called
a homotopy equivalence if there exists a morphisms $g: F\to E$ such
that $f\circ g$ and $g\circ f$ are homotopic to the identity maps.
If such an $f$ exists, we say that $E$ and $F$ are homotopy
equivalent. We say that $E$ is contractible if it is homotopy
equivalent to the zero-complex or, equivalently, if there exists a
homotopy between $\textrm{Id}_E$ and the zero map.

\item A morphism $f: E\to F$ between two complexes of vector bundles is called a quasi-isomorphism
if it induces isomorphism in the point-wise cohomologies. We say that
$E$ is acyclic if it is point-wise acyclic.
\end{enumerate}

\begin{lemma}\label{lemma-complexes-1} For complexes of vector bundles over $M$:
\begin{enumerate}
\item[(1)] If $f: E\to F$ is a quasi-isomorphism at $x\in M$, then it is a quasi-isomorphism in
a neighborhood of $x$. In particular, if a complex $E$ is exact at
$x\in M$, then it is exact in a neighborhood of $x$.
\item[(2)] A morphism $f: E\to F$ is a quasi-isomorphism if and only if it is a homotopy equivalence.
In particular, a complex $E$ is acyclic if and only if it is
contractible.
\item[(3)] If a complex $E$ is regular then it is homotopy equivalent to its cohomology
$\mathcal{H}^{\bullet}(E)$ endowed with the zero differential.
\end{enumerate}
\end{lemma}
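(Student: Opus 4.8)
The plan is to reduce everything to pointwise-linear-algebra facts, made uniform in $x$ by exploiting the constant-rank phenomenon. I would organize the proof around part (2) as the technical heart, then deduce (1) and (3) essentially as corollaries.

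\medskip

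\emph{Part (1).} First I would observe that being a quasi-isomorphism is, for a fixed morphism $f$, a condition on the ranks of the bundle maps $\partial^E$, $\partial^F$ and $f$ together with the induced maps on kernels/images. The mapping cone $\mathrm{Cone}(f)$ is a complex over $M$, and $f$ is a quasi-isomorphism at $x$ iff $\mathrm{Cone}(f)_x$ is an acyclic complex of vector spaces, i.e. iff $\partial^{\mathrm{Cone}(f)}$ has, at $x$, the maximal possible rank in each degree (half the dimension of $(\mathrm{Cone}(f))^n_x\oplus(\mathrm{Cone}(f))^{n+1}_x$ in the appropriate bookkeeping). Since the rank of a vector bundle map is lower semicontinuous, maximality of rank at $x$ persists in a neighborhood; so it suffices to prove the ``in particular'' statement, that exactness of a complex $E$ at $x$ is an open condition. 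This is the same semicontinuity argument applied directly: exactness at $x$ in degree $n$ means $\mathrm{rank}(\partial^{n-1}_x)+\mathrm{rank}(\partial^{n}_x)=\dim E^n_x$, and $\mathrm{rank}(\partial^{n-1})+\mathrm{rank}(\partial^{n})\le \dim E^n$ always, with equality an open condition. A small subtlety: I must run this on finitely many degrees at once, but $E$ has finite amplitude locally (it is a finite-rank bundle with a decomposition into finitely many nonzero pieces near $x$), so no issue.

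\medskip

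\emph{Part (2).} One implication is formal: a homotopy equivalence induces isomorphisms on pointwise cohomology, since a pointwise homotopy between maps of complexes of vector spaces induces equality on cohomology. The substantive direction is: a quasi-isomorphism is a homotopy equivalence. Here the strategy is to first handle the case where $E$ itself is acyclic and show it is contractible, and then bootstrap. Concretely: if $E$ is acyclic, then by part (1) it is acyclic on all of $M$; I want to produce a degree $-1$ map $h$ with $h\partial+\partial h=\mathrm{Id}$. Locally this is standard homological algebra — split each $\partial^n$ using a connection/metric to get $E^n\cong \mathrm{Im}(\partial^{n-1})\oplus \mathrm{Im}(\partial^n)$ (using acyclicity) and let $h$ be the inverse of $\partial$ on the $\mathrm{Im}(\partial^n)$ summand, zero elsewhere — then patch with a partition of unity. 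One must check that a partition-of-unity average of contracting homotopies is still a contracting homotopy: since $h\partial+\partial h=\mathrm{Id}$ is affine in $h$, $\sum_i\phi_i h_i$ works because $\sum_i\phi_i=1$. For the general quasi-isomorphism $f:E\to F$, pass to the mapping cone $\mathrm{Cone}(f)$, which is acyclic, hence (by the case just proved) contractible; a contraction of the cone unwinds, by the usual cone calculus, into a homotopy inverse $g:F\to E$ together with the two homotopies witnessing $fg\simeq\mathrm{Id}$, $gf\simeq\mathrm{Id}$. I expect \emph{this} step — extracting the homotopy inverse and homotopies from a contraction of the cone, keeping all the signs straight in the graded/bundle setting — to be the main bookkeeping obstacle, though it is entirely formal once the acyclic case is in hand.

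\medskip

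\emph{Part (3).} If $E$ is regular, then $\mathrm{Ker}(\partial)$, $\mathrm{Im}(\partial)$ are subbundles of constant rank, so using a metric (or connection) on $E$ one gets a smooth splitting $E^n=\mathrm{Im}(\partial^{n-1})\oplus \mathcal{H}^n(E)\oplus C^n$ with $\partial$ mapping $C^n$ isomorphically onto $\mathrm{Im}(\partial^n)$. With respect to such a splitting the projection $E\to\mathcal{H}^\bullet(E)$ and inclusion $\mathcal{H}^\bullet(E)\to E$ are morphisms of complexes (the latter with zero differential on $\mathcal{H}^\bullet(E)$), and the composite on $E$ differs from the identity by $h\partial+\partial h$ where $h$ inverts $\partial|_{C^n}$ and is zero on the other summands; the composite on $\mathcal{H}^\bullet(E)$ is literally the identity. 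Hence $E\simeq \mathcal{H}^\bullet(E)$ as claimed. (Alternatively, (3) follows from (2): the projection $E\to\mathcal{H}^\bullet(E)$ is a quasi-isomorphism, so a homotopy equivalence — but I would give the direct splitting argument since it also supplies the explicit data.)
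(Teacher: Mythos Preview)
Your proposal is correct and follows the same overall strategy as the paper: both reduce parts (1) and (2) to their ``in particular'' special cases via the mapping cone, observing that $\mathrm{Cone}(f)$ is acyclic (respectively contractible) precisely when $f$ is a quasi-isomorphism (respectively a homotopy equivalence). The execution of the special cases differs, however. The paper introduces a single tool --- a metric on each $E^i$ and the associated Laplacian $\Delta = \partial\partial^* + \partial^*\partial$ --- and uses it uniformly for all three parts: exactness at $x$ is equivalent to $\Delta_x$ being invertible, which is manifestly an open condition; when $E$ is acyclic the global contracting homotopy is simply $h = \Delta^{-1}\partial^*$; and for (3) the linear Hodge decomposition $E = \ker\Delta \oplus \mathrm{Im}(\partial) \oplus \mathrm{Im}(\partial^*)$ identifies $\mathcal{H}^\bullet(E) \cong \ker\Delta$ and the projection is the desired quasi-isomorphism. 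Your arguments --- rank semicontinuity for (1), locally constructed contractions glued by a partition of unity for (2), and an ad hoc metric splitting for (3) --- are more elementary and entirely valid (your observation that $h\partial + \partial h = \mathrm{Id}$ is affine in $h$, with $\partial$ being $C^\infty(M)$-linear, is exactly what makes the gluing work). The trade-off is that the paper's Laplacian approach is more unified and delivers an explicit global formula for the contracting homotopy without any patching, which is also convenient later in the paper (the same $h$ and Hodge data reappear in the proof of Theorem~\ref{rep-regular}).
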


\begin{proof} For (1) and (2), it suffices to prove the apparently weaker statements in the lemma
coming after ``in particular''. This follows from the standard
mapping complex argument: given a morphism $f$, one builds a double
complex with $E$ as $0$-th row, $F$ as $1$-st row, and $f$ as
vertical differential. The resulting double complex $M(f)$, has the
property that it is acyclic, or contractible, if and only if $f$ is
a quasi-isomorphism, or a homotopy equivalence, respectively \cite{Weibel}.

To prove the weaker statements of (1) and (2), we fix a complex $(E,
\partial)$ and we choose a metric in each vector bundle $E^i$.
Denote $\partial^*$ the adjoint of $\partial$ with respect to the
chosen metric and $$\Delta =
\partial \partial^*+ \partial^* \partial$$ the correspondent ``Laplacian''. It is
not difficult to see that the complex $(E^{\bullet}_x,\partial)$ is
exact if and only if  $\Delta_x$ is an isomorphism. Since the
isomorphisms form an open set in the space of linear
transformations, we get (1). When $(E,\partial)$ is exact, a simple
computation shows that $h:=  \Delta^{-1}\partial^*$ is a contracting
homotopy for $E$, proving (2).

For (3) a linear version of Hodge decomposition gives us
$$E=Ker(\Delta)\oplus Im(\partial)\oplus Im(\partial^*)$$ and an
identification $\mathcal{H}^{\bullet}(E)=Ker(\Delta)$. The resulting
projection $E\rightarrow Ker(\Delta)$ is a quasi-isomorphism.
\end{proof}

{\bf Operations.}\rm \ The operations with graded vector bundles discussed in the
previous section extend to the setting of complexes. In other words,
if $E$ and $F$ are complexes over $M$, then all the associated
graded vector bundles $S(E)$, $\Lambda(E)$, $E^*$,
$\underline{\textrm{Hom}}(E, F)$, $E\otimes F$,  inherit an operator
$\partial$ making them into complexes over $M$. The induced
differentials are defined by requiring that they satisfy the
(graded) derivation rule, written formally as:
\[ \partial (xy)= \partial(x)y+ (-1)^{|x|} x\partial(y).\]
For instance, for $E\otimes F$,
\[ \partial(v\otimes w)= \partial(v)\otimes w+ (-1)^{|v|} v\otimes \partial(w).\]
Also, for $T\in \underline{\textrm{Hom}}(E, F)$,
\[ \partial(T(v))= \partial(T)(v)+ (-1)^{|T|}T(\partial(v)),\]
in terms of graded commutators:
\[ \partial(T)= \partial\circ T- (-1)^{|T|} T\circ \partial= [\partial, T].\]

If $E$ is a complex over $M$, its differential $\partial$ induces a
differential $\partial$ on $\Omega(A; E)$ defined by:
\[ \partial(\eta)= \partial\wedge \eta.\]
Explicitly, for $\eta\in \Omega^p(A; E^k)$, $\partial(\eta)\in
\Omega^p(A; E^{k+1})$ is given by
\[(\alpha_1, \ldots, \alpha_p)\mapsto (-1)^p \partial(\eta(\alpha_1, \ldots, \alpha_p)).\]
The following simple lemma shows that the various differentials
induced on $\Omega(A; \underline{\textrm{End}}(E))$ coincide.

\begin{lemma} For any $T\in \Omega(A; \underline{\textrm{End}}(E))$,
\[ \partial(T)= [\partial, T]= \partial \wedge T- (-1)^{|T|}T\wedge \partial .\]
\end{lemma}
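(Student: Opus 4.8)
The statement asserts that three a priori different operators on $\Omega(A; \underline{\textrm{End}}(E))$ agree: the operator $\partial$ coming from the complex structure on $\underline{\textrm{End}}(E)$ (defined by $\partial(T) = \partial \wedge T$, where here $\partial$ on the right is the differential of the complex $\underline{\textrm{End}}(E)$), the graded commutator $[\partial, T]$ with the differential $\partial$ of $E$ itself, and the explicit expression $\partial \wedge T - (-1)^{|T|} T \wedge \partial$. The first thing I would do is pin down precisely what each of these means. The differential on the complex $\underline{\textrm{Hom}}(E,F)$ — hence on $\underline{\textrm{End}}(E)$ — was defined in the ``Operations'' paragraph by the graded Leibniz rule: for $\phi \in \underline{\textrm{End}}(E)$, $\partial(\phi) = [\partial_E, \phi] = \partial_E \circ \phi - (-1)^{|\phi|} \phi \circ \partial_E$. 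So the content of the lemma is really that the operator induced on $\Omega(A; \underline{\textrm{End}}(E))$ by this fibrewise differential, via the recipe $T \mapsto \partial \wedge T$ (from the displayed formula just above the lemma, applied with $E$ replaced by $\underline{\textrm{End}}(E)$), coincides with the $\Omega(A)$-module-theoretic commutator $\partial \wedge T - (-1)^{|T|} T \wedge \partial$, where now the wedge products are those of $(\ref{left})$ and $(\ref{right})$ — i.e.\ $T$ acting on $\Omega(A; E)$ and $\partial$ (the differential of $E$, viewed as an element of $\Omega^0(A; \underline{\textrm{End}}(E))$) acting.

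**Key steps.**
First I would reduce to the case of a homogeneous element $T \in \Omega^p(A; \underline{\textrm{End}}^k(E))$, since all operators in sight are additive. Next, I would compute $\partial(T) = \partial \wedge T$ explicitly on arguments $(\alpha_1, \ldots, \alpha_p)$: by the displayed formula preceding the lemma, applied to the complex $\underline{\textrm{End}}(E)$ with differential $\phi \mapsto [\partial_E, \phi]$, this is $(\alpha_1, \ldots, \alpha_p) \mapsto (-1)^p [\partial_E, T(\alpha_1, \ldots, \alpha_p)] = (-1)^p \big( \partial_E \circ T(\alpha_1, \ldots, \alpha_p) - (-1)^k T(\alpha_1, \ldots, \alpha_p) \circ \partial_E \big)$. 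Then I would compute the other side, $\partial \wedge T - (-1)^{|T|} T \wedge \partial$, using the explicit formulas $(\ref{left})$ for $\partial \wedge T$ and $(\ref{right})$ for $T \wedge \partial$. Here $\partial$ is the degree-one element of $\Omega^0(A; \underline{\textrm{End}}(E))$, so the shuffle sum in the wedge-product formula degenerates: the $0$-form $\partial$ eats no arguments and $T$ eats all $p$ of them, so $(\partial \wedge T)(\alpha_1, \ldots, \alpha_p) = (-1)^{pk}\partial_E \circ T(\alpha_1, \ldots, \alpha_p)$ from the sign $(-1)^{qi}$ in the wedge formula (with $q = p$, $i = k$), using the evaluation map $\textrm{ev}$; and $(T \wedge \partial)(\alpha_1, \ldots, \alpha_p) = (-1)^{|v||T|} T(\alpha_1, \ldots, \alpha_p) \circ \partial_E$ coming from the twisted evaluation $\overline{\textrm{ev}}$, with the vector-side element $v$ of degree... — and here I must be careful: $\partial$ carries internal degree $1$ but form-degree $0$. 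Matching the total-degree conventions ($|T| = p + k$, $|\partial| = 1$ as an element of $\Omega(A;\underline{\textrm{End}}(E))$) and chasing the signs $(-1)^p$, $(-1)^{pk}$, $(-1)^k$, $(-1)^{|T|}$ through both expressions, the two sides coincide.

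**Main obstacle.**
The only real difficulty is the sign bookkeeping — there are at least three layers of signs interacting (the Koszul sign in the definition of $\partial$ on $\Omega(A; E)$, the $(-1)^{qi}$ in the definition of $\wedge_h$, and the sign $(-1)^{|v||T|}$ built into the twisted evaluation $\overline{\textrm{ev}}$), and one must consistently distinguish the internal degree $k$ of $T$ from its total degree $p+k$ as an element of $\Omega(A; \underline{\textrm{End}}(E))$. I expect that after fixing conventions carefully and writing everything on test arguments $(\alpha_1, \ldots, \alpha_p)$, all three expressions collapse to $(-1)^p\partial_E \circ T(-) + (\text{sign}) \, T(-) \circ \partial_E$ with a single consistent sign, and the identity drops out. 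No deeper idea is needed; this is a ``chase the definitions'' lemma, and indeed the paper itself calls it ``simple'' — the proof is essentially the remark that the fibrewise differential on $\underline{\textrm{End}}(E)$ is by construction the graded commutator with $\partial_E$, and that this commutator is compatible with the left and right $\Omega(A;\underline{\textrm{End}}(E))$-module structures on $\Omega(A;E)$.
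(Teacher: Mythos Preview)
Your overall approach is correct and matches what the paper intends: this lemma is stated without proof (the authors simply call it ``simple''), and the argument is indeed to reduce to homogeneous $T\in\Omega^p(A;\underline{\textrm{End}}^k(E))$ and unpack both sides on test arguments $(\alpha_1,\ldots,\alpha_p)$.

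Two small corrections to your details, though. First, the wedge products in $\partial\wedge T$ and $T\wedge\partial$ on the right-hand side are the \emph{composition} wedge (\ref{composition}) on $\Omega(A;\underline{\textrm{End}}(E))$, not the module actions (\ref{left}) and (\ref{right}) on $\Omega(A;E)$ --- both $\partial$ and $T$ live in $\Omega(A;\underline{\textrm{End}}(E))$, so neither (\ref{left}) nor (\ref{right}) even typechecks. Second, in the sign $(-1)^{qi}$ the index $i$ is the internal degree of the \emph{first} factor, so for $\partial\wedge T$ one has $i=1$ (the degree of $\partial$) and $q=p$, giving $(-1)^p$, not $(-1)^{pk}$. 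With these fixes the two sides both come out to
\[
(-1)^p\,\partial_E\circ T(\alpha_1,\ldots,\alpha_p)\;-\;(-1)^{p+k}\,T(\alpha_1,\ldots,\alpha_p)\circ\partial_E,
\]
and the identity follows. Your identification of the core content --- that the induced differential on $\underline{\textrm{End}}(E)$ is the graded commutator with $\partial_E$ --- is exactly right.
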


{\bf Connections.}\rm \  Let $A$ be a Lie algebroid. An
$A$-connection on a graded vector bundle $E$ is just an
$A$-connection on the underlying vector bundle $E$ which preserves
the grading. Equivalently, it is a family of $A$-connections, one on
each $E^n$. If $(E, \partial)$ is a complex over $M$, an
$A$-connection on $(E,
\partial)$ is a graded connection $\nabla$ which is compatible with
$\partial$ (i.e. $\nabla_{\alpha}\partial= \partial
\nabla_{\alpha}$). Note that, in terms of the operators $d_{\nabla}$
and $\partial$ induced on $\Omega(A; E)$, the compatibility of
$\nabla$ and $\partial$ is equivalent to $[d_{\nabla}, \partial]=
0$.

Connections on $E$ and $F$ naturally induce connections on the
associated bundles $S(E)$, $\underline{\textrm{End}}(E)$, $E\otimes
F$, etc. The basic principle is, as before, the graded derivation
rule. For instance, one has
\[ d_{\nabla}(\eta_1\wedge \eta_2)= d_{\nabla}(\eta_1)\wedge \eta_2+ (-1)^{|\eta_1|}\eta_1\wedge d_{\nabla}(\eta_2),\]
for all $\eta_1\in \Omega(A; E)$, $\eta_2\in \Omega(A; F)$. Also,
for $T\in \Omega(A; \underline{\textrm{End}}(E))$, $d_{\nabla}(T)$
is uniquely determined by
\[ d_{\nabla}(T\wedge \eta)= d_{\nabla}(T)\wedge \eta+ (-1)^{|T|}T\wedge d_{\nabla}(\eta),\]
for all $\eta\in \Omega(A; E)$. More explicitly,
\[ d_{\nabla}(T)= [d_{\nabla}, T].\]

\begin{lemma}
If a complex $(E,\partial)$ admits an $A$-connection then, for any
leaf $L\subset M$ of $A$, $E|_{L}$ is regular.
\end{lemma}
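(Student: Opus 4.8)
The statement asserts that if a complex $(E,\partial)$ over $M$ admits an $A$-connection, then its restriction to any leaf $L$ of $A$ is regular, i.e. $\partial$ has constant rank along $L$. The plan is to use the compatibility $\nabla_\alpha \partial = \partial \nabla_\alpha$ to show that parallel transport along $A$-paths (which, restricted to a leaf, are just paths tangent to $L$ since $\rho(A)|_L = TL$) intertwines the fibers $(E_x,\partial_x)$ as cochain complexes. Since parallel transport is an isomorphism, it will carry $\mathrm{Ker}(\partial_x)$ to $\mathrm{Ker}(\partial_y)$ and $\mathrm{Im}(\partial_x)$ to $\mathrm{Im}(\partial_y)$ for any two points $x,y$ on the same leaf, so all the ranks agree.

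First I would fix a leaf $L$ and a point $x_0 \in L$, and recall that the anchor restricts to a surjection $\rho \colon A|_L \to TL$; hence every smooth path $\gamma$ in $L$ lifts (after a choice) to an $A$-path, along which the $A$-connection $\nabla$ defines a parallel transport operator $\tau_\gamma^t \colon E_{\gamma(0)} \to E_{\gamma(t)}$. Each $\tau_\gamma^t$ is a linear isomorphism, and because $\nabla$ preserves the grading and commutes with $\partial$, the operators $\tau_\gamma^t$ are degree-preserving and satisfy $\partial_{\gamma(t)} \circ \tau_\gamma^t = \tau_\gamma^t \circ \partial_{\gamma(0)}$. This last identity is the crux: it follows by differentiating in $t$ and invoking uniqueness for the linear ODE defining parallel transport, exactly as in the classical computation that a flat (or here merely compatible) connection transports kernels to kernels. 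Concretely, both $t \mapsto \partial_{\gamma(t)} \tau_\gamma^t(s)$ and $t \mapsto \tau_\gamma^t(\partial_{x_0} s)$ solve the same parallel-transport ODE with the same initial value $\partial_{x_0}(s)$, using $\nabla_\alpha \partial = \partial \nabla_\alpha$.

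Next I would conclude: for any $y \in L$, choose a path $\gamma$ in $L$ from $x_0$ to $y$; then $\tau_\gamma$ restricts to isomorphisms $\mathrm{Ker}(\partial_{x_0}) \xrightarrow{\sim} \mathrm{Ker}(\partial_y)$ and $\mathrm{Im}(\partial_{x_0}) \xrightarrow{\sim} \mathrm{Im}(\partial_y)$, so $\dim H^k(E_y,\partial_y) = \dim H^k(E_{x_0},\partial_{x_0})$ for all $k$, and likewise $\mathrm{rank}(\partial_y) = \mathrm{rank}(\partial_{x_0})$. Thus $\partial$ has locally constant, hence constant, rank along the connected leaf $L$, which is exactly the definition of $E|_L$ being regular. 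I expect the main technical point to be the careful verification that parallel transport is a chain map — that is, setting up the parallel-transport ODE for the graded bundle and checking that the compatibility of $\nabla$ with $\partial$ passes to the transport maps via an ODE-uniqueness argument; everything afterward is formal. One should also note smoothness of the lift $\gamma \mapsto$ $A$-path is unproblematic since $A|_L \to TL$ is a surjective bundle map and admits a smooth section.
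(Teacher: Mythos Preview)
Your proposal is correct and follows essentially the same approach as the paper: both argue that the compatibility $\nabla_\alpha\partial=\partial\nabla_\alpha$ forces parallel transport along $A$-paths to be a chain map, whence the pointwise cohomologies (equivalently, the ranks of $\partial$) are constant along each leaf. The paper's proof is terser---it first states the case $A=TM$ and then defers the general case to parallel transport along $A$-paths as in \cite{CF1}---while you spell out the lifting of paths in $L$ to $A$-paths and the ODE-uniqueness argument, but the substance is the same.
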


\begin{proof}
When $A= TM$ there is only one leaf $L= M$, and we have to prove
that $E$ is regular. Since $\nabla$ is compatible with $\partial$,
it follows that the parallel transport with respect to $\nabla$
commutes with $\partial$ and therefore induces isomorphisms between
the point-wise cohomologies. The same argument applied to parallel
transport along $A$-paths, as explained in \cite{CF1}, implies the
general case.
\end{proof}


\begin{thebibliography}{10}

\bibitem{Alv}
Jes{\'u}s~A. Alvarez~L{\'o}pez.
\newblock A decomposition theorem for the spectral sequence of {L}ie
  foliations.
\newblock {\em Trans. Amer. Math. Soc.}, 329(1):173--184, 1992.

\bibitem{AC3}
Camilo~Arias~Abad and Marius~Crainic.
\newblock Representations up to homotopy of groupoids and the {B}ott spectral
  sequence.
\newblock  arXiv:0911.2859v1, submitted for publication.


\bibitem{AC2}
Camilo~Arias~Abad and Marius~Crainic.
\newblock The {W}eil algebra and {V}an {E}st isomorphisms.
\newblock arxiv:math/0901.0322, to appear in Annales de l'Institut Fourier.





\bibitem{Blaom}
Anthony~Blaom.
\newblock Lie algebroids and {C}artan's method of equivalence.
\newblock arxiv math/0509071.

\bibitem{Bott0}
Raoul Bott.
\newblock Lectures on characteristic classes and foliations.
\newblock In {\em Lectures on algebraic and differential topology (Second Latin
  American School in Math., Mexico City, 1971)}, pages 1--94. Lecture Notes in
  Math., Vol. 279. Springer, Berlin, 1972.
\newblock Notes by Lawrence Conlon, with two appendices by J. Stasheff.


\bibitem{Bott}
Raoul~Bott.
\newblock On the {C}hern-{W}eil homomorphism and the continuous cohomology of
  {L}ie-groups.
\newblock {\em Advances in Math.}, 11:289--303, 1973.



\bibitem{BCW}
Henrique Bursztyn, Marius Crainic, Alan Weinstein, and Chenchang
Zhu.
\newblock Integration of twisted {D}irac brackets.
\newblock {\em Duke Math. J.}, 123(3):549--607, 2004.

\bibitem{Cartan}
Henri Cartan.
\newblock Notions d'alg\`ebre diff\'erentielle; application aux groupes de
  {L}ie et aux vari\'et\'es o\`u op\`ere un groupe de {L}ie.
\newblock In {\em Colloque de topologie (espaces fibr\'es), Bruxelles, 1950},
  pages 15--27. Georges Thone, Li\`ege, 1951.

\bibitem{Chen}
Kuo Tsai Chen.
\newblock Iterated path integrals.
\newblock  {\em Bull. Amer. Math. Soc.}, 83 (1997), 831-879.
 

\bibitem{Cra1}
Marius~Crainic.
\newblock On the perturbation lemma and deformations.
\newblock Arxiv math/0403266.

\bibitem{CF1}
Marius Crainic and Rui~Loja Fernandes.
\newblock Integrability of {L}ie brackets.
\newblock {\em Ann. of Math. (2)}, 157(2):575--620, 2003.



\bibitem{CF2}
Marius~Crainic and Rui~Loja~Fernandes.
\newblock Secondary characteristic classes of {L}ie algebroids.
\newblock In {\em Quantum field theory and noncommutative geometry}, volume 662
  of {\em Lecture Notes in Phys.}, pages 157--176. Springer, Berlin, 2005.


\bibitem{CF3}
Marius~Crainic and Rui~Loja~Fernandes.
\newblock Stability of symplectic leaves.
\newblock {\em Invent. Math. 180}, no. 3, 481-533, 2010.


\bibitem{CM}
Marius~Crainic and Ieke~Moerdijk.
\newblock Deformations of {L}ie brackets: cohomological aspects.
\newblock {\em J. Eur. Math. Soc.}, 10:1037--1059, 2008.

\bibitem{ELW}
Sam Evens, Jiang-Hua Lu, and Alan Weinstein.
\newblock Transverse measures, the modular class and a cohomology pairing for
  {L}ie algebroids.
\newblock {\em Quart. J. Math. Oxford Ser. (2)}, 50(200):417--436, 1999.

\bibitem{Get}
Ezra Getzler.
\newblock The equivariant {C}hern character for non-compact {L}ie groups.
\newblock {\em Adv. Math.}, 109(1):88--107, 1994.

\bibitem{GLS}
Victor K.A.M.~Gugenheim, Larry~Lambe, and James Stasheff.
\newblock Perturbation theory in differential homological algebra II.
\newblock {\em Illinois J. of Math}, 35:357--373, 1991.

\bibitem{Hei}
James~L. Heitsch.
\newblock A cohomology for foliated manifolds.
\newblock {\em Comment. Math. Helv.}, 50:197--218, 1975.

\bibitem{HMS}
Dale Husemoller, John~C. Moore, and James Stasheff.
\newblock Differential homological algebra and homogeneous spaces.
\newblock {\em J. Pure Appl. Algebra}, 5:113--185, 1974.

\bibitem{IG}
David~Iglesias-Ponte, C.~L. Gengoux, and P.~Xu.
\newblock Universal lifting theorem and quasi-{P}oisson groupoids.
\newblock arxiv:math/0507396.

\bibitem{Igusa}
\newblock Kiyoshi Igusa.
\newblock Iterated integrals of superconnections.
\newblock arxiv:0912.0249.


\bibitem{Kalk2}
Jaap Kalkman.
\newblock B{RST} model for equivariant cohomology and representatives for the
  equivariant {T}hom class.
\newblock {\em Comm. Math. Phys.}, 153(3):447--463, 1993.

\bibitem{KT}
Franz~W. Kamber and Philippe Tondeur.
\newblock Foliations and metrics.
\newblock In {\em Differential geometry (College Park, Md., 1981/1982)},
  volume~32 of {\em Progr. Math.}, pages 103--152. Birkh\"auser Boston, Boston,
  MA, 1983.


\bibitem{MY}
Yvette~Kosmann-Schwarzbach and Kirill~C.~H. Mackenzie.
\newblock Differential operators and actions of {L}ie algebroids.
\newblock In {\em Quantization, Poisson brackets and beyond (Manchester,
  2001)}, volume 315 of {\em Contemp. Math.}, pages 213--233. Amer. Math. Soc.,
  Providence, RI, 2002.

\bibitem{KS}
Alekei Kotov and Thomas Strobl.
\newblock Characteristic classes associated to Q-bundles.
\newblock arxiv:0711.4106v1.

\bibitem{McK}
Kirill~Mackenzie.
\newblock {\em Lie groupoids and {L}ie algebroids in differential geometry},
  volume 124 of {\em London Mathematical Society Lecture Note Series}.
\newblock Cambridge University Press, Cambridge, 1987.

\bibitem{MH}
Kirill~Mackenzie and Paul~Higgins.
\newblock Algebraic constructions in the category of {L}ie algebroids.
\newblock {\em J. Algebra}, 129:194--230, 1990.

\bibitem{Mehta}
Rajan~Mehta.
\newblock Q-algebroids and their cohomology.
\newblock {\em J. Symplectic Geom. 7}, no. 3, 263-293, 2009.

\bibitem{Q}
Daniel Quillen.
\newblock Superconnections and the {C}hern character.
\newblock {\em Topology}, 24(1):89--95, 1985.

\bibitem{Sch}
Stefan Schwede.
\newblock Morita theory in abelian, derived and stable model categories.
\newblock In {\em Structured ring spectra}, volume 315 of {\em London Math.
  Soc. Lecture Note Ser.}, pages 33--86. Cambridge Univ. Press, Cambridge,
  2004.

\bibitem{Sta}
Jim Stasheff.
\newblock Differential graded {L}ie algebras, quasi-{H}opf algebras and higher
  homotopy algebras.
\newblock In {\em Quantum groups (Leningrad, 1990)}, volume 1510 of {\em
  Lecture Notes in Math.}, pages 120--137. Springer, Berlin, 1992.

\bibitem{Weibel}
Charles~A. Weibel.
\newblock {\em An introduction to homological algebra}, volume~38 of {\em
  Cambridge Studies in Advanced Mathematics}.
\newblock Cambridge University Press, Cambridge, 1994.

\bibitem{Wein2}
 Alan.~Weinstein.
 \newblock Symplectic groupoids and Poisson manifolds,
\newblock \emph{Bull. Amer. Math. Soc.}, \textbf{16}  (1987), 101--104.
\end{thebibliography}
\end{document}